\newtheorem{theorem}{Theorem}[section] 
\newtheorem{proposition}[theorem]{Proposition}
\newtheorem{lemma}[theorem]{Lemma}
\newtheorem{corollary}[theorem]{Corollary}
\newtheorem{thmA}{Theorem}
\theoremstyle{definition}
\newtheorem{definition}[theorem]{Definition}
\newtheorem{example}[theorem]{Example}
\newtheorem{remark}[theorem]{Remark} 
\newcommand{\dual}{\mathcal{D}}
\newcommand{\primal}{\mathcal{P}}
\newcommand{\A}{\mathbb{A}}
\newcommand{\C}{\mathbb{C}}
\newcommand{\HH}{\mathbb{H}}
\newcommand{\N}{\mathbb{N}}
\newcommand{\PP}{\mathbb{P}}
\newcommand{\Q}{\mathbb{Q}}
\newcommand{\R}{\mathbb{R}}
\newcommand{\Z}{\mathbb{Z}}
\newcommand{\cD}{\mathcal{D}}
\newcommand{\cL}{\mathcal{L}}
\newcommand{\csM}{\mathscr{M}}
\newcommand{\cM}{\mathcal{M}}
\newcommand{\cO}{\mathcal{O}}
\newcommand{\csP}{\mathscr{P}}
\newcommand{\cP}{\mathcal{P}}
\newcommand{\DR}{B_R}
\newcommand{\dd}{\mathrm{d}}
\newcommand{\musw}{\mu^{\mathrm{sw}}}
\newcommand{\loc}{\mathrm{loc}}
\newcommand{\weak}{\mathrm{weak}}
\newcommand{\FS}{\mathrm{FS}}
\newcommand{\transpose}{\mathrm{T}}
\newcommand{\op}{\mathrm{op}}
\newcommand{\hyp}{\mathrm{hyp}}
\newcommand{\Pet}{\mathrm{Pet}}
\newcommand{\Ht}{h}
\newcommand{\ginf}{\mathrm{g}}
\DeclareMathOperator{\capac}{cap}
\DeclareMathOperator{\Img}{Im}
\DeclareMathOperator{\ess}{ess}
\DeclareMathOperator{\supp}{supp}
\DeclareMathOperator{\essh}{\ess(\Ht_\ginf)}
\DeclareMathOperator{\Res}{Res}
\DeclareMathOperator{\rH}{H}
\begin{document}
\title[Closing the gap around the essential minimum]{Closing the gap
  around the essential minimum of  height functions with linear
  programming} 
\author[Burgos Gil]{Jos{\'e} Ignacio Burgos Gil}
\address{Jos{\'e} Ignacio Burgos Gil:
Instituto de Ciencias Matem\'aticas (CSIC-UAM-UCM-UC3M), 
Calle Nicol\'as Cabrera 15, Campus UAM, Cantoblanco, 28049 Madrid, Spain}
\email{burgos@icmat.es}

\author[Menares]{Ricardo Menares}
\address{Ricardo Menares: Pontificia Universidad Católica de Chile, Facultad de Matemáticas, Vicuña Mackenna 4860, Santiago, Chile}
\email{rmenares.v@gmail.com}

\author[Qu]{Binggang Qu}
\address{Binggang Qu: Instituto de Ciencias Matem\'aticas (CSIC-UAM-UCM-UC3M), 
Calle Nicol\'as Cabrera 15, Campus UAM, Cantoblanco, 28049 Madrid, Spain}
\email{binggang.qu@icmat.es}

\author[Sombra]{Mart{\'\i}n~Sombra}
  \address{Mart\'in Sombra: 
Instituci\'o Catalana de Recerca
  i Estudis Avan\c{c}ats, Passeig Llu{\'\i}s Companys~23,
  08010 Barcelona, Spain;\newline
 \indent Departament de Matem\`atiques i
  Inform\`atica, Universitat de Barcelona, Gran Via 585, 08007
  Bar\-ce\-lo\-na, Spain;\newline
\indent Centre de Recerca Matemàtica, Edifici C, Campus Bellaterra, 08193 Bellaterra, Spain}
\email{martin.sombra@icrea.cat}

\date{\today} \subjclass[2020]{Primary: 11G50; Secondary: 14G40,
  30C85, 49N15, 11Y16}
\keywords{Height function, essential minimum, potential of a measure, strong duality,
  computable real number}
\thanks{Burgos Gil was partially supported by grants
  PID2022-142024NB-I00 and \mbox{CEX2023-001347-S} funded by
  MICIU/AEI/10.13039/501100011033. Menares was partially supported
  by ANID FONDECYT Regular grant 1250734. Qu was partially supported by
  grant PID2022-142024NB-I00 funded by
  MICIU/AEI/10.13039/501100011033.  Sombra was partially supported
  by  grants PID2023-147642NB-I00
  and CEX2020-001084-M funded by MICIU/AEI/10.13039/501100011033}

\begin{abstract}
For many common height functions, it is notoriously hard to compute
the essential minimum. Nevertheless there are two classical methods,
one giving lower bounds and the other giving upper bounds. In this
paper, we show that the two methods are actually dual to each other in
the sense of linear programming. The main theorem is that they satisfy strong
duality, which closes the gap around the essential minimum from both
ends. As applications we prove that this essential minimum can
  be realized by a generic sequence of algebraic integers, and that if
  the associated Green function is computable then this essential
  minimum is a computable real number.
%
\end{abstract}

\maketitle

\setcounter{tocdepth}{1}
\tableofcontents


\section{Introduction}

\subsection{The essential minimum of height functions}
The height of an algebraic point in a quasi-projective variety is a
measure of its arithmetic complexity, which makes it an important tool
in the study of Diophantine equations. In particular it is crucial in
the proof of results like the Mordell--Weil theorem and the
Mordell--Faltings theorem, and appears in 
far-reaching conjectural statements such as the effective Mordell conjecture and
Vojta's conjecture, see for instance  \cite{BombieriGubler}.

Among the different properties of heights, the equidistribution of
small points is both of intrinsic interest and helpful in problems
about unlikely intersections such as the Manin--Mumford and the
Bogomolov conjectures. To formulate it, let $X/\mathbb{Q}$ be a
quasi-projective variety and
$h \colon X(\overline{\Q}) \longrightarrow \R$ a height function on
its set of algebraic points. The \emph{essential minimum} of $h$ is
the quantity defined as
\begin{displaymath}
  \ess(h) \coloneqq \inf \left\{ \liminf_{n \rightarrow \infty} h(\alpha_n) \, \middle| \, (\alpha_n)  \text{ is a generic sequence in }
    X(\overline{\Q}) \right\} \in \mathbb{R}\cup\{-\infty\},
\end{displaymath}
where a sequence of algebraic points of $X$ is called \emph{generic}
if it eventually escapes every proper Zariski closed subset. A generic
sequence $(x_n)$ in $X(\overline{\Q})$ is called \emph{small} if
$\lim_{n\to \infty}h(x_n) = \ess(h)$, namely if the height of these
points converges to the smallest possible value.  Then the
equidistribution of small points is the property that the Galois orbit
of the points in any small generic sequence converge towards a
prescribed measure.

The central result in this direction is Yuan's equidistribution
theorem \cite{YuanBig}, which most notably applies to the canonical
heights associated to dynamical systems, and in particular to the
canonical heights on toric varieties and the N\'eron--Tate heights on
abelian varieties. Beyond Yuan’s theorem, other cases that are
understood include toric heights on toric varieties~\cite{BPRStoric}
and canonical heights on semiabelian varieties \cite{Kuhne}.

Recently Balla\"y and the fourth author obtained a more general
equidistribution theorem unifying all these previous
results~\cite{BallaySombra}. However its implementation in any other
setting is challenging since in the first place it assumes the knowledge of
the essential minimum, which is a very
difficult problem.

For example consider the deceptively simple
\emph{Zhang--Zagier height}
$ h_{\mathrm{ZZ}} \colon \mathbb{A}^{1} (\overline{\Q})=
\overline{\Q} \longrightarrow \R $ defined by
\begin{displaymath}
h_{\mathrm{ZZ}} (\alpha)=  h_{\mathrm{W}}(\alpha) + h_{\mathrm{W}}(1-\alpha) \quad \text{ for } \alpha \in \overline{\mathbb{Q}},
\end{displaymath}
where $h_{\mathrm{W}}$ denotes the Weil height on $\overline{\Q}$.  Both lower and upper
bounds are known for its essential minimum
\cite{Zagier01, Doche1, Doche2} but the problem of
computing this quantity or even approximating it up to three
significant digits remains open.

\subsection{Height functions on $\overline{\Q}$}
In this paper we will study height functions on $\A^{1}(\overline{\Q})=\overline{\Q}$ whose
archimedean part is governed  by an arbitrary Green function. Precisely, a \emph{Green
  function} is a continuous function
$\ginf\colon \C \longrightarrow \R $ that is invariant under complex
conjugation and obeys the asymptotics
\begin{equation}\label{general asymptotic}
\ginf(z)=\log|z|+o(\log|z|) \quad \text{ as } |z|\longrightarrow \infty.
\end{equation}
The associated height function
$h_{\ginf} \colon \overline{\Q} \longrightarrow \R$ is then defined
for $ \alpha\in \overline{\Q}$ as
\begin{equation}\label{def height}
	h_{\ginf}(\alpha) = \frac{1}{\deg(\alpha)} \left( \log|c_{\alpha}| + \sum_{\beta \in O(\alpha)} \ginf(\beta) \right), 
\end{equation} 
where $\deg(\alpha) $ denotes the degree of this algebraic number,
$c_{\alpha}$ the leading coefficient of its minimal polynomial
$P_{\alpha}\in \mathbb{Z}[x]$ and $O(\alpha) \subseteq \mathbb{C}$ its
\emph{Galois orbit}, that in this situation might be defined as the
set of complex zeros of $P_{\alpha}$.  For instance, the choice
$\ginf(z) = \log^+|z|= \log \max(1,|z|)$ gives the Weil height on
$\overline{\mathbb{Q}}$.

 In the language of Arakelov geometry, these are the height functions
 corresponding to adelic metrics on the line bundle $\cO(1)$ on
 $\PP^1$ that for the archimedean place have mild singularities at the
 point at infinity whereas for the non-archimedean places are
 canonical, see Section \ref{Arakelov_theory_on_P^1} for details.


 There are two classical methods for bounding the essential minimum of
 $h_\ginf$, one giving lower bounds and the other giving upper bounds.
 In this paper we show that both methods are actually dual to each
 other in the sense of linear programming.  Our main result (Theorem
 \ref{thmA:duality} below) shows that the strong duality property
 holds in this setting, closing the gap around the essential minimum
 from both ends. A surprising consequence is that for every such
 height function, the essential minimum can always be attained by a generic
 sequence of algebraic integers.

 \begin{thmA} \label{integral} Let
   $\ginf \colon \C \longrightarrow \R $ be a Green function. Then
   there exists a sequence $ (\alpha_n)$ of distinct algebraic
   integers such that $h_{\ginf}(\alpha_n)$ is monotonically
   decreasing and $\lim_{n} h_{\ginf}(\alpha_n) = \ess(h_{\ginf})$.
\end{thmA}

In fact we obtain that the set of height values
$\Ht_\ginf(\overline{\Z})$ is dense in the interval $[\essh,\infty)$
(Corollary~\ref{spectrum}), which in particular gives Theorem
\ref{integral}.

The Faltings height
$\Ht_{\operatorname{F}}\colon \overline{\Q}\longrightarrow \R$ is defined as the stable Faltings
height of the semistable elliptic curve with $j$-invariant equal to $\alpha$, for $\alpha\in \overline{\mathbb{Q}}$.  As
a specialization of the previous result, we obtain that the essential
minimum of Faltings height can be attained by a generic sequence of
$j$-invariants of different elliptic curves having good reduction
everywhere (Theorem \ref{Faltings_height_4}).

\subsection{Lower and upper
  bounds} \label{intro:subsection_bounds} Let
$P_{1},\dots ,P_{k}\in \Z[x]$ be nonzero polynomials with integer coefficients
and $a_{1},\dots, a_{k} \in \R_{\geq 0}$ non negative real numbers.
For every $\alpha\in \overline \Q$ such that $P_{i}(\alpha)\ne 0$ for all $i$ we have
\begin{displaymath}
  h_{\ginf}(\alpha) \geq \inf_{z \in \C} \left( \ginf(z)-\sum_{i=1}^{k}a_{i}\log \big| P_{i}(z) \big| \right).
\end{displaymath}
This inequality follows from the definition of the height and the
product formula on number fields and is at
the basis of Smyth's method \cite{Smyth:mtraiII}. It readily implies
\begin{displaymath}
\ess(h_\ginf)\geq \inf_{z \in \C} \left( \ginf(z)-\sum_{i=1}^{k}a_{i}\log \big| P_{i}(z) \big| \right)
\end{displaymath}
and so  the best lower bound   that one can obtain in this way is
\begin{displaymath}
\ess(h_\ginf) \geq  \sup \left\{ \inf_{z \in \C} \left( \ginf(z)-\sum_{i=1}^{k}a_{i}\log \big| P_{i}(z) \big| \right) \,\middle|\, k \in \mathbb{Z}_{\ge 0}, a_i\in \R_{\geq 0}, P_i\in \Z[x]\setminus \{0\}\right\}.  
\end{displaymath}

To describe the upper bounds, we denote by $\mathscr{P}_{\log}(\C )$ the space of
probability measures $\mu$ on $\C$~such~that
\begin{displaymath}
 \int\log^{+}|x| \, \dd \mu < \infty .
\end{displaymath}
A function $f\colon \C \longrightarrow \nobreak \R $ has
\emph{logarithmic growth} if there exist constants $A,B >0$ such that
$|f(z)| \leq A+B \log^+|z|$ for all $z \in \C$. We say that a
sequence  $(\mu_n)$ in~$ \mathscr{P}_{\log}(\C )$
converges \emph{$\log$-weakly} to $\mu\in \mathscr{P}_{\log}(\C )$ if
for every continuous function $f$ with logarithmic growth we have
\[
\lim_{n \rightarrow \infty} \int f \,\dd\mu_n = \int f \,\dd\mu.
\]
Then we denote by $\mathscr{P}^{\overline{\Z }}_{\log}(\C )$ the subset
of $\mathscr{P}_{\log}(\C )$ consisting of the limit points of
$\{ \delta_{O(\alpha)} \mid \alpha \in \overline{\Z}\}$ in the
$\log$-weak topology, where $\delta_{O(\alpha)}$ denotes the the
uniform probability measure on the Galois orbit of the algebraic
integer $\alpha$.

The basic observation is that for
$\mu \in \mathscr{P}^{\overline{\Z}}_{\log}(\C)$ and a sequence of
distinct algebraic integers $(\alpha_n)$ such that
$\delta_{O(\alpha_n)}$ converges $\log$-weakly to $\mu$ we have
\begin{equation}\label{eq. spectrum}
 \lim_{n\rightarrow \infty}h_{\ginf} (\alpha_n) = \int \ginf \,\dd\mu,
\end{equation}
because the height of an algebraic integer reduces to the average of
the Green function over its Galois orbit and the fact that Green
functions have logarithmic growth. Hence for every $\mu \in
\mathscr{P}^{\overline{\Z}}_{\log}(\C)$ we have 
\begin{displaymath}
  \ess(h_\ginf)\le \int \ginf \, \dd\mu.
\end{displaymath}
So, the best upper bound we can obtain this way is
\begin{displaymath}
  \ess(h_\ginf) \leq \inf \left\{ \int \ginf \, \dd\mu\,\middle|\, \mu \in \mathscr{P}^{\overline
    \Z}_{\log}(\C)\right\}.
\end{displaymath}
This is the principle applied by the first and second authors together
with Rivera-Letelier in their study of the Faltings height
\cite{BurgosGil_essMinFaltings}. To take advantage of this upper bound for the essential minimum it is
important to understand the probability measures that can be
arbitrarily approached by Galois orbits of algebraic integers.

\subsection{Approximation of measures by algebraic integers}
We now describe different ways to approximate measures by integers. 
 To this
end we first introduce some definitions and recall the previous
results in this direction.

We denote by $\mathscr{P}_{c}(\C)$ the space of probability measures
on $\C$ with compact support, and say that a sequence $(\mu_n)$ in
$\mathscr{P}_{c}(\C)$ \emph{converges properly} to
$\mu \in \mathscr{P}_{c}(\C)$ if it converges weakly  and there exists
a compact subset $K \subseteq \mathbb{C}$ 
containing the support of $\mu_{n}$ for all $n$.  Then we denote by
$\mathscr{P}^{\overline{\Z}}_{c}(\C) \subseteq \mathscr{P}_{c}(\C) $
the subset consisting of the limit points of
$\{ \delta_{O(\alpha)} \mid \alpha \in \overline{\Z} \}$ with respect
to the topology of the proper convergence.

Many measures in $\mathscr{P}^{\overline{\Z}}_{c}(\C)$ can be
constructed using potential theory. Let $\mu_K$ be the equilibrium
measure of a compact subset $K\subseteq \C$ that is invariant under
the complex conjugation and has capacity
${\operatorname{cap}(K)=1}$. Combining the Fekete--Szeg\H{o}
theorem~\cite{Fekete-Szego} with a result of Rumely \cite{Rumely} it
can be shown that $\mu_K \in \mathscr{P}^{\overline{\Z}}_{c}(\C)$
(Remark \ref{Rumely_equidistribution}).

A recent groundbreaking result by Smith \cite[Theorem 1.5]{Smith} and
by Orloski and Sardari~\cite[Theorem 1.2]{Orloski-Sardari} gives the
next elegant characterization: for $\mu \in \mathscr{P}_c(\C )$ we
have that $\mu \in \mathscr{P}^{\overline{\Z}}_c(\C )$ if and only if
$\mu$ is invariant under the complex conjugation and
\begin{equation}
  \label{eq:18}
\int \log|Q| \,\dd\mu \geq 0 \quad  \textrm{ for all } Q \in \Z [x]\setminus\{0\}.
\end{equation}
In particular $\mathscr{P}^{\overline{\Z}}_c(\C )$ is a convex subset
of $\mathscr{P}_c(\C )$. 

Actually both the Fekete--Szeg\H{o}--Rumely
result and the~Smith--Orloski--Sardari theorem are stronger (Remark
\ref{SOS-FS}) but the current versions suffice for our purposes.



The use of log-weak convergence allows us to extend the~Smith--Orloski--Sardari theorem
to probability measures whose support is not necessarily compact.
\begin{thmA}[Theorem \ref{SOS non compact}] \label{SOS non compact
    introduction}
  Let $\mu \in \mathscr{P}_{\log}(\C )$. Then
  $\mu \in \mathscr{P}^{\overline{\Z}}_{\log}(\C )$ if and only if
  $\mu$ is invariant under the complex conjugation and
        \begin{equation*}
        \int \log|Q| \,\dd\mu \geq
                  0 \quad  \textrm{ for all } Q \in \Z [x]\setminus \{0\}.
\end{equation*}
\end{thmA}


Even though this
characterization may seem daunting because it involves infinitely many
inequalities, it allows us to obtain structural information on $\mathscr{P}^{\overline{\Z }}_{\log}(\C )$. For instance, Theorem \ref{SOS non compact
    introduction} readily implies that this is a convex set. Also, we can use it to construct a countable dense subset that allows a
complementary approach to  $\mathscr{P}^{\overline{\Z }}_{\log}(\C )$. 
To this end, for each pair
$P, Q\in \Z[x]$ of different monic irreducible polynomials we denote
by $\mu_{P,Q}$ the 
normalized pullback of the Haar measure on the unit
circle under the rational map
\begin{displaymath}
\frac{P^{\deg (Q) + 1}}{Q^{\deg (P)}}\colon \mathbb{P}^1 \longrightarrow
\mathbb{P}^1.  
\end{displaymath}
Its support is the lemniscate
$\{ z\in \C \, | \, |P(z)|^{\deg (Q)+1}=|Q(z)|^{\deg (P)}\}$,
which is compact because the above rational function has a pole  at infinity. We show that $\mu_{P,Q}$ lies in
$\mathscr{P}^{\overline \Z}_{\log}(\C)$ and that the set of all such
measures is dense in $\mathscr{P}^{\overline \Z}_{\log}(\C)$ with
respect to the $\log$-weak topology (Theorem
\ref{mu_PQ_construction}).


\subsection{Potential theory}
\label{sec:potential-theory}
Potential theory plays an important role in the previous results.  Recall that
the \emph{potential} of a probability measure
$\mu\in \mathscr{P}_{\log}(\C)$ is the function
$U^{\mu}\colon \mathbb{C}\longrightarrow \mathbb{R}\cup \{\infty\}$
defined as
\begin{displaymath}
	U^{\mu} (z)= \int \log\frac{1}{|z-w|}\, \dd\mu(w).
\end{displaymath}
As a matter of fact, $\mathscr{P}_{\log}(\C)$ is the largest space of
probability measures with a well-defined
potential~\cite{BLW}. Furthermore a sequence $(\mu_{n})$ in
$\mathscr{P}_{\log}(\C)$ converges to $ \mu$ $\log$-weakly if and only
if the corresponding sequence of potential functions $ (U^{\mu_n})$
converges to $ U^\mu$ as distributions (Lemma \ref{superharmonic potential}, Lemma 
\ref{Hormander_psh_magic} and Proposition
\ref{log_convergence_IFF_potential_L^1_loc}). Hence
arguably the $\log$-weak convergence is the right topology on
$\mathscr{P}_{\log}(\C)$ to do potential theory.

We can characterize the $\log$-weak closure of
the set of equilibrium measures of compact sets with capacity one as
the set of measures with negative potential.

\begin{thmA}[Theorem \ref{negativo}] \label{negativo introduction} Let
  $\mu \in \mathscr{P}_{\log}(\C )$. Then $\mu$ is the $\log$-weak
  limit of a sequence of equilibrium measures $(\mu_{K_n})$ of
  compact subsets $K_n \subseteq \mathbb{C}$ with
  $\operatorname{cap}(K_{n})=1$ if and only if~$ U^{\mu}(z) \leq 0$
  for all $z\in \mathbb{C}$.  In particular, if $\mu$ is invariant
  under the complex conjugation and~$U^{\mu}(z) \le 0$ for all
  $z \in \mathbb{C}$ then
  $\mu\in \mathscr{P}^{\overline{\Z }}_{\log}(\C )$.
\end{thmA}


Using this result it is easy to exhibit measures in 
$ \mathscr{P}^{\overline{\Z }}_{\log}(\C )$ that are not compactly
supported. A first example is the probability measure $\mu_{\FS}$
induced by the Fubini--Study form
\[
\omega_{\FS} = \frac{i\, \dd z  \wedge \dd \bar{z}}{2 \pi\, (|z|^2+1)^2},
\]
whose potential is the function
$U^{\mu_{\FS}}(z)= (-1/2) \log(1+|z|^2)\le 0$.

As a byproduct, this also allows to check that the set of equilibrium
measures of compact sets of capacity one are not dense in
$\mathscr{P}^{\overline \Z}_{\log}(\C)$. Indeed let $P_0=x-1$ and
$Q_0=x^2-x+1$. Applying Proposition \ref{prop:6} we find
\begin{equation}\label{exampleintro}
 U^{\mu_{P_0,Q_0}}\Big(\frac12\Big)=\frac13 \log\Big(\frac43\Big) >0 ,
\end{equation}
and so by Theorem \ref{negativo introduction} this measure cannot be
$\log$-weakly approximated by equilibrium measures of compact subsets
of capacity one.

\subsection{The essential minimum as a linear programming problem}
Let $\ginf\colon \mathbb{C}\longrightarrow \mathbb{R}$ be a Green function and set
\begin{align*}
  \dual(\ginf) & = \sup \left\{ \inf_{z \in \C} \left( \ginf(z)-\sum_{i=1}^{k}a_{i}\log \big| P_{i}(z) \big| \right) \,\middle|\, k \in \mathbb{Z}_{\ge 0}, a_i\in \R_{\geq 0}, P_i\in \Z[x]\setminus\{0\}\right\} ,\\
  \primal(\ginf) & = \inf \left\{ \int \ginf \, \dd\mu \,\middle|\, \mu\in \mathscr{P}_{\log}(\mathbb{C}) \text{ conjugation invariant and }
                   \int \log|Q| \,\mathrm{d}\mu \geq 0 \text{ for all $Q \in \Z[x]\setminus\{0\}$} \right\}.
\end{align*}
Combining the lower and upper bounds in
Section~\ref{intro:subsection_bounds} and the characterization of
$\mathscr{P}^{\overline{\Z }}_{\log}(\C)$ in Theorem \ref{SOS non
  compact introduction} we obtain
\begin{equation}\label{eq:19}
\dual(\ginf) \leq \ess(h_\ginf) \le \primal(\ginf).
\end{equation}
A key observation is that:
\begin{enumerate}
\item $\primal(\ginf)$ is a linear programming
  problem, in the sense that the objective function and the
  constraints are all linear functionals on $\mu$,
	\item $\dual(\ginf)$ is the dual problem of $\primal(\ginf)$ in the sense of linear programming.
\end{enumerate}
These problems are akin to those considered by Smyth
\cite{Smyth:tpaist}, Smith \cite{Smith} and Orloski, Sardari and Smith
\cite{Smith-Orloski-Sardari} in their works on totally real algebraic
integers of small trace, which were indeed a major source of
inspiration for us.

The previous observation can be made rigorous (Appendix
\ref{sec:dual-line-optim}), which gives the weak duality property
$\dual(\ginf) \leq \primal(\ginf)$ in accordance to \eqref{eq:19}.
Our main result is the following strong duality theorem showing the
equality between these terms.

\begin{thmA}[Theorem \ref{strong_duality_essential_minimum}] \label{thmA:duality}
	We have $\dual(\ginf) = \ess(h_\ginf) = \primal(\ginf)$.
\end{thmA}


Assuming $\ginf$ is subharmonic and $\ginf(z)=\log|z|+a+o(1)$ as
$z\longrightarrow \infty$, the equality $\dual(\ginf)=\ess(\ginf)$ is
a particular case of a theorem of Balla\"y \cite[Theorem
1.2]{Ballay}, see Remark \ref{rem:5}.

\subsection{The computability of the essential minimum}
\label{sec:comp-essent-minim-1}

Our closing point concerns the computability of the essential
minimum. To place this problem into context, we first recall the known
approximations for the specific heights we mentioned before.

In \cite{Zagier01} Zagier obtained a lower bound for the
essential minimum of the Zhang--Zagier height which  was later improved by
Doche \cite{Doche1, Doche2}. Doche  also obtained an upper bound, giving 
\begin{equation*} 
0.248247  \leq \ess(\Ht_{\mathrm{ZZ}})\leq  0.254437.
\end{equation*}
In \cite{Lobrich} L\"obrich obtained a lower bound for the essential
minimum of the Faltings height, and both lower and upper bounds
were obtained in \cite{BurgosGil_essMinFaltings}. This gave
\begin{equation*} 
-0.748629 \leq  \ess(\Ht_{\mathrm{F}})\leq  -0.748622.
\end{equation*}

The above are the best known bounds for these essential minima, and
actually no practical algorithm is known to approximate them up to any
arbitrary precision.  Indeed, after a few iterations the methods that
produce these bounds reach a point where it is unclear how to
continue, due to the enormous size of the search space and the lack of
an efficient criterion to find the optimal direction, see for instance
the discussions in \cite[Section 3]{Zagier01} and \cite[Section
5]{Doche1} for the Zhang--Zagier height and
\cite[Section~8]{BurgosGil_essMinFaltings} for the Faltings height.

As an application of our results we show that these methods do
converge towards these essential minima.  Indeed, for each of these
heights we can reduce both $\dual(\ginf)$ and $\primal(\ginf)$ to
countable subsets (Propositions \ref{prop:5} and Theorem
\ref{mu_PQ_construction}) allowing to set up a theoretical algorithm
that produces  sequences of lower and upper bounds  converging to the same limit.  When these
bounds are closer than the given precision, it has found the required
approximation and~stops.

Loosely speaking, a real number $\lambda$ is \emph{computable} if
there exists an algorithm that is able to compute a rational
approximation of $\lambda$ up to any given precision. We also say that
a Green function $\ginf\colon \mathbb{C}\longrightarrow \mathbb{R}$ is
\emph{computable} if the asymptotics of $\ginf$ can be effectively
controlled and there exists an algorithm that can compute rational
approximations of $\ginf$ of arbitrary precision within any bounded
rectangle, see Section \ref{sec:comp-essent-minim} for the formal
definitions.

The theoretical  algorithm described before can be extended to this
more general situation and leads to the following result.

\begin{thmA}[Theorem \ref{ess_min_computable}] \label{computability}
  Let $\ginf \colon \C\longrightarrow \R$ be a computable Green function. Then $\ess(\Ht_{\ginf})$ is a computable real number.  
\end{thmA}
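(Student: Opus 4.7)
The plan is to combine Theorem \ref{thmA:duality} (strong duality) with the countable dense family $\{\mu_{P,Q}\}$ described just before the statement to produce two computable sequences that sandwich $\ess(\Ht_\ginf)$ from above and below. Recall that a real number $\lambda$ is computable if there is an algorithm which, on input $n\in \N$, outputs a rational $q_n$ with $|q_n-\lambda|\leq 2^{-n}$; equivalently, $\lambda$ is the limit of a computable sequence with a computable modulus of convergence.

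For the upper approximation I enumerate pairs $(P,Q)$ of distinct monic irreducible polynomials in $\Z[x]$ and, for each pair, approximate $\int \ginf \,\dd \mu_{P,Q}$ to arbitrary precision. Since $\mu_{P,Q}$ is the pullback of normalized Haar measure on $S^1$ under the explicit rational map $P^{\deg Q + 1}/Q^{\deg P}$, it is supported on the compact lemniscate $\{|P|^{\deg Q+1}=|Q|^{\deg P}\}$, and computability of $\ginf$ makes the integral computable to any prescribed precision. By Proposition \ref{mu_PQ_construction} the family $\{\mu_{P,Q}\}$ is $\log$-weakly dense in $\mathscr{P}^{\overline{\Z}}_{\log}(\C)$, and by Theorem \ref{thmA:duality} we have $\primal(\ginf)=\ess(\Ht_\ginf)$, so the running minimum of these integrals gives a computable decreasing sequence converging to $\ess(\Ht_\ginf)$. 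For the lower approximation I enumerate tuples $\tau=(k;\,a_1,\dots,a_k;\,P_1,\dots,P_k)$ with $a_i\in \Q_{\geq 0}$, $P_i\in \Z[x]$ and $\sum_i a_i\deg P_i\leq 1$, and for each tuple compute $m(\tau)\coloneqq\inf_{z\in \C}\bigl(\ginf(z)-\sum_i a_i\log|P_i(z)|\bigr)$. The asymptotic $\ginf(z)=\log|z|+o(\log|z|)$ together with $\sum_i a_i\deg P_i\leq 1$ forces $f_\tau(z)\coloneqq\ginf(z)-\sum_i a_i\log|P_i(z)|$ to grow at least like $o(\log|z|)$ at infinity, so that $m(\tau)$ is attained on a closed disk of computable radius $R(\tau)$, hence is itself computable by the standard theory of computable continuous functions on compacts. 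By Theorem \ref{thmA:duality} the supremum of $m(\tau)$ over this enumeration equals $\dual(\ginf)=\ess(\Ht_\ginf)$.

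A parallel execution of the two enumerations produces, for any prescribed $n$, an upper estimate and a lower estimate that eventually differ by at most $2^{-n}$; outputting their midpoint yields a rational $2^{-n}$-approximation of $\ess(\Ht_\ginf)$, establishing computability. The main obstacle is the localization step in the lower-bound enumeration: the $o(\log|z|)$ remainder in the defining asymptotic of a Green function is purely qualitative, so in order to effectively produce $R(\tau)$ from the data one needs a computable modulus for that remainder, which must be part of the working definition of a \emph{computable Green function}. A secondary point to check is that restricting to tuples with $\sum_i a_i\deg P_i\leq 1$ costs no generality: tuples violating this bound yield $m(\tau)=-\infty$ and are useless, while the boundary case $\sum_i a_i\deg P_i=1$ can be recovered by rational perturbations from below, so the restriction preserves $\dual(\ginf)$ and hence the target value $\ess(\Ht_\ginf)$.
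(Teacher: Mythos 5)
Your proposal matches the paper's proof almost line for line: right computability via the countable $\log$-weakly dense family $\{\mu_{P,Q}\}$ (Propositions \ref{mu_PQ_construction} and \ref{prop:7}) together with $\primal(\ginf)=\ess(\Ht_\ginf)$, left computability via an enumeration of rational-coefficient tuples and the equality $\dual(\ginf)=\dual'_\Q(\ginf)$ (Proposition \ref{prop:5}), and the needed effective modulus for the $o(\log|z|)$ remainder is indeed built into the paper's definition of a computable Green function, exactly as you anticipated. The only small wrinkle is that you enumerate tuples with $\sum_i a_i\deg P_i\leq 1$ and assert the infimum is attained on a computable disk, which fails at the boundary $\sum_i a_i\deg P_i=1$ (there $f_\tau$ has only $o(\log|z|)$ growth and need not be coercive); the clean fix, which the paper adopts and you effectively propose with your ``rational perturbations from below'' remark, is to restrict the enumeration to the strict inequality $\sum_i a_i\deg P_i<1$.
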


Ultimately we would like to have an practical algorithm to compute the
essential minimum of specific height functions like those of
Zhang--Zagier and Faltings.  The fact that it can be reached from both sides through linear
programming might suggest strategies allowing to produce numerical
approximations with arbitrary precision.

\subsection*{Acknowledgments}
We thank Fran\c cois Balla\"y, Norman Levenberg, Mayuresh Londhe,
Jo\-a\-quim Ortega Cerd\`a, Crist\'obal Rojas and Micha\l{}
Szachniewicz for illustrating discussions, and specially Nuno Hultberg
for sharing with us the example in Section \ref{Nuno}. We also thank
the organizers of the conference ``Arithmetic and algebraic geometry
week" held at Universitatea Alexandru Ioan Cuza in Ia\c{s}i on
September 2025, where part of this work was~done.


\section{Potential theory and limit distribution of algebraic integers}
\label{sec:preliminaries}
\subsection{Potential theory on the complex plane and the Fekete--Szeg\H{o}
  theorem}
\label{sec:potent-theory-compl}
Let $\mathscr{P}(\C )$ be the set of all probability Borel
measures on $\C $ and let $\mathscr{P}_{\log}(\C )$ be
the subspace of those that can integrate $\log^+|z|$, i.e. 
\[
\mathscr{P}_{\log}(\C ) \coloneqq \left\{ \mu \in \mathscr{P}(\C ) \,\middle|\, \int \log^+|z| \,\dd\mu < \infty \right\}.
\]
Let $\mathscr{P}_c(\C )$ be the set of compactly supported
measures. Then $\mathscr{P}_c(\C ) \subseteq
\mathscr{P}_{\log}(\C )$. 

Potential theory of measures with non-necessarily compact support
has been developed for some time, see for example
\cite{BLW},\cite{OSLW}.  We recall the basic facts. 
For any $\mu \in \mathscr{P}_{\log}(\C )$, we define its \emph{potential function} as
\[
U^\mu \colon \C \longrightarrow \R\cup\{+\infty\}, \quad U^\mu(z)
\coloneqq \int \log\frac{1}{|w-z|} \,\dd\mu(w).
\]

\noindent The value $U^\mu(z)$ is well-defined in $\R\cup\{+\infty\}$ because the negative part $$\int_{|w-z|\geq 1} -\log |w-z| \,\dd \mu(w)$$ is finite.

\begin{lemma}\label{superharmonic potential}
Let $\mu \in  \mathscr{P}_{\log}(\C )$. Then $-U^{\mu}$ is subharmonic and hence locally integrable.
\end{lemma}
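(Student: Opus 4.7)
The plan is to verify directly the three defining conditions of a subharmonic function for $-U^{\mu}(z)=\int \log|w-z|\,\dd\mu(w)$: (i) $-U^{\mu}$ is upper semi-continuous with values in $[-\infty,+\infty)$; (ii) it satisfies the sub-mean value inequality on every disk; and (iii) it is not identically $-\infty$. The local integrability in the conclusion is then a classical consequence of subharmonicity together with (iii).

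For (i), the starting point is the elementary bound
\begin{equation*}
\log^{+}|w-z|\leq \log^{+}|w|+\log^{+}|z|+\log 2,
\end{equation*}
which, combined with $\mu\in \mathscr{P}_{\log}(\C)$, guarantees that $-U^{\mu}(z)$ is a well-defined element of $[-\infty,+\infty)$ for every $z\in \C$. On any bounded set $K\subset \C$ the function $w\mapsto \log^{+}|w|+C_{K}$ is a $\mu$-integrable upper envelope for $w\mapsto \log|w-z|$ uniformly in $z\in K$, so upper semi-continuity of $-U^{\mu}$ at an arbitrary $z_{0}$ follows from the reverse Fatou lemma applied to a sequence $z_{n}\to z_{0}$, using that $z\mapsto \log|w-z|$ is itself upper semi-continuous for each fixed $w$.

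For (ii), at arbitrary $z_{0}\in \C$ and $r>0$, I would invoke the classical identity
\begin{equation*}
\frac{1}{2\pi}\int_{0}^{2\pi}\log|w-z_{0}-re^{i\theta}|\,\dd\theta =\max\bigl\{\log|w-z_{0}|,\log r\bigr\}\geq \log|w-z_{0}|
\end{equation*}
and swap the two integrals via Fubini--Tonelli. To handle the singularity of $\log$, I would decompose $\log=\log^{+}-\log^{-}$ and apply Tonelli to each nonnegative piece separately; the $\log^{+}$ piece is $(\dd\theta\otimes \dd\mu)$-integrable on $[0,2\pi]\times \C$ by the bound in step (i), so the subtraction is legitimate and yields the sub-mean value inequality $\tfrac{1}{2\pi}\int_{0}^{2\pi}(-U^{\mu})(z_{0}+re^{i\theta})\,\dd\theta \geq -U^{\mu}(z_{0})$.

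For (iii), an area-averaging variant of the same calculation shows that $\frac{1}{\pi R^{2}}\int_{|z-z_{0}|\le R}(-U^{\mu})(z)\,\dd A(z)$ is finite for every choice of $z_{0}$ and $R>0$: by Fubini, the inner integral $\frac{1}{\pi R^{2}}\int_{|z-z_{0}|\leq R}\log|w-z|\,\dd A(z)$ is uniformly bounded above by a function of the form $C+\log^{+}|w|$, which is $\mu$-integrable. Hence $-U^{\mu}\not\equiv -\infty$, and the local integrability asserted in the lemma follows from the standard fact that a subharmonic function on $\C$ which is not identically $-\infty$ lies in $L^{1}_{\loc}(\C)$. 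The main obstacle I anticipate is the bookkeeping needed to justify the Fubini swap near the singularity $w=z$; the $\log^{+}/\log^{-}$ decomposition keeps this clean and is essentially the only nontrivial ingredient.
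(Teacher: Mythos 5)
Your proof is correct, and it takes a genuinely different route from the paper: the paper simply cites \cite[Lemma 3.2]{BLW} for the subharmonicity of $-U^{\mu}$ and then appeals to the classical fact that a subharmonic function not identically $-\infty$ is locally integrable. You instead verify the three defining properties directly — upper semi-continuity via reverse Fatou with the uniform integrable majorant $\log^{+}|w|+C_{K}$, the sub-mean value inequality via the Jensen-type identity $\tfrac{1}{2\pi}\int_{0}^{2\pi}\log|w-\zeta-re^{i\theta}|\,\dd\theta=\max\{\log|w-\zeta|,\log r\}$ together with the $\log^{+}/\log^{-}$ split to justify Fubini--Tonelli, and non-triviality via an area-average computation. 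Your argument is self-contained and essentially reconstructs the cited lemma from scratch, which is pedagogically valuable, whereas the paper keeps the exposition short by outsourcing; both are legitimate and the conclusions agree. The only thing I would tidy up is to state explicitly which disk you use in step (iii) (any disk works since $\C$ is connected), but the Fubini bookkeeping you flag as the main obstacle is handled correctly by the nonnegative decomposition.
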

\begin{proof}
  The first statement is \cite[Lemma 3.2]{BLW} and the second is a
  classical result on subharmonic functions. 
\end{proof}

To be on the safe side, we next check that Fubini--Tonelli theorem can be
applied  to potentials of measures in $\csP_{\log}(\C)$.
\begin{lemma}\label{Fubini}
Let $\mu_{1}, \mu _{2}  \in  \mathscr{P}_{\log}(\C )$. Then
\begin{displaymath}
  \iint  \log |w-z|^{-1}\,\dd \mu_{1}(w) \dd\mu _{2}(z) = \iint
  \log |w-z|^{-1}\,\dd \mu_{2}(z) \dd\mu _{1}(w).
\end{displaymath}
In other words, if $U_{1}$ and $U_{2}$ are the potentials of $\mu
_{1}$ and $\mu _{2}$, then
\begin{displaymath}
 \int  U_{1}\dd\mu _{2} = \int
 U_{2} \dd\mu _{1}. 
\end{displaymath}
\end{lemma}
\begin{proof}
  We write $\log^{-}|z|=-\min\{0,\log|z|\}$ so that
  $\log|z|=\log^{+}|z|-\log^{-}|z|$. We first check that 
\begin{equation}\label{eq:20}
  \iint  \log^{+} |w-z|\,\dd \mu_{1}(w) \dd\mu _{2}(z) = \iint
  \log^{+} |w-z|\,\dd \mu_{2}(z) \dd\mu _{1}(w) < \infty. 
\end{equation}
Indeed, using that $\mu _{1}  \in  \mathscr{P}_{\log}(\C )$ we have that
\begin{displaymath}
  \int  \log^{+} |w-z|\,\dd \mu_{1}(w)\le
  \int \log(2) + \log^{+}|w|+\log^{+}|z| \,\dd \mu_{1}(w)\le A+\log^{+}|z|
\end{displaymath}
for some real number $A>0$. Since $\mu _{2}  \in
\mathscr{P}_{\log}(\C )$, we deduce
\begin{displaymath}
    \iint  \log^{+} |w-z|\,\dd \mu_{1}(w) \dd\mu _{2}(z) <\infty
  \end{displaymath}
  and equation \eqref{eq:20} follows from the classical Fubini--Tonelli
  theorem.
  We turn to $\log^{-} |w-z|$. If one of the integrals
  \begin{displaymath}
  \iint  \log^{-} |w-z|\,\dd \mu_{1}(w) \dd\mu _{2}(z), \; \iint
  \log^{-} |w-z|\,\dd \mu_{2}(z) \dd\mu _{1}(w) 
  \end{displaymath}
is finite then by the Fubini--Tonelli theorem both integrals are
finite and agree. If both are $\infty$ then of course they also agree.
Since the positive part is finite, we conclude.
\end{proof}

\begin{lemma} \label{true_Fubini}
	Let $\mu \in \mathscr{P}_{\log}(\mathbb{C})$ and $\varphi \in C_c^\infty(\mathbb{C})$. Then
	\begin{displaymath}
		\iint \varphi(w) \log|z-w|^{-1} \,\mathrm{d}\mu(z) \mathrm{d} \lambda(w) = \iint \varphi(w) \log|z-w|^{-1} \,\mathrm{d}\lambda(w) \mathrm{d}\mu(z),
	\end{displaymath}
	where $\lambda$ is the Lebesgue measure on $\mathbb{C}$.
\end{lemma}

\begin{proof}
	It follows from the fact that
        \begin{displaymath}
        z \longmapsto \int \varphi(w)\log|z-w|^{-1} \dd\lambda(w) 
        \end{displaymath}
        is a continuous function of logarithmic growth, hence it is absolutely integrable by $\mu \in \mathscr{P}_{\log}(\mathbb{C})$ and the Fubini--Tonelli theorem applies.
\end{proof}

As in the case of compactly supported measures, we recover the measure
from its potential. We denote by $\Delta $ the distributional
Laplacian. 
\begin{lemma}\label{lemm:1}
  Let $\mu  \in  \mathscr{P}_{\log}(\C )$. Then $-\Delta U^{\mu }=2\pi
  \mu $ as distributions. 
\end{lemma}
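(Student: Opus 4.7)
The plan is to pair $-\Delta U^\mu$ with an arbitrary test function $\varphi\in C_c^\infty(\C)$, unfold the definition of $U^\mu$, and then reduce to the classical identity $-\Delta_z \log|w-z|^{-1}=2\pi\delta_w$ by a Fubini argument. The passage from the compactly supported case to the general case of $\mu\in\csP_{\log}(\C)$ is handled precisely by the Fubini statement in Lemma~\ref{Fubini}, so the work has already been done.

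In more detail, first I would note that $U^\mu$ is locally integrable by Lemma~\ref{superharmonic potential}, so it defines a distribution on $\C$ and its distributional Laplacian is well defined. For $\varphi\in C_c^\infty(\C)$ integration by parts gives
\[
\langle -\Delta U^\mu ,\varphi\rangle = -\int U^\mu(z)\,\Delta\varphi(z)\,\dd\lambda(z) = \int\!\!\int \log|w-z|\,\Delta\varphi(z)\,\dd\mu(w)\,\dd\lambda(z).
\]
Since $\Delta\varphi\in C_c^\infty(\C)$, Lemma~\ref{Fubini} applies with $\varphi$ replaced by $-\Delta\varphi$, so the order of integration may be swapped:
\[
\langle -\Delta U^\mu ,\varphi\rangle = \int\!\!\Bigl(\int \log|w-z|\,\Delta\varphi(z)\,\dd\lambda(z)\Bigr)\dd\mu(w).
\]

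Next I would invoke the classical distributional identity $\Delta_z\log|w-z|=2\pi\delta_w$, which is a direct computation using Green's formula on a small disc around $w$ and the explicit expression of the Laplacian in polar coordinates. This identity is standard and does not depend on the measure $\mu$. It gives
\[
\int \log|w-z|\,\Delta\varphi(z)\,\dd\lambda(z) = 2\pi\,\varphi(w),
\]
so the previous display becomes $2\pi\int\varphi\,\dd\mu=\langle 2\pi\mu,\varphi\rangle$. Since $\varphi$ was arbitrary, this is the required distributional equality $-\Delta U^\mu=2\pi\mu$.

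The only potential obstacle is to justify the application of Fubini, but this is exactly the content of Lemma~\ref{Fubini}, whose hypothesis $\varphi\in C_c^\infty(\C)$ is satisfied by $\Delta\varphi$ when $\varphi\in C_c^\infty(\C)$. Everything else is a transcription, to the setting of non-compactly supported measures in $\csP_{\log}(\C)$, of the classical argument for compactly supported potentials.
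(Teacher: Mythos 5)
Your proposal is correct and follows the same route as the paper's proof: unfold $U^\mu$ against $\varphi$, swap the order of integration via Lemma~\ref{Fubini} (applied to $\Delta\varphi\in C_c^\infty(\C)$), and then invoke the classical identity $\Delta_z\log|w-z|=2\pi\delta_w$; the paper phrases this last step as $U^{\Delta(\varphi)\lambda}=-2\pi\varphi$, which is the same fact.
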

\begin{proof}
  Let $\varphi \in C_c^\infty(\C )$ be a smooth function with compact support. For smooth functions we abuse
  notation and write $\Delta \varphi=\Delta (\varphi)\lambda $,
  where $\Delta(\varphi)$ is a the Laplacian of $\varphi$ as a function and $\lambda $ is the Lebesgue measure in $\C$. Then
  \begin{align*}
    (-\Delta U^{\mu})(\varphi)
    &=\int -U^{\mu }(z)\Delta (\varphi)(z)\,\dd \lambda(z) \\
    &=\iint \log|z-w| \Delta (\varphi)(z) \,\dd \mu (w) \dd \lambda (z)\\
    &=\iint \log|z-w| \Delta (\varphi)(z) \,\dd \lambda (z) \dd \mu (w)\\
    &=\int 2\pi \varphi(w) \dd \mu (w)\\
    &=2\pi \mu (\varphi).
  \end{align*}
  Here the third equality is Lemma \ref{true_Fubini}, and the fourth equality
  is   a classical application of Green's formula (e.g. as in the proof of \cite[Theorem 3.7.4]{Ransford})
  \begin{displaymath}
    \int \log|z-w| \,\dd(\Delta \varphi)(z)=2\pi \varphi(w).
  \end{displaymath}  
\end{proof}

We quote a result that will be useful later.
\begin{lemma} \label{Hormander_psh_magic}
	Let $f, f_n \colon \C  \longrightarrow \R \cup \{-\infty\}$, $n \geq 1$, be subharmonic functions. Then $f_n$ converges to $f$ in
        $L^1_{\loc}$ if and only if $f_n$ converges to
        $f$ as distributions.  
\end{lemma}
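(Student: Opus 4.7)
The easier direction, $L^1_{\loc}$ convergence implies distributional convergence, follows at once from duality: for any $\varphi \in C_c^\infty(\C )$,
\[
\left| \int (f_n - f)\, \varphi \, \dd\lambda \right| \le \|\varphi\|_\infty \, \|f_n - f\|_{L^1(\supp \varphi)} \longrightarrow 0.
\]
The content of the lemma is the converse implication, for which the subharmonicity hypothesis is essential (it is false for arbitrary distributions).

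My plan for the converse is a three-step argument. First, I would upgrade distributional convergence to a locally uniform upper bound on the sequence $(f_n)$. Fix a nonnegative radial $\rho \in C_c^\infty(\C )$ with $\int \rho \, \dd\lambda = 1$ and support in $B(0,1)$; iterated application of the submean value inequality on concentric spheres yields $f_n(x) \le (f_n \ast \rho)(x) = \int f_n(y)\, \rho(x-y)\, \dd\lambda(y)$ for every $x$. For any compact $K \subset \C $, the translated family $\{\rho(x-\cdot) : x \in K\}$ is a bounded subset of $C_c^\infty(\C )$ (all supports sit in a common compact, all derivatives are dominated by the corresponding derivatives of $\rho$), so the Banach--Steinhaus theorem applied to the pointwise convergent sequence of distributions $(f_n)$ yields $\sup_{n,\, x \in K}(f_n \ast \rho)(x) < \infty$, and hence $\sup_{n,\, x \in K} f_n(x) < \infty$. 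Second, I would invoke the classical compactness result for locally uniformly bounded-above sequences of subharmonic functions (H\"ormander, \emph{The Analysis of Linear Partial Differential Operators I}, Thm.~4.1.9): such a sequence either tends to $-\infty$ uniformly on compact subsets, or admits a subsequence converging in $L^1_{\loc}$ to a subharmonic limit. Since $f_n \to f$ as distributions to a genuine (i.e.\ not identically $-\infty$) subharmonic function, the first alternative is impossible. Third, by the easy direction already established, any $L^1_{\loc}$-subsequential limit $g$ of $(f_n)$ must coincide with $f$ as a distribution and therefore almost everywhere; the standard subsequence principle then forces the whole sequence $f_n \to f$ in $L^1_{\loc}$.

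The main obstacle is the first step: extracting pointwise upper bounds from purely distributional information. The key insight, which is entirely powered by subharmonicity, is that the non-smooth ball-average tests one would naively reach for can be replaced by smooth translates of a fixed mollifier. These form an equicontinuous family in $C_c^\infty(\C )$ and are thus exactly the sort of object controlled by the uniform-boundedness principle for distributions. Step two is a deep but well documented classical fact about subharmonic functions, and step three is a formal subsequence argument.
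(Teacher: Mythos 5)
Your proof is correct. The paper, however, does not argue at all: it establishes the lemma by citing Theorem~3.2.13 of H\"ormander's \emph{Notions of Convexity}, which is precisely the statement being proved. What you have done, in effect, is reconstruct a proof of that cited theorem from more primitive ingredients: the submean value property (integrated against a radial mollifier, giving $f_n \le f_n \ast \rho$ pointwise), the Banach--Steinhaus/uniform boundedness principle for distributions applied to the family $\{\rho(x-\cdot) : x \in K\}$ (which you correctly observe is a bounded subset of $C_c^\infty(\C)$, so that pointwise convergence of the distributions $f_n$ yields $\sup_{n,\,x\in K} (f_n\ast\rho)(x) < \infty$ and hence a locally uniform upper bound on $f_n$), the compactness theorem for locally bounded-above sequences of subharmonic functions from H\"ormander's \emph{Analysis of Linear Partial Differential Operators~I}, Theorem~4.1.9, and a standard subsubsequence argument together with the easy direction to identify all $L^1_{\loc}$ limit points with $f$. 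Each step is sound; the alternative ``$f_n \to -\infty$ uniformly on compacts'' is indeed excluded because $f$, being subharmonic on the connected set $\C$, is not identically $-\infty$ and is therefore locally integrable. The trade-off is the usual one: the paper's one-line citation is economical but opaque, while your argument is self-contained and isolates exactly where subharmonicity enters (the mollifier inequality, which converts distributional information into a pointwise upper bound, and H\"ormander's compactness result).
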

\begin{proof}
	\cite[Theorem 3.2.13]{Hormander}.
\end{proof}


We next recall the definition of capacity and the Fekete--Szeg\H{o}
theorem.  
The \emph{energy} of a measure $\mu \in \mathscr{P}_{\log}(\C )$ as is defined as
\[
I(\mu) = \int U^{\mu} \,\dd\mu = \iint -\log|w-z|
\,\dd\mu(w)\dd\mu(z) \in \R\cup\{+\infty\}.
\]

Let $K \subseteq \C $ be a compact subset, and let $\mathscr{P}(K)$ be
the space of all probability measures supported on $K$. We say $K$ is
\emph{polar} if $I(\mu)=+\infty$ for all $\mu \in \mathscr{P}(K)$. 

Now assume $K$ is non-polar, then there exists a unique  measure
$\mu_K \in \mathscr{P}(K)$ such that $I(\mu_K) = \inf_{\mu \in
  \mathscr{P}(K)} I(\mu)$ \cite[Theorem 3.7.6]{Ransford}. This $\mu_K$
is called the \emph{equilibrium measure} of $K$, and the
\emph{capacity} of $K$ is defined as $\operatorname{cap}(K) \coloneqq
e^{-I(\mu_K)}$.
 
Let $\alpha $ be an algebraic number and $O(\alpha ) \coloneqq
\operatorname{Gal}(\overline{\Q }/\Q ) \cdot \alpha $ be the Galois
orbit of $\alpha $, that is the complete set of conjugates of $\alpha
$. Let $S \subseteq \C $ be a subset, we say $\alpha $ is \emph{totally in
  $S$} if $O(\alpha ) \subseteq S$. We denote by $\delta_{O(\alpha)}$ the associated discrete measure
 \begin{displaymath}
   \delta _{O(\alpha)}=\frac{1}{\deg(\alpha)}
   \sum_{\beta  \in O(\alpha)} \delta_\beta.
 \end{displaymath} 
\begin{theorem}[Fekete--Szeg\H{o} \cite{Fekete,Fekete-Szego}] \label{Fekete-Szego}
	Let $K \subseteq \C $ be a compact subset. Then
\begin{enumerate}
\item if $\operatorname{cap}(K)<1$, then there exists an open
  neighborhood $U$ of $K$, such that there are only finitely many
  algebraic integers totally in $U$. 
\item if $\operatorname{cap}(K) \geq 1$ and $K$ is invariant under
  complex conjugation, then for any open neighborhood $U$ of $K$,
  there are infinitely many algebraic integers totally in $U$. 
\end{enumerate}
\end{theorem}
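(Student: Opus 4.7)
The two assertions have rather different flavors: the first is Fekete's finiteness criterion via the discriminant, while the second is the more delicate Fekete--Szeg\H{o} existence theorem, whose proof is a perturbation argument on Fekete polynomials. I would address them in turn. By the outer continuity of logarithmic capacity, we may shrink $U$ so that $\capac(\overline{U})<1$. If $\alpha $ is an algebraic integer of degree $d$ totally in $U$, its minimal polynomial $P_{\alpha }\in \Z [x]$ is monic and irreducible, hence separable, so its discriminant is a nonzero rational integer and $|\operatorname{disc}(P_{\alpha })|=\prod_{i<j}|\beta_{i}-\beta_{j}|^{2}\ge 1$. On the other hand, the Fekete $d$-diameter $\delta_{d}(\overline{U}) \coloneqq \max_{z_{1},\dots ,z_{d}\in \overline{U}}\bigl(\prod_{i<j}|z_{i}-z_{j}|\bigr)^{2/(d(d-1))}$ bounds the product above and satisfies the classical monotone convergence $\delta_{d}(\overline{U})\searrow \capac (\overline{U})<1$. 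For $d$ large enough we obtain $\delta _{d}(\overline{U})^{d(d-1)}<1$, contradicting the discriminant bound. Hence only finitely many degrees $d$ occur; for each such $d$, Vieta's formulas applied to roots in the bounded set $\overline U$ bound the coefficients of $P_{\alpha }$, leaving only finitely many $\alpha $.

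For the second assertion, the plan is to produce, for each large $n$, a monic polynomial $g_{n}\in \Z [x]$ of degree $n$ whose roots all lie in $U$. Since $K$ is conjugation invariant, one can choose a Fekete $n$-tuple $\zeta _{1}^{(n)},\dots ,\zeta _{n}^{(n)}\in K$ invariant as a multiset under complex conjugation, so that the Fekete polynomial $f_{n}(z) \coloneqq \prod_{i}(z-\zeta _{i}^{(n)})$ has real coefficients; one then forms $g_{n}$ by a suitable integer perturbation of the coefficients of $f_{n}$. The key analytic input is the classical equidistribution of Fekete points $\frac{1}{n}\sum_{i}\delta_{\zeta_{i}^{(n)}}\longrightarrow \mu _{K}$, which yields $\frac{1}{n}\log|f_{n}(z)| \longrightarrow -U^{\mu_{K}}(z)$ uniformly on compact subsets of $\C \setminus K$; by Frostman's theorem $-U^{\mu_{K}}(z)\ge \log \capac (K)\ge 0$ off $K$, strictly so on compacts away from $K$. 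Therefore $|f_{n}(z)|$ grows exponentially on $\partial U$. A Rouch\'e argument then shows that if $|g_{n}-f_{n}|<|f_{n}|$ on $\partial U$, all $n$ roots of $g_{n}$ lie in $U$; these roots are algebraic integers of degree dividing $n$ totally in $U$, and varying $n$ produces infinitely many distinct such elements.

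The main obstacle is the quantitative step in the second assertion. The naive nearest-integer rounding yields $|g_{n}(z)-f_{n}(z)|$ of polynomial size $O(|z|^{n})$ on compact balls, which competes with but does not automatically beat the exponential growth of $|f_{n}(z)|$ off $K$, whose rate is governed by the Green function of $\C \setminus K$ with pole at infinity. Overcoming this requires the more delicate two-step construction of Fekete and Szeg\H{o}, which builds $g_{n}$ by combining the Fekete points with an auxiliary polynomial adapted to the geometry of $K$ inside $U$ and uses the conjugation invariance of $K$ to ensure that only real coefficients need to be rounded. The borderline case $\capac(K)=1$ is the most delicate; when $\capac (K)>1$ a uniform exponential margin is available and a more straightforward rounding suffices.
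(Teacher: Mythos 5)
The paper does not prove this theorem; it is stated with a citation to the original works of Fekete and Fekete--Szeg\H{o}, so there is no in-paper argument to compare against, and the proposal must be judged on its own. Your argument for the first assertion (Fekete's finiteness criterion) is correct and complete: by the outer continuity of capacity along decreasing compacts one may shrink $U$ so that $\capac(\overline{U})<1$; then $|\operatorname{disc}(P_\alpha)|\ge 1$ together with $\delta_d(\overline{U})\searrow\capac(\overline{U})<1$ rules out all large degrees $d$, and for each of the remaining finitely many degrees Vieta's formulas on the bounded set $\overline{U}$ leave only finitely many candidates. This is the standard proof.

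Your treatment of the second assertion, however, stops short at the decisive step. You correctly set up the Rouch\'e scheme and correctly diagnose the difficulty: rounding the coefficients of the Fekete polynomial $f_n$ to the nearest integers produces an error of order $\sum_{k<n}|z|^k$, whereas the lower bound $|f_n(z)|\approx e^{n(g_K(z)+\log\capac(K))}$ on $\partial U$ grows at exactly the competing rate $|z|^n$ near the outer part of $\partial U$ (since $g_K(z)+\log\capac(K)=\log|z|+o(1)$ as $|z|\to\infty$), so there is no margin in the $n$-th root and the naive estimate does not close. Having identified this, you then write that overcoming it ``requires the more delicate two-step construction of Fekete and Szeg\H{o}'' --- that is, you defer the crucial step to the very theorem being proved. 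That is a genuine gap, not a proof. The side remark that $\capac(K)>1$ provides a ``uniform exponential margin'' sufficient for a ``more straightforward rounding'' is likewise unjustified as stated: the margin $e^{n(g_K(z)+\log\capac(K))}/|z|^n$ does not improve as $\capac(K)$ grows, because $g_K$ adjusts to compensate; the gain from $\log\capac(K)>0$ only helps near points of $\partial U$ close to $K$ where $g_K\approx 0$, not on the whole boundary. The actual Fekete--Szeg\H{o} argument avoids brute-force rounding of a single Fekete polynomial by constructing integer polynomials adapted to an exhaustion of $U$ by lemniscates, and this is precisely the content you would need to supply to make the sketch into a proof.
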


\begin{remark} \label{Rumely_equidistribution}
	In the critical case $\operatorname{cap}(K) = 1$, let $(
        \alpha _n )$ be a sequence of algebraic integers such that
        $O(\alpha _n) \subseteq K_{1/n}\coloneqq \big\{ z \mid |z-w|<1/n \text{ for
          some }w \in K \big\}$. Then the sequence of measures $\delta _{O(\alpha _{n})}$
        converges to the equilibrium measure $\mu_K$
        \cite[Theorem 1]{Rumely}. 
\end{remark}

\subsection{Smith and Orloski--Sardari theorems}
\label{sec:sos-equid-theor}
When $\operatorname{cap}(K)\geq 1$, by Fekete--Szeg\H{o} there will
be infinitely many algebraic integers totally contained in any open
neighborhood of $K$ and we would like to know what are the possible accumulation measures describing their asymptotic distribution. For $\capac(K)=1$ this is answered by a result of
Rumely as in Remark \ref{Rumely_equidistribution}, while for $\capac(K)>1$, this is answered in the recent breakthrough by
Smith and Orloski--Sardari.

We say a sequence $( \mu_n )$ in $\mathscr{P}(\C )$ \emph{converges weakly} to $\mu$, if we have for any continuous and compactly supported function $f \colon \C  \longrightarrow \R $,
\[
\lim_{n \rightarrow \infty} \int f \,\dd\mu_n = \int f \,\dd\mu.
\]
We can replace ``compactly supported" by ``bounded" \cite[Lemma 2.2]{Bi97}.

For compactly supported measures there is a stronger notion of
convergence. We say a sequence $(
\mu_n )$ in $\mathscr{P}_c(\C )$ \emph{converges properly} to
$\mu \in \mathscr{P}_c(\C )$, if it converges weakly and there exists a compact subset $K \subseteq \C $ such that $\operatorname{supp} (\mu_n) \subseteq K$ for all $n$. Note that this is equivalent to
\begin{equation}\label{any continuous}
\lim_{n \rightarrow \infty} \int f \,\dd\mu_n = \int f \,\dd\mu
\end{equation} for any continuous function $f \colon \C  \longrightarrow \R $.

A reformulation of the result by Smith \cite[Theorem 1.5]{Smith} and
Orloski--Sardari \cite[Theorem 1.2]{Orloski-Sardari} is
\begin{theorem}[Smith, Orloski--Sardari]\label{thm:2}
	Assume $\mu \in \mathscr{P}_c(\C )$ is invariant under complex
        conjugation. Then the following are equivalent
	\begin{enumerate}
		\item\label{item:1} there exists a sequence of
                  distinct algebraic integers $(\alpha _n ) $ such that $\delta_{O(\alpha _n)}$ converges properly
                  to  $ \mu$,
		\item\label{item:2} $\displaystyle \int \log|Q|
                  \,\dd\mu \geq 0$ for any $Q \in \Z [x]$. 
	\end{enumerate}
\end{theorem}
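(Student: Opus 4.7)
The implication (1) $\Rightarrow$ (2) is elementary, and I would prove it via resultants. Fix $Q \in \Z[x]$. Since the $\alpha_n$ are distinct algebraic numbers and $Q$ has finitely many roots, for all but finitely many $n$ the polynomials $P_{\alpha_n}$ and $Q$ are coprime in $\Q[x]$, so the resultant
\[
\Res(P_{\alpha_n}, Q) = a_{\alpha_n}^{\deg Q} \prod_{\beta \in O(\alpha_n)} Q(\beta)
\]
is a non-zero rational integer (here $a_{\alpha_n}=1$ because $\alpha_n$ is an algebraic integer). Taking absolute values and logarithms and dividing by $d_n \coloneqq \#O(\alpha_n)$ gives $\int \log|Q| \,\dd \delta_{O(\alpha_n)} \geq 0$ for such $n$. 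To pass to the limit under proper convergence, I would truncate: for each $N \geq 1$ the function $\max(\log|Q|, -N)$ is continuous and bounded on $\C$, so proper convergence yields
\[
\int \max(\log|Q|, -N) \,\dd \mu = \lim_n \int \max(\log|Q|, -N) \,\dd \delta_{O(\alpha_n)} \geq 0.
\]
Since $\log|Q|$ is bounded above on the compact support of $\mu$, the monotone convergence theorem applied as $N \to \infty$ then yields $\int \log|Q| \,\dd \mu \geq 0$, which is (2).

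The converse (2) $\Rightarrow$ (1) is the deep direction and is the content of the recent breakthroughs of Smith \cite{Smith} and Orloski--Sardari \cite{Orloski-Sardari}; for this sketch I would cite those results without reproducing their proofs. Informally, the strategy is to construct, for each $\varepsilon > 0$, a monic polynomial $F \in \Z[x]$ of large degree whose root distribution is proper-close to $\mu$ within error $\varepsilon$. One starts by approximating $\mu$ by a conjugation-symmetric discrete probability measure supported on a large finite set of complex points, and then attempts to realise (a slight perturbation of) those points as the roots of an integer polynomial. The nonnegativity condition (2) provides exactly the quantitative slack needed to absorb the rounding errors incurred when adjusting the coefficients to lie in $\Z$. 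In Smith's approach the crucial device is the \emph{sweetening} procedure, a controlled local perturbation of the discrete approximation that guarantees the existence of the required integral polynomial; Orloski--Sardari obtain the same conclusion via transfinite diameter and Chebyshev-type arguments. Passing to irreducible factors of the polynomials $F$ produces the desired sequence of algebraic integers $\alpha_n$ with $O(\alpha_n) \longrightarrow \mu$ properly.

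The hard part is of course (2) $\Rightarrow$ (1): the problem of producing monic integer polynomials with prescribed approximate root distributions is a classical one going back to Fekete and Szeg\H{o}, and handling general $\mu$ of capacity one or larger satisfying the nonnegativity condition (2) required substantial new ideas beyond the equilibrium-measure framework of Theorem \ref{Fekete-Szego} and Remark \ref{Rumely_equidistribution}. Everything else in the statement, including the passage to the limit in (1) $\Rightarrow$ (2), reduces to routine applications of proper convergence together with the elementary integrality of resultants.
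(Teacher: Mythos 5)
Your proposal is essentially what the paper does: Theorem \ref{thm:2} is stated as a reformulation of the external results of Smith and Orloski--Sardari and is not proved in the paper at all, so the deep implication (2)~$\Rightarrow$~(1) is a citation in both cases, and your sketch of their strategies (sweetened measures for Smith, transfinite-diameter arguments for Orloski--Sardari) is accurate. Your resultant-plus-truncation argument for the elementary direction (1)~$\Rightarrow$~(2) is correct and coincides with the argument the paper actually writes out later, in the proof of the non-compact version (Theorem \ref{SOS non compact}), where the same truncation device $\max\{-M,\log|Q|\}$ appears as Lemma \ref{monotone}; the only minor point worth tightening is that the nonvanishing of $\Res(P_{\alpha_n},Q)$ for $n\gg 0$ is most cleanly justified via $\deg(\alpha_n)\to\infty$ (Remark \ref{degree_unbounded}) rather than merely the distinctness of the $\alpha_n$, though both observations suffice.
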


\begin{remark} \label{degree_unbounded}
	Let $\Ht \colon \overline{\Q } \longrightarrow \R $ be
        the standard Weil height function. Suppose $(\alpha _n )$ is a
        sequence of distinct algebraic
        integers such that $\delta_{O(\alpha _n)}$ converges properly
        to $\mu \in \mathscr{P}_c(\mathbb{C})$, then 
	\[
	\lim_{n\to \infty}\Ht(\alpha _n) = \lim_{n\to \infty}\int \log^+|z|\,\dd \delta _{O(\alpha _n)}
        = \int \log^+|z|\,\dd\mu < \infty. 
      \]
      This forces $\deg(\alpha _n)$ to converge to $\infty$ because
        otherwise it would contradict the Northcott property. 
\end{remark}

\begin{remark}\label{SOS-FS}
	The original results of Smith and Orloski--Sardari are in fact
        stronger and are more of Fekete--Szeg\H{o} style, in the sense that
        they prove that for any $\varepsilon>0$ there exists a sequence of
        algebraic integers $(\alpha _n)$ that are totally contained in
	\[
	\supp(\mu)_{\varepsilon} \coloneqq \big\{ z\in \C \mid
        |z-w|<\varepsilon \text{ for some }w \in
          \supp(\mu) \big\}. 
      \]
      and such that $\delta_{O(\alpha _n)}$ converges weakly to  $\mu$. This is
      much stronger than just proper convergence.  
      Besides, when $\mu$ is supported on $\R $, they prove that 
      the approximating algebraic integers $\alpha _n$ can be chosen
      to be totally real.
\end{remark}

\subsection{$\log$-weak convergence of measures}
We say a function $f \colon \C  \longrightarrow \nobreak
\R $ has \emph{logarithmic growth}, if there exist
constants
$A,B \geq 0$ such that for all $z \in \C $, we have $|f(z)| \leq A+B
\log^+|z|$. 

\begin{definition} \label{def:log-tight} Let $\mu\in
  \mathscr{P}_{\log}(\C )$ and  $(\mu_n)$ be a sequence in
  $\mathscr{P}_{\log}(\C )$. We say $\mu_n$
\emph{converges $\log$-weakly} to $\mu$, if for any continuous
function $f$  with logarithmic growth, we have 
\[
\lim_{n \to \infty} \int f \,\dd\mu_n = \int f \,\dd\mu.
\]
\end{definition}

\begin{definition}\label{def:5}
  Let $\varphi \colon \C \longrightarrow \R_{\ge 0}$ be a continuous
  function. We say that a sequence $( \mu_n )$ is \emph{$\varphi$-tight}, if
  \[
    \lim_{R \rightarrow \infty} \sup_{n\in \N} \int_{|z| \geq R} \varphi
    \,\dd\mu_n = 0.
  \]
  In particular, we say that a sequence $( \mu_n )$ is \emph{tight} if it is
  $1$-tight, and we say that it is \emph{$\log$-tight},
  if it is $\log^+$-tight.
\end{definition}
The next two lemmas are elementary and follow easily from the definition. 
\begin{lemma} \label{tight_transition}
	Let $\varphi,\psi \colon \C \longrightarrow \R_{\ge 0}$ be continuous functions
        with $\psi =o(\varphi)$ as $|z| \longrightarrow \infty$. Suppose we have
        a sequence $(\mu_n ) $ such that 
	\[
	\sup_{n\in \N} \int \varphi \,\dd\mu_n < \infty.
      \]
      Then $(\mu_n )$ is $\psi $-tight.
\end{lemma}
\begin{lemma}\label{estrecha}
Let $\mu\in
  \mathscr{P}_{\log}(\C )$ and  $(\mu_n)$ be a sequence in
  $\mathscr{P}_{\log}(\C )$ that converges weakly to $\mu$. Then, the following assertions are equivalent:
\begin{enumerate}
\item the sequence $(\mu_n)$ converges $\log$-weakly to $\mu$;
\item the sequence $(\mu_n)$ is log-tight;
\item we have that 
  \begin{displaymath}
    \lim_{n\rightarrow \infty} \int \log^+|z|\,\dd\mu_n(z)=\int \log^+|z| \,\dd\mu(z).
  \end{displaymath}
\end{enumerate}
\end{lemma}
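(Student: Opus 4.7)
The plan is to establish the chain of implications (i) $\Rightarrow$ (iii) $\Rightarrow$ (ii) $\Rightarrow$ (i), using a cut-off argument based on a family of continuous test functions $\varphi_R\in C_c(\C)$ with $\varphi_R=1$ on $\{|z|\le R\}$, $\varphi_R=0$ on $\{|z|\ge R+1\}$ and $0\le \varphi_R\le 1$.

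The implication (i) $\Rightarrow$ (iii) is immediate upon applying the definition of $\log$-weak convergence to the continuous function $f(z)=\log^+|z|$, which has logarithmic growth. For (iii) $\Rightarrow$ (ii), I would observe that the product $\varphi_R\log^+|z|$ is continuous and compactly supported, so by the weak convergence hypothesis $\int \varphi_R\log^+|z|\,\dd \mu_n\to \int\varphi_R\log^+|z|\,\dd \mu$. Combining this with (iii) yields
\begin{displaymath}
\lim_{n\to\infty}\int (1-\varphi_R)\log^+|z|\,\dd \mu_n = \int (1-\varphi_R)\log^+|z|\,\dd \mu.
\end{displaymath}
Since $\mu\in\csP_{\log}(\C)$ and $(1-\varphi_R)\log^+|z|\to 0$ pointwise while being dominated by $\log^+|z|$, dominated convergence shows that the right-hand side tends to $0$ as $R\to\infty$. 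Given $\varepsilon>0$, I would fix $R$ large enough so this right-hand side is at most $\varepsilon/2$, and then choose $N$ so that for $n\ge N$ the left-hand side is at most $\varepsilon$; since each of the finitely many $\mu_n$ with $n<N$ lies in $\csP_{\log}(\C)$, one can enlarge $R$ to also control them individually. Using the pointwise inequality $\mathbf{1}_{|z|>R+1}\le 1-\varphi_R$, this provides the compact set $K=\{|z|\le R+1\}$ witnessing log-tightness.

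For (ii) $\Rightarrow$ (i), given a continuous $f$ with $|f(z)|\le A+B\log^+|z|$, I would split $f=f\varphi_R+f(1-\varphi_R)$. The first piece is continuous and compactly supported, so weak convergence gives $\int f\varphi_R\,\dd \mu_n\to \int f\varphi_R\,\dd \mu$. For the tail, the bound
\begin{displaymath}
\Bigl|\int f(1-\varphi_R)\,\dd \mu_n\Bigr|\le A\,\mu_n(|z|\ge R)+B\int_{|z|\ge R}\log^+|z|\,\dd \mu_n
\end{displaymath}
together with the elementary inequality $\mu_n(|z|\ge R)\le (\log R)^{-1}\int_{|z|\ge R}\log^+|z|\,\dd \mu_n$ for $R\ge e$ shows that log-tightness makes the tail uniformly small in $n$, and dominated convergence handles the analogous tail for $\mu$. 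Choosing $R$ large and then $n$ large concludes the proof of the equivalence.

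The argument is essentially bookkeeping; I do not anticipate a genuine obstacle, only the mild nuisance of handling the finitely many initial $\mu_n$ in the (iii) $\Rightarrow$ (ii) step, where one must use that each individual $\mu_n$ lies in $\csP_{\log}(\C)$ to enlarge $R$ to a single compact set valid for all $n$ simultaneously.
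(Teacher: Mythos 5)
Your proof is correct, and the implication chain (i)~$\Rightarrow$~(iii)~$\Rightarrow$~(ii)~$\Rightarrow$~(i) together with the cut-off functions $\varphi_R$ is exactly the natural bookkeeping. The paper itself offers no proof (it only remarks that the lemma ``follows readily from the definitions''), so there is nothing to compare against; your argument correctly fills in those details. Two small points worth flagging for the write-up: in (iii)~$\Rightarrow$~(ii), when you enlarge $R$ to a larger $R'$ to control the finitely many $\mu_n$ with $n<N$, you should note that the bound for $n\ge N$ is preserved because $1-\varphi_{R'}\le 1-\varphi_R$ pointwise (choose the family $\varphi_R$ monotone in $R$); and in (ii)~$\Rightarrow$~(i), the Markov-type bound gives a tail estimate of the form $(A/\log R + B)\int_{|z|\ge R}\log^+|z|\,\dd\mu_n$, whose coefficient does not go to zero as $R\to\infty$, so the smallness must come entirely from the log-tightness factor — which is exactly what the definition delivers.
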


We record here the following fact for future use.

\begin{lemma}\label{monotone}
Let $(\mu_n ) $ be a sequence in $\mathscr{P}(\C )$
that converges $\log$-weakly to $\mu$. Then, for all polynomials
$Q(x) \in \C[x]$, we have that
\begin{displaymath}
  \int \log |Q|\,\dd\mu \geq \liminf_{n\to \infty}
  \int \log |Q|\,\dd\mu_n.
\end{displaymath}
\end{lemma}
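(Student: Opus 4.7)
The plan is to reduce the claim on the non-continuous function $\log|Q|$ to statements about continuous functions of logarithmic growth, so that the $\log$-weak convergence hypothesis can be applied directly, and then to exchange limits by monotone convergence. The only subtle point is that $\log|Q|$ is allowed to take the value $-\infty$ at the zeros of $Q$, so that $\int \log|Q|\,\dd\mu$ may well equal $-\infty$; this is handled cleanly by a truncation from below.

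First I would truncate: for each $M>0$, set $f_{M}(z) \coloneqq \max\{\log|Q(z)|,\,-M\}$. Each $f_{M}$ is continuous on $\C$, satisfies $-M \leq f_{M} \leq \log^{+}|Q|$, and hence has logarithmic growth. Since $\log|Q| \leq f_{M}$ pointwise, for every $n$ we have $\int \log|Q|\,\dd\mu_n \leq \int f_{M}\,\dd\mu_n$. Applying the $\log$-weak convergence hypothesis to the continuous function $f_{M}$ yields
\[
  \limsup_{n\to\infty}\int \log|Q|\,\dd\mu_n \;\leq\; \lim_{n\to\infty}\int f_{M}\,\dd\mu_n \;=\; \int f_{M}\,\dd\mu.
\]

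Next I would let $M\to\infty$. The functions $f_{M}$ decrease pointwise to $\log|Q|$, and the non-negative differences $\log^{+}|Q|-f_{M}$ increase monotonically to $\log^{-}|Q|$. Since $\log^{+}|Q|$ is $\mu$-integrable (because $\mu\in\csP_{\log}(\C)$), monotone convergence applied to these differences gives $\int f_{M}\,\dd\mu \longrightarrow \int \log|Q|\,\dd\mu$ in $[-\infty,+\infty)$, regardless of whether $\int \log^{-}|Q|\,\dd\mu$ is finite or infinite. Combining with the previous estimate then yields $\limsup_{n\to\infty}\int \log|Q|\,\dd\mu_n \leq \int \log|Q|\,\dd\mu$, which is in fact stronger than the asserted inequality with $\liminf$.
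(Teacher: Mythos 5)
Your proof is correct and takes essentially the same route as the paper's: truncate $\log|Q|$ from below by the continuous, logarithmic-growth function $\max\{\log|Q|,-M\}$, apply $\log$-weak convergence to the truncated function, and then let $M\to\infty$ by monotone convergence. Your version is, if anything, slightly more careful in spelling out that the monotone convergence theorem should be applied to the increasing nonnegative differences $\log^{+}|Q|-f_{M}\nearrow\log^{-}|Q|$ (using $\mu$-integrability of $\log^{+}|Q|$), and in noting that the argument delivers the stronger $\limsup$ inequality.
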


\begin{proof}
Using the monotone convergence theorem and \eqref{any continuous}, we have that 
\begin{align*}
\int \log |Q|\,\dd\mu& = \lim_{M\rightarrow \infty} \int \max\{-M, \log |Q|\}\,\dd\mu\\
& = \lim_{M\rightarrow \infty} \lim_{n\rightarrow \infty}\int
  \max\{-M, \log |Q|\} \,\dd\mu_n\\
& \ge \lim_{M\rightarrow \infty} \liminf_{n\rightarrow \infty}\int
  \log |Q| \,\dd\mu_n, \\
  &= \liminf_{n\rightarrow \infty}\int
  \log |Q| \,\dd\mu_n
\end{align*}
the second equality follows from the fact that 
$H^M(z)= \max\big\{-M, \log |Q(z)|\big\}$ is continuous of logarithmic
growth.
\end{proof}

It will be useful in what follows  to be able to do  diagonal arguments when  dealing with weak convergence and
$\log$-weak convergence.

\begin{lemma} \label{diagonal_argument}
	Consider a measure $\mu\in
  \mathscr{P}(\C )$, and a sequence $(\mu_n)$ and a double
  sequence $(\mu_{n,m})$  in $\mathscr{P}(\C )$. Assume that
	\begin{itemize}
		\item $\mu_n$ converges  weakly to $\mu$  as $n \longrightarrow \infty$,
		\item  for
                  each $n$, $\mu_{n,m} $ converges  weakly to $\mu_{n}$ as $m \longrightarrow \infty$. 
	\end{itemize}
	Then there is a diagonal subsequence $(\mu_{n_i,m_i})_{i \in \N}$ that
        converges weakly to  $\mu$ as $i \longrightarrow \infty$.
	
	Moreover, the same holds if we consider $\mu,\mu_n$ and $\mu_{n,m}$ in
        $\mathscr{P}_{\log}(\C )$ and replace weak convergence
        with $\log$-weak convergence. 
\end{lemma}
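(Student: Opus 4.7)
The plan is to exploit the metrizability of the weak topology on $\mathscr{P}(\C)$ and carry out a standard diagonal procedure, and then to lift the result to the $\log$-weak setting via the criterion in Lemma \ref{estrecha}.

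First, I would recall (or cite) that since $\C$ is a Polish space, the space $\mathscr{P}(\C)$ endowed with the weak topology is metrizable; let $d$ be a compatible metric, e.g.\ the L\'evy--Prokhorov metric. Using $\mu_n\to\mu$ weakly, choose a strictly increasing sequence of indices $n_1<n_2<\cdots$ such that $d(\mu_{n_i},\mu)<1/(2i)$. For each fixed $i$, since $\mu_{n_i,m}\to\mu_{n_i}$ weakly as $m\to\infty$, we may select $m_i$ with $d(\mu_{n_i,m_i},\mu_{n_i})<1/(2i)$ (and, if desired, $m_i>m_{i-1}$). The triangle inequality then yields $d(\mu_{n_i,m_i},\mu)<1/i$, so $\mu_{n_i,m_i}\to\mu$ weakly as $i\to\infty$. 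This settles the first claim.

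For the $\log$-weak version, by Lemma \ref{estrecha} it suffices to produce a diagonal sequence $\mu_{n_i,m_i}$ which converges weakly to $\mu$ and for which $\int \log^+|z|\,\dd\mu_{n_i,m_i}\longrightarrow \int \log^+|z|\,\dd\mu$. The hypotheses give both $\int \log^+|z|\,\dd\mu_n\to \int \log^+|z|\,\dd\mu$ (applied to the continuous function of logarithmic growth $\log^+|z|$) and, for each fixed $n$, $\int \log^+|z|\,\dd\mu_{n,m}\to \int \log^+|z|\,\dd\mu_n$ as $m\to\infty$. I would refine the choices above by imposing two conditions simultaneously: select $n_i$ strictly increasing with
\[
d(\mu_{n_i},\mu)<\tfrac{1}{2i} \quad\text{and}\quad \Bigl|\textstyle\int \log^+|z|\,\dd\mu_{n_i}-\int \log^+|z|\,\dd\mu\Bigr|<\tfrac{1}{2i},
\]
and then select $m_i$ with
\[
d(\mu_{n_i,m_i},\mu_{n_i})<\tfrac{1}{2i} \quad\text{and}\quad \Bigl|\textstyle\int \log^+|z|\,\dd\mu_{n_i,m_i}-\int \log^+|z|\,\dd\mu_{n_i}\Bigr|<\tfrac{1}{2i}.
\]
Two applications of the triangle inequality give $d(\mu_{n_i,m_i},\mu)<1/i$ and $|\int\log^+|z|\,\dd\mu_{n_i,m_i}-\int\log^+|z|\,\dd\mu|<1/i$, so by Lemma \ref{estrecha} the diagonal sequence $\mu_{n_i,m_i}$ converges $\log$-weakly to $\mu$.

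The only non-routine ingredient is the metrizability of the weak topology on $\mathscr{P}(\C)$, which is a classical consequence of $\C$ being Polish; once this is in hand the argument is the standard $\varepsilon/2$ diagonal trick, with the only extra work being the parallel diagonalization of the real-valued sequence $\int \log^+|z|\,\dd\mu_{n,m}$ to control the logarithmic mass.
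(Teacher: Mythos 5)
Your proof is correct. Both you and the paper rely on the metrizability of the weak topology on $\mathscr{P}(\C)$ for the first claim and reduce the second claim to Lemma \ref{estrecha}, but you invoke a different clause of that lemma, and this changes the shape of the argument. The paper first applies the weak diagonal assertion as a black box to produce $\nu_i=\mu_{n_i,m_i}$ converging weakly to $\mu$, and then separately verifies clause (ii) (log-tightness of $(\nu_i)$) via an $\varepsilon/3$ estimate involving a compact $K$; that route requires splitting $\int\log^+|z|\,\dd\nu_i$ into $\int_K$ and $\int_{K^c}$ and comparing $\int_K$ against $\int_K\,\dd\mu_{n_i}$, a comparison that implicitly leans on additional control (e.g.\ choosing $K$ with $\mu(\partial K)=0$, since $\mathbf{1}_K\log^+|z|$ is not continuous). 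You instead verify clause (iii) directly, folding the control of $\int\log^+|z|$ into the diagonal selection itself by choosing $n_i$ and then $m_i$ to simultaneously make the Prokhorov distance and the $\log^+$-integral discrepancy small, and concluding with the triangle inequality. This is a bit more economical: it avoids the compact-set split and the boundary-measure technicality, and it does not need to treat the two claims of the lemma as logically sequential. Both approaches are sound and buy essentially the same thing; yours is the cleaner write-up.
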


\begin{proof}
	Weak convergence on $\mathscr{P}(\C )$ is metrizable (e.g. see \cite[Theorem 6.8]{Billingsley}, \cite[Example
        IV.22-23]{Pollard}), so the first assertion is just the diagonal argument in a  metric space. 

        Let $d_{\weak}$ be a distance in $\mathscr{P}(\C )$ whose
        associated topology is weak convergence. For $\mu ,\mu '\in
        \mathscr{P}_{\log}(\C )$, define
        \begin{displaymath}
          d_{\log^{+}}(\mu ,\mu ')=\left|\int \log^{+}|z|\,\dd \mu - \int \log^{+}|z|\,\dd \mu'\right|
        \end{displaymath}
        and $d=\max\{d_{\weak},d_{\log^{+}}\}$. Then $d$ is a distance
        in $\mathscr{P}_{\log}(\C )$
        and, by  Lemma \ref{estrecha},  its associated topology is
        log-weak convergence. Thus the second statement is again the
        diagonal argument in a  metric space.    
\end{proof}

\subsection{Subharmonic functions and $L^1_{\loc}$ convergence}\label{L1loc}
Let $f, f_n \colon \C  \longrightarrow \R $, $n\ge 0$, be functions that are
locally integrable. We say $f_n$ converges to $f$ \emph{in
  $L^1_{\loc}$}, if for any $z \in \C $, there exists an
open neighborhood $U$ of $x$ such that
\[
\lim_{n \rightarrow \infty} \int_U |f_n-f| \,\dd\lambda = 0,
\]
where $\lambda$ is the usual Lebesgue measure on the complex plane. In
this section we show that the $\log$-weak convergence of measures
corresponds exactly to $L^1_{\loc}$ convergence of
potentials. 

\begin{proposition} \label{log_convergence_IFF_potential_L^1_loc}
	Let $\mu\in\mathscr{P}_{\log}(\C )$ and $(\mu_n)$ be a sequence in $\mathscr{P}_{\log}(\C )$. Then
        $\mu_n$ converges $\log$-weakly to  $\mu$ if and only if 
        $U^{\mu_n}$ converges to $U^\mu$ in
        $L^1_{\loc}$. 
\end{proposition}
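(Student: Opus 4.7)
The plan is to use Lemma \ref{Hormander_psh_magic} to convert between $L^1_{\loc}$ and distributional convergence for subharmonic functions, together with Lemma \ref{lemm:1} to pass between distributional convergence of potentials and of measures. In each direction the argument then reduces to moving from continuous-with-logarithmic-growth test functions against $\mu_n$ to smooth compactly supported test functions against $U^{\mu_n}$, via Fubini, with the remaining gap being controlled by the behavior of $U^{\mu_n}$ at infinity.

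For the direction $\log$-weak $\Rightarrow$ $L^1_{\loc}$: fix $\varphi \in C_c^\infty(\C)$ and use Lemma \ref{Fubini} to write
\[
\int U^{\mu_n}\varphi \,\dd\lambda = \int g \,\dd \mu_n, \qquad g(w):=\int -\log|w-z|\varphi(z)\,\dd\lambda(z).
\]
Expanding $\log|w-z|=\log|w|+\log|1-z/w|$ for $|w|$ large shows that $g$ is continuous with logarithmic growth, namely $g(w)=-\log|w|\int \varphi\,\dd\lambda + O(1/|w|)$. Hence $\log$-weak convergence of $(\mu_n)$ gives $\int U^{\mu_n}\varphi\,\dd\lambda \longrightarrow \int U^{\mu}\varphi\,\dd\lambda$, i.e.\ distributional convergence of potentials, and Lemma \ref{superharmonic potential} combined with Lemma \ref{Hormander_psh_magic} upgrades this to $L^1_{\loc}$ convergence.

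For the reverse direction, Lemmas \ref{Hormander_psh_magic} and \ref{lemm:1} give distributional convergence $\mu_n \longrightarrow \mu$. Since each $\mu_n$ and $\mu$ is a probability measure, testing against smooth cutoffs $\varphi_R \in C_c^\infty(\C)$ with $\varphi_R \equiv 1$ on $B_R$ and using $\int \varphi_R\,\dd \mu \to 1$ yields tightness of $(\mu_n)$, and hence weak convergence against bounded continuous functions. By Lemma \ref{estrecha}, the remaining task is to prove
\[
\int \log^+|z|\,\dd \mu_n \longrightarrow \int \log^+|z|\,\dd \mu.
\]

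This $\log$-tightness assertion is the main obstacle and is the only step that uses more than just distributional convergence of the potentials. The key tool is the circle-average identity $\frac{1}{2\pi}\int_0^{2\pi}\log|Re^{i\theta}-w|\,\dd\theta = \max(\log R, \log|w|)$, which upon integration against $\mu_n$ yields
\[
T_n(R):= -\frac{1}{2\pi}\int_0^{2\pi} U^{\mu_n}(Re^{i\theta})\,\dd\theta - \log R = \int_{|w|>R}\log(|w|/R)\,\dd \mu_n(w).
\]
For each $n$, $T_n$ is a nonnegative decreasing function of $R$ tending to $0$ at infinity because $\mu_n \in \csP_{\log}(\C)$. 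The $L^1_{\loc}$ convergence of $(U^{\mu_n})$ translates, via polar coordinates, into convergence of integrated circle averages $\int_0^R r\, M_n(r)\,\dd r$; combined with the monotonicity of $M_n(R):=\frac{1}{2\pi}\int_0^{2\pi}U^{\mu_n}(Re^{i\theta})\,\dd\theta$ (superharmonicity of $U^{\mu_n}$), this forces $T_n(R) \longrightarrow T(R)$ at every point of continuity of the limit $T$. Since $T(R)\to 0$ as $R\to \infty$, a standard monotonicity argument, handling the finitely many small indices separately, upgrades this pointwise statement to the uniform smallness $\sup_n T_n(R)\longrightarrow 0$, which is exactly the $\log$-tightness required to conclude.
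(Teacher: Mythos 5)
Your forward direction matches the paper's: both reduce, via Lemma~\ref{Fubini}, to observing that $w\mapsto \int \varphi(z)\log|z-w|^{-1}\,\dd\lambda(z)$ is continuous with logarithmic growth, which lets $\log$-weak convergence deliver distributional convergence of potentials, upgraded to $L^1_{\loc}$ by Lemmas~\ref{superharmonic potential} and \ref{Hormander_psh_magic}.

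For the converse, the opening moves (distributional convergence of potentials $\Rightarrow$ distributional convergence of measures via Lemma~\ref{lemm:1} $\Rightarrow$ weak convergence $\Rightarrow$ reduce to $\log$-tightness via Lemma~\ref{estrecha}) are the same, but the final step is a genuinely different argument. The paper constructs a smooth subharmonic proxy $f$ for $\log^+$ with $|f-\log^+|$ bounded and $\Delta f$ smooth and compactly supported, then applies Fubini and the $L^1_{\loc}$ hypothesis directly to $\int f\,\dd\mu_n = -\int U^{\mu_n}h\,\dd\lambda$ and concludes by weak convergence of the bounded function $f-\log^+$. Your argument instead exploits the circle-average identity and the superharmonicity of $U^{\mu_n}$: it turns $L^1_{\loc}$ convergence into $L^1$ convergence of the monotone circle means $M_n$, extracts pointwise convergence at continuity points by a Helly-type monotonicity argument, and then deduces $\sup_n T_n(R)\to 0$ by a Dini-type argument. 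Both are valid; yours avoids constructing the smooth approximant but requires a bit more care (and, while you write "which is exactly the $\log$-tightness," $T_n(R)=\int_{|w|>R}\log(|w|/R)\,\dd\mu_n$ is slightly weaker than $\int_{|w|>R}\log^+|w|\,\dd\mu_n$; the elementary bound $T_n(K)\geq\frac{1}{2}\int_{|w|>K^2}\log|w|\,\dd\mu_n$ for $K\geq e$ closes this small gap, so the conclusion stands). The paper's route is more direct once the subharmonic proxy is written down; your route is more self-contained in potential theory and makes the role of superharmonicity explicit.
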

\begin{proof}
    We start by the direct implication. By Lemmas \ref{superharmonic potential} and  \ref{Hormander_psh_magic} we
        need only to show that $U^{\mu_n} $ converges to $U^\mu$ as distributions. Taking $\varphi \in C_c^\infty(\C )$, we have that
	\begin{flalign*}
		\lim_{n \rightarrow \infty} \int \varphi(z)
          U^{\mu_n}(z) \,\dd\lambda(z)
          &= \lim_{n \rightarrow \infty} \iint \varphi(z)\log|z-w|^{-1} \dd\mu_n(w) \,\dd\lambda(z) \\
		&=\lim_{n \rightarrow \infty}\iint
                  \varphi(z)\log|z-w|^{-1}\, \dd\lambda(z) \dd \mu_n(w) \\
		&= \iint \varphi(z)\log|z-w|^{-1} \,\dd\lambda(z) \dd\mu(w) \\
		&=\iint \varphi(z)\log|z-w|^{-1}\, \dd\mu(w) \dd\lambda(z) \\
		&=\int \varphi(z) U^\mu(z) \,\dd\lambda(z).
	\end{flalign*}
The second and fourth equalities are Lemma \ref{true_Fubini}. The third equality follows from the fact that the function
        \begin{displaymath}
        w\longmapsto \int \varphi(z)\log|z-w|^{-1} \dd\lambda(z) 
        \end{displaymath}
        is  continuous of  logarithmic growth and the
        hypothesis of $\log$-weak convergence. 
                
   Now we prove the reverse implication. 
   Assume that $U^{\mu_n}$ converges to $ U^\mu$ in 
    $L^1_{\loc}$. It follows that $\Delta U^{\mu_n}$ converges to
    $\Delta U^\mu$ as distributions.  By Lemma \ref{lemm:1}, we 
    conclude that $\mu_n$ converges to $\mu$ as distributions which 
    implies that $\mu_n$ converges to $\mu$ weakly.  By Lemma \ref{estrecha} it
    only remains to be proven that
    \begin{equation}\label{eq:13}
      \lim_{n\to \infty}\int \log^{+}|z|\,\dd \mu _{n}(z)=
      \int \log^{+}|z|\,\dd \mu (z).
    \end{equation}
    Let $\nu =\frac{1}{\pi }\lambda _{B_{1}}$ be the Lebesgue measure
    on the unit ball normalized so that it has total mass one. Let $U^{\nu}$
    be its potential. Then $-U^{\nu}(z)-\log^{+}|z|$ is a continuous and bounded function on $\C$. Since $\mu _{n}$ converges to
    $\mu $ weakly, equation \eqref{eq:13} is equivalent to  
    \begin{equation}\label{eq:25}
      \lim_{n\to \infty}\int U^{\nu}(z)\,\dd \mu _{n}(z)=
      \int U^{\nu}(z)\,\dd \mu (z).
    \end{equation}
Using Lemma
    \ref{Fubini} and the convergence in $L^1_{\loc}$, we
    have that 
\begin{displaymath}
\lim_{n\to \infty} \int U^{\nu}\dd\mu_n = \lim_{n\to \infty}\int
                                          U^{\mu_{n}} \,\dd \nu
                                        = \int U^{\mu} \,\dd \nu
  =\int U^{\nu}\dd\mu, 
\end{displaymath}
obtaining \eqref{eq:25}.
\end{proof}



\section{The sweetened truncation} \label{sec:log-weak-convergence}

The goal of this section is to define the sweetened truncation of a measure and
prove related technical lemmas and propositions. They
will be used in the next sections. 

\subsection{Sweetened truncation} \label{Smith_truncation_definition}
Let $\mu \in \csP_{\log}(\C )$. For any $R>1$, we want to produce a new measure $\musw_{R}$ supported on the ball
$B_{R}\coloneqq \{|z|\le R\}$ such that $\musw _{R}$ converges to $\mu$ $\log$-weakly as
$R$ goes to $\infty$. A first candidate for this purpose is the \emph{na\"ive truncation}
\begin{displaymath}
  \mu_R^\prime \coloneqq \mu|_{B_{R}} + (1-m_R) \cdot \lambda_{S_R},
\end{displaymath}
where $m_R \coloneqq \mu\left(B_{R} \right)$ and 
$\lambda_{S_R}$ is the equilibrium measure on $S_{R}\coloneqq
\big\{|z|=R\big\}$.

As we will see in Section \ref{sec:sos-equid-non-1}, we are interested
in measures that satisfy
\begin{equation}\label{eq:27}
  \int \log|Q| \,\dd\mu \geq \nobreak 0\quad \text{ for all } Q \in \Z [x]\setminus\{0\}.
\end{equation}
We want to refine the na\"ive truncation in such a way that, if $\mu $
satisfies condition \eqref{eq:27}, then the measure $\musw_{R}$ still
satisfies the same condition. The following definition is inspired by
Smith's concept of \emph{sweetened measure}, see \cite[Definition
5.5]{Smith}.
\begin{definition}\label{def truncation}
Let $\mu \in \csP_{\log}(\C )$ be a probability measure. Let $R>1$ and set
\begin{displaymath}
  T_R \coloneqq  \int_{|z| > R} \log^+|z|
        \dd\mu,\ 
  L_R \coloneqq  (1-m_R) \log2 + T_R,\
  \eta_R \coloneqq \frac{\log R}{\log R+L_R}.
\end{displaymath}
We define the \emph{sweetened truncation} $\musw_R$ as
\begin{equation}\label{truncation formula}
\musw_R \coloneqq \eta_R \cdot \mu|_{B_R} + (1-m_R\eta_R) \cdot \lambda_{S_R}.
\end{equation}
\end{definition}

We note that Jensen's formula implies that the potential of $\lambda
_{S_{R}}$ is given by
\begin{equation}
  \label{eq:28}
  U^{\lambda _{S_{R}}}(z) = -\log \max\big\{ |z|,R \big\}.
\end{equation}

\begin{proposition}\label{truncation properties} Let $\mu \in \csP_{\log}(\C )$. Then
\begin{enumerate}
\item\label{truncation approximates}
	$\musw_R$ converges to $\mu$ $\log$-weakly as $R$ goes to
        $\infty$,
\item\label{truncation conjugation} for all $R>1$, if $\mu $ is
  conjugation invariant, then $\musw_R$ is conjugation invariant as
  well,
\item  \label{Smith_truncation_keeps_Smith_conditions}
for all $R>1$, the inequality $U^{\musw_R} \leq \eta_R U^\mu$ holds. In
particular, if $ \int \log|Q| \,\dd\mu \geq \nobreak 0$ for some $Q
\in \Z [x]\setminus\{0\}$, then $ \int \log|Q| \,\dd\musw_R \geq
\nobreak 0$ as well. 
\end{enumerate}
\end{proposition}

We will prove this statement after the following lemma.

\begin{lemma} \label{hangover_estimate}
	The na\"ive truncation  $\mu_R^\prime$ satisfies $U^{\mu_{R}^\prime} \leq U^\mu + L_R$.
\end{lemma}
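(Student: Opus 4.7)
The plan is to write out $U^{\mu_R'} - U^\mu$ explicitly and control the remainder with the elementary inequality $|w-z|\le (1+|w|)(1+|z|)$.

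First I would split
\begin{equation*}
U^{\mu_R'}(z)-U^\mu(z) \;=\; (1-m_R)\,U^{\lambda_{S_R}}(z) \;+\; \int_{|w|>R} \log|w-z|\,\dd\mu(w),
\end{equation*}
which is immediate from the definition of $\mu_R'$ and the identity $-\log|w-z|=\log|w-z|^{-1}$. The potential of the uniform probability measure on the circle of radius $R$ is classical:
\begin{equation*}
U^{\lambda_{S_R}}(z)= -\log\max(|z|,R),
\end{equation*}
so the first summand equals $-(1-m_R)\log\max(|z|,R)$.

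Next I would bound the second summand. Using $|w-z|\le 1+|w|+|z|\le (1+|w|)(1+|z|)$ and $\log(1+t)\le \log 2+\log^+ t$, I get the pointwise estimate
\begin{equation*}
\log|w-z|\;\le\; 2\log 2+\log^+|w|+\log^+|z|.
\end{equation*}
Integrating over $\{|w|>R\}$ against $\mu$, and recalling that $\mu(\{|w|>R\})=1-m_R$ and $\int_{|w|>R}\log^+|w|\,\dd\mu= T_R$, this yields
\begin{equation*}
\int_{|w|>R} \log|w-z|\,\dd\mu(w)\;\le\; 2(1-m_R)\log 2+T_R+(1-m_R)\log^+|z|.
\end{equation*}

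Combining the two summands gives
\begin{equation*}
U^{\mu_R'}(z)-U^\mu(z)\;\le\; (1-m_R)\bigl(\log^+|z|-\log\max(|z|,R)\bigr)+L_R,
\end{equation*}
so it remains to check that the bracketed term is non-positive. If $|z|\le R$, then $\log^+|z|\le\log R=\log\max(|z|,R)$; if $|z|>R>1$, then $\log^+|z|=\log|z|=\log\max(|z|,R)$. In either case the bracket is $\le 0$, and the lemma follows. No step is delicate; the only mild subtlety is choosing a triangle-type bound symmetric in $w$ and $z$ so that the bad $\log^+|z|$ piece cancels against $-\log\max(|z|,R)$ uniformly in $z$, which is why $(1+|w|)(1+|z|)$ is the convenient form to use.
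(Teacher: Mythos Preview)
Your proof is correct. The decomposition of $U^{\mu_R'}-U^\mu$ is the same as the paper's, but from there the arguments diverge: the paper splits into the cases $|z|<R$ and $|z|\ge R$, using the case-specific bounds $|z-w|\le 2|w|$ and $|1-w/z|\le 2\max(|w|/R,1)$ respectively, whereas you use the single symmetric estimate $|w-z|\le(1+|w|)(1+|z|)$ throughout and then cancel the resulting $(1-m_R)\log^+|z|$ against $-(1-m_R)\log\max(|z|,R)$. Your route is a bit slicker and avoids the case analysis; the paper's route gives marginally sharper intermediate constants (in the case $|z|<R$ it actually obtains $(1-m_R)\log 2+T_R$ rather than the full $L_R$), though this plays no role since only the bound $L_R$ is needed downstream.

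One harmless nit: you write $\int_{|w|>R}\log^+|w|\,\dd\mu = T_R$, but the paper defines $T_R$ with $|w|\ge R$, so strictly you only have $\le T_R$; the inequality goes the right way and nothing changes.
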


\begin{proof}
	Assume $R>1$. By equation \eqref{eq:28}, 
	\[
	U^{\mu_R^\prime}(z)-U^{\mu}(z) = \int_{|w|>R} \log |z-w| \,\dd\mu(w)-(1-m_R)\log \max\big\{|z|,R\big\}.
	\]

Assume $|z|<R$. Then for all $w$ with $|w|>R$, we have $|z-w| \leq |z|+|w| \leq 2|w|$ and hence
\begin{align*}
U^{\mu^\prime_R}(z)-U^{\mu}(z)&=\int_{|w|> R}\log |z-w| \,\dd \mu(w)-(1-m_R)\log R\\
&\leq \int_{|w|> R}\log |2w| \,\dd \mu(w) \\
&\leq L_R.
\end{align*}

Assume now $|z| \geq R$. Then,
\begin{align*}
U^{\mu_R^\prime}(z)-U^{\mu}(z)&=\int_{|w|> R}\log |z-w| \,\dd \mu(w)-(1-m_R)\log |z|\\
&= \int_{|w|> R}\log \left|1-\frac{w}{z}\right| \,\dd\mu(w)\\
&=\int_{|w|>|z| \geq R}\log \left|1-\frac{w}{z}\right| \,\dd \mu(w) + \int_{|z|\geq |w|>R}\log \left|1-\frac{w}{z}\right|\,\dd \mu(w)\\
&\leq \int_{|w|>|z|\geq R} \log \left|\frac{2w}{R}\right| \,\dd\mu(w)
  + \int_{|z|\geq |w|>R} \log 2 \,\dd\mu(w)\\
&=  \int_{|w|>|z|\geq R} \log \left|\frac{w}{R}\right| \,\dd\mu(w)
  + \int_{|w|>R} \log 2 \,\dd\mu(w)\\
&\leq \int_{|w|> R} \log |w| \,\dd \mu(w) + \int_{|w|>R} \log 2 \,\dd \mu(w)\\
&\le L_R.
\end{align*}
\end{proof}

\begin{proof}[Proof of Proposition \ref{truncation properties}] Item
  \eqref{truncation conjugation} follows readily from the
  definitions. Now we  show \eqref{truncation approximates}. It is
  easy to see from \eqref{truncation formula}  that $\musw_R$
  converges to $\mu$ weakly as $R$ goes to $\infty$. So by Lemma
  \ref{estrecha}, it suffices to test against $\log^+|z|$: 
	\begin{align*}
		\int &\log^+|z| \,\dd\musw_R - \int \log^+|z| \,\dd\mu \\
		&= (1-m_R\eta_R) \log R + (\eta_R-1) \int_{|z| \leq R}
                  \log^+|z| \,\dd\mu - \int_{|z| > R} \log^+|z|
                  \,\dd\mu.
	\end{align*}
	Clearly the last two terms vanish when
        $R$ goes to $\infty$. With regards to the first term, we
        have that  
\begin{equation}\label{otro}
(1-m_R\eta_R) \log R = \big( (1-m_R)\log R +L_R\big) \frac{\log R}{\log R+L_R}.
\end{equation}
Then, the estimate 
$0\leq (1-m_R)\log R\leq T_R$ and the facts that $T_R$ and $L_R$
are positive and converge to zero when $R$ goes to $\infty$ imply
that $(1-m_R\eta_R) \log R$ also converges to zero. 

Finally we prove \eqref{Smith_truncation_keeps_Smith_conditions}.	For the first assertion, we use Lemma \ref{hangover_estimate}
\begin{flalign*}
	U^{\musw_R} &= \eta_R U^{\mu_R^\prime} +(1-\eta_R) U^{\lambda_{S_R}} \\
	&\leq \eta_R U^{\mu} + \eta_R L_R - (1-\eta_R) \log R  \\
	&\leq \eta_R U^{\mu}.
\end{flalign*}

For the second one, let $a \in \Z $ be the leading coefficient of $Q$
and $\alpha _1,\dots,\alpha _n$ be the roots of $Q$. Then 
\begin{flalign*}
	\int \log|Q| \,\dd\musw_R &= \log |a| - \sum U^{\musw_R} (\alpha _i) \\
	& \geq \eta_R \log|a| - \eta_R \sum U^{\mu} (\alpha _i) \\
	& \geq \eta_R \int \log|Q| \,\dd\mu \\
	& \geq 0.
\end{flalign*}
\end{proof}

\subsection{Sweetened truncation and asymptotically logarithmic functions}
\label{sec:trunc-funct-with}
\begin{definition}\label{log singular}
A function $\ginf \colon \C  \longrightarrow \R $ is called
\emph{asymptotically logarithmic at infinity}, if it obeys the asymptotic
	\[
	\ginf(z) = \log|z| + o(\log|z|), \quad \text{as } |z| \longrightarrow \infty.
	\]
A \emph{Green function} is a function $\ginf \colon \C  \longrightarrow \R $ which is continuous, invariant under complex conjugation and asymptotically logarithmic at infinity. Such Green functions arise when considering singular metrics on
$\mathcal{O}(1)$, see Section \ref{Arakelov_theory_on_P^1} for a discussion.
\end{definition}

The purpose of this section is to prove the following statement that
will play an essential role in the proof of strong duality in Section \ref{section_strong_duality}.

\begin{proposition}\label{truncation-log infinity}
Let $\ginf \colon \C \longrightarrow \R$ be a continuous function that
is asymptotically logarithmic at infinity. Let $(\mu_n)$ be a sequence in $\mathscr{P}(\mathbb{C})$ such that 
\begin{displaymath}
  \limsup_{n\to \infty} \int \ginf \,\dd\mu_n<\infty.
\end{displaymath}
Then, for all $\varepsilon>0$ there exists $R_0>1$,
such that for all $R\ge R_{0}$
\begin{equation}\label{control}
\limsup_{n\to \infty} \int \ginf \,\dd\musw_{n,R} \leq \limsup_{n\to
  \infty} \int \ginf \,\dd\mu_{n} +\varepsilon.
\end{equation} 
\end{proposition}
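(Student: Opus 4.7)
The plan is to expand $\int \ginf \, \dd\musw_{n,R}$ using Definition \ref{def truncation}, subtract $\int \ginf \, \dd\mu_n$, and bound the difference by a quantity that vanishes as $R\to\infty$ after taking $\limsup$ in $n$. The specific form of $L_{n,R}$ in the sweetened truncation is designed so that the leading $T_{n,R}$-contributions cancel in this comparison.

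First I would use the asymptotic $\ginf(z) = \log|z|+o(\log|z|)$: for any $\delta \in (0,1)$, choose $R_0>1$ with $(1-\delta)\log|z| \leq \ginf(z) \leq (1+\delta)\log|z|$ for $|z|\geq R_0$, and note $\ginf \geq -C$ on $\C$ by continuity for some $C \geq 0$. Splitting $\int \ginf\,\dd\mu_n = I_n^R + J_n^R$ with $J_n^R = \int_{|z|>R}\ginf\,\dd\mu_n \geq 0$ and $I_n^R \geq -C$, the hypothesis makes both uniformly bounded for large $n$ and $R \geq R_0$. From $\ginf \geq (1-\delta)\log|z|$ on $\{|z|>R\}$ one obtains $T_{n,R} \leq J_n^R/(1-\delta)$ and thereby uniform bounds on $L_{n,R}$, with $1-m_{n,R} \leq T_{n,R}/\log R$ and $\eta_{n,R} \to 1$ uniformly as $R\to \infty$.

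Writing $\mathcal{L}_R := \int \ginf\,\dd\lambda_{S_R} = \log R + o(\log R)$ (by the asymptotic applied uniformly on the circle of radius $R$), the difference expands to
\[
\int \ginf\,\dd\musw_{n,R} - \int \ginf\,\dd\mu_n = -(1-\eta_{n,R})I_n^R + (1-m_{n,R}\eta_{n,R})\mathcal{L}_R - J_n^R.
\]
The first summand is at most $(L_{n,R}/\log R)\,|I_n^R|$ and vanishes uniformly. For the remaining two summands, I would substitute
$(1-m_{n,R}\eta_{n,R}) = \bigl((1-m_{n,R})\log R + L_{n,R}\bigr)/(\log R+L_{n,R})$,
use $L_{n,R} = T_{n,R}+2(1-m_{n,R})\log 2$ to rewrite the numerator as $(1-m_{n,R})\log(4R)+T_{n,R}$, and apply $|J_n^R - T_{n,R}| \leq \delta T_{n,R}$: after direct algebraic manipulation, the $T_{n,R}$-contributions cancel and one finds a bound of the form
\[
(1-m_{n,R}\eta_{n,R})\mathcal{L}_R - J_n^R \;\leq\; (1-m_{n,R})\log(4R)\,\eta_{n,R} + \delta\,T_{n,R} + (1-m_{n,R}\eta_{n,R})\,|\mathcal{L}_R-\log R|.
\]

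Taking $\limsup_n$, then letting $R\to\infty$ and $\delta \to 0$, the three residual terms can each be made smaller than $\varepsilon/3$: the third vanishes as $|\mathcal{L}_R-\log R|/\log R = o(1)$, the second is controlled by $\delta$ times a uniform bound on $T_{n,R}$, and the first is handled via the tightness-type consequence of the hypothesis combined with the derived bound $(1-m_{n,R})\log R \leq T_{n,R}$. The main technical obstacle is carrying out the algebraic cancellation in the central step: the precise form of $L_{n,R}$, with its $2(1-m_{n,R})\log 2$ correction inherited from Smith's sweetened measure, is exactly what eliminates the leading-order contributions of size $T_{n,R}$; without this correction an uncancelled $T_{n,R}$-term would remain and the bound would not suffice. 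The asymptotically-logarithmic hypothesis on $\ginf$ is used twice in this argument—once to approximate $J_n^R$ by $T_{n,R}$ and once to approximate $\mathcal{L}_R$ by $\log R$—and both approximations are essential for the cancellation to go through.
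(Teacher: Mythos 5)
Your algebraic manipulations are correct and closely parallel the paper's own computation: the identity $(1-m_{n,R}\eta_{n,R})\log R = \eta_{n,R}\big((1-m_{n,R})\log(4R)+T_{n,R}\big)$, the cancellation of the $T_{n,R}$-contribution against $J_n^R$ up to a $\delta T_{n,R}$ error, and the uniform bound $(1-m_{n,R}\eta_{n,R})\log R = O(1)$ used to control the $|\mathcal{L}_R - \log R|$ remainder all check out, and they are essentially the paper's Claims 2--5 in a different packaging.

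The gap is in the treatment of the term $\eta_{n,R}(1-m_{n,R})\log(4R)$. You assert that it is ``handled via the tightness-type consequence of the hypothesis combined with the derived bound $(1-m_{n,R})\log R\leq T_{n,R}$.'' But that bound, together with the uniform bound on $T_{n,R}$, only gives \emph{boundedness} of $(1-m_{n,R})\log R$, not that $\limsup_{R\to\infty}\limsup_{n}\,(1-m_{n,R})\log R = 0$. Indeed the hypothesis $\sup_n\int\log^+|z|\,\dd\mu_n<\infty$ alone is not enough to conclude this: one can arrange $\mu_n = (1-p_n)\delta_0 + p_n\delta_{r_n}$ with $p_n\log r_n$ constant and $r_n$ densely and repeatedly covering $(e,\infty)$, so that for every $R>e$ infinitely many $n$ have $r_n\in(R,R^2)$, whence $\limsup_n\mu_n\{|z|>R\}\log R$ is bounded away from zero uniformly in $R$. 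This is exactly why the paper does \emph{not} argue directly from the $T_{n,R}$-bound: it first invokes Prohorov's theorem to pass to a weakly convergent subsequence $\mu_n\to\mu_\infty$, and then uses the Portmanteau theorem together with $\mu_\infty\in\mathscr{P}_{\log}(\C)$ to get
\[
\limsup_n\,\mu_n\{|z|\geq R\}\log R \;\leq\; \mu_\infty\{|z|\geq R\}\log R \;\leq\; \int_{|z|\geq R}\log^+|z|\,\dd\mu_\infty \;\longrightarrow\; 0.
\]
This Prohorov--Portmanteau step (the paper's Claim~1) is the single non-algebraic ingredient of the proof and must be made explicit; without passing through a weak limit the term $(1-m_{n,R})\log(4R)$ simply does not go to zero, and your ``tightness-type consequence'' is too vague to supply it. Everything else in your proposal is sound and matches the paper's strategy.
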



\begin{proof}
The hypotheses imply 
\begin{displaymath}
	\limsup_{n \rightarrow \infty} \int \log^+|z| \,\dd\mu_n <\infty,
\end{displaymath}
so we may assume that
\begin{equation}{\label{log tight}}
	\sup_{n\in \N} \int \log^+|z| \,\dd\mu_n <\infty.
\end{equation}

Since $1=o(\log|z|)$ when $|z| \longrightarrow \infty$, we have
$( \mu_n )$ is tight by Lemma \ref{tight_transition}. Using
Prohorov's theorem \cite[Theorem 5.1]{Billingsley},   we can assume,
after taking a subsequence if necessary, that there is a probability measure
$\mu_\infty$ on $\C$ such that $\mu_n $ converges weakly to
$\mu_\infty$. In particular, we know that $\mu_\infty$ can integrate 
$\log^+|z|$, because
\begin{equation}\label{log integrable}
\int \log^+|z| \,\dd\mu_\infty \leq \limsup_{n\to \infty} \int
\log^+|z| \,\dd\mu_n < \infty.
\end{equation}

Set $E_{n,R}\coloneqq\int \ginf \,\dd\musw_{n,R} - \int \ginf
\,\dd\mu_n$. In order to prove \eqref{control} it is enough to show
that  
\begin{equation}\label{goal1}
\limsup_{R \rightarrow \infty} \limsup_{n \rightarrow \infty} E_{n,R} \leq 0
\end{equation}

Before the proof of \eqref{goal1} we show several auxiliary results.
First we have that 
\begin{equation}
  \label{eq:21}
  \lim_{R \rightarrow \infty}\limsup_{n\rightarrow \infty} \mu_n\{|z|\geq R\} \log R=0.
\end{equation}
Indeed,  
\begin{align*}
	0&\leq \limsup_{n \rightarrow \infty} \mu_n \big\{ |z| \geq R\big\} \log R \\
	&\leq  \mu_\infty\big\{|z| \geq R\big\} \log R \\
	&\leq  \int_{|z|\geq R} \log^+|z| \,\dd\mu_{\infty}.
\end{align*}
Here the second inequality follows from the Portmanteau theorem
\cite[Theorem 2.1 (iii)]{Billingsley}, because $\mu_n$ converges
weakly to $ \mu_\infty$ and the set $\big\{ |z| \geq R \big\}$ is
closed. Since \eqref{log integrable} implies that the last term
vanishes when $R\longrightarrow \infty$, this proves the claim.

Let $L_{n,R}$ and $T_{n,R}$ be the constants appearing in Definition
\ref{def truncation} for the sweetened truncation of $\mu _{n}$, $n\ge 1$. 
There exists $M>0$ such that 
\begin{equation}\label{eq:22}
  \big\{ L_{n,R}, T_{n,R} \mid n\geq 1, R>1 \big\}\subseteq [0,M].
\end{equation}
We see directly from the definitions  that  $L_{n,R}, T_{n,R}\geq
0$. Also, $T_{n,R}$ is bounded because of \eqref{log tight}. Since
  $L_{n,R}\le T_{n,R}+\log 2$ it is bounded as well.

We have that
\begin{equation}\label{eq:23}
  \limsup_{R \rightarrow \infty} \limsup_{n \rightarrow \infty} \left( (1-m_{n,R}\eta_{n,R}) \log R - T_{n,R}\right)\leq0.
\end{equation}

Indeed,  since $\log R$ and $ L_{n,R}$ are non negative,  we see that 
\begin{align*}
(1-m_{n,R}\eta_{n,R}) \log R - T_{n,R}  &= \frac{\log R}{\log R+L_{n,R}}\cdot\left((1-m_{n,R})\log (2R) +T_{n,R} \right) -T_{n,R}\\
&\leq \big((1-m_{n,R})\log (2R) +T_{n,R} \big) -T_{n,R}\\
                                        &\le (1-m_{n,R})\log (2R)\\
  &\le \mu_n \big\{ |z| \geq R\big\} \log (2R)
\end{align*}
We conclude by \eqref{eq:21}.

We next observe that there exists $M'>0$ such that for all $n\geq 1$
and $R>0$ 
\begin{equation}
  \label{eq:24}
  (1-m_{n,R}\eta_{n,R}) \log R \leq M'
\end{equation}
Holds. This follows from \eqref{eq:23} and \eqref{eq:22}.

We have the estimate 
\begin{equation}
  \label{eq:26}
  0\leq 1-\eta_{n,R}\leq \frac{M}{\log R}.
\end{equation}
That follows from $1-\eta_{n,R}=\frac{L_{n,R}}{\log R + L_{n,R}}$ and \eqref{eq:22}.

Write $\ginf = \log^+|z| + \psi $, where $\psi: \mathbb{C} \longrightarrow \mathbb{R}$ is continuous and $\psi=o(\log|z|)$ at infinity. Note
that the sequence $(\int \psi  \,\dd\mu_n)$ is bounded because of \eqref{log tight}.

We now put together all the previous results.
 Let $\varepsilon>0$. Since $\psi =o(\log |z|)$, there exists $R_1>1$ such
 that for all $z$ with $|z|\geq R_0$, we have that $\psi (z)\leq
 \frac{\varepsilon}{M'} \log |z|$. For $R\geq R_1$ and using the
 estimate \eqref{eq:24}, we have  
\begin{align*}
E_{n,R} =&(1-m_{n,R}\eta_{n,R})\int \ginf \,\dd\lambda_{S_R}  -
           \int_{|z|> R}\ginf \,\dd\mu_n - (1-\eta_{n,R})\int_{|z|\leq
           R}\ginf \,\dd\mu_n  \\ 
  =& \big((1-m_{n,R}\eta_{n,R}) \log R  - T_{n,R} \big)- (1-\eta_{n,R}) \int_{|z| \leq R} \log^+|z| \,\dd\mu_n  \\
	& \quad \quad   + (1-m_{n,R}\eta_{n,R}) \int \psi  \,\dd\lambda_{S_R}- \int_{|z|> R} \psi  \,\dd\mu_n  - (1-\eta_{n,R}) \int_{|z| \leq R} \psi  \,\dd\mu_n.\\
	\leq & \big((1-m_{n,R}\eta_{n,R}) \log R  - T_{n,R} \big) + \varepsilon    - \int_{|z|> R} \psi  \,\dd\mu_n  - (1-\eta_{n,R}) \int_{|z| \leq R} \psi  \,\dd\mu_n.\\
	=&  \big((1-m_{n,R}\eta_{n,R}) \log R  - T_{n,R} \big) +
           \varepsilon -\eta_{n,R} \int_{|z|> R} \psi \,\dd\mu_n  - (1-\eta_{n,R}) \int \psi  \,\dd\mu_n .
\end{align*}
By Lemma \ref{tight_transition} we know that $(\mu_n)$ is
$\psi $-tight. Combining this information with the boundedness of $(\int \psi 
\,\dd\mu_n)$ and the estimates \eqref{eq:23} and \eqref{eq:26}, we obtain
\begin{displaymath}
  \limsup_{R\rightarrow \infty} \limsup_{n\rightarrow \infty} E_{n,R}\leq \varepsilon.
\end{displaymath}
Since $\varepsilon>0$ is arbitrary, this shows \eqref{goal1}.
\end{proof}


\section{Approximating measures by algebraic integers and by capacity one compact sets} \label{sec:sos-equid-non-1}

\subsection{Approximating measures by algebraic integers in the non-compact setting}

The aim of this section is to prove the following  result.

\begin{theorem} \label{SOS non compact} Let
  $\mu \in \mathscr{P}_{\log}(\C )$. Then
  $\mu \in \mathscr{P}^{\overline{\Z }}_{\log}(\C )$ if and only if it
  is invariant under the complex conjugation and
        \begin{equation}\label{S condition}
        \int \log|Q| \,\dd\mu \geq
        0 \quad  \textrm{ for all } Q \in \Z [x]
        \setminus \{0\}.
                  \end{equation}
\end{theorem}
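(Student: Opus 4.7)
The proof splits into two implications. The easy direction ($\Rightarrow$) is a Smyth-style integrality argument. Suppose $(\alpha_n) \subset \overline{\Z}$ is a sequence of distinct algebraic integers with $\delta_{O(\alpha_n)} \longrightarrow \mu$ $\log$-weakly, and fix $Q \in \Z[x]$. Since the $\alpha_n$ are distinct, only finitely many of them are roots of $Q$; after discarding these, the minimal polynomial $P_{\alpha_n}$ is monic (because $\alpha_n \in \overline{\Z}$) and coprime to $Q$, hence $\prod_{\beta \in O(\alpha_n)} Q(\beta) = \pm \operatorname{Res}(P_{\alpha_n}, Q) \in \Z \setminus \{0\}$. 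Dividing by $\#O(\alpha_n)$ gives $\int \log|Q| \,\dd \delta_{O(\alpha_n)} \geq 0$. Lemma \ref{monotone} then yields $\int \log|Q| \,\dd\mu \geq \liminf_n \int \log|Q| \,\dd\delta_{O(\alpha_n)} \geq 0$.

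The reverse direction ($\Leftarrow$) is the heart of the statement, and the plan is to reduce to the compactly supported case handled by Theorem \ref{thm:2} via the sweetened truncation of Section \ref{sec:log-weak-convergence}. Given a conjugation-invariant $\mu \in \mathscr{P}_{\log}(\C)$ satisfying \eqref{S condition}, form, for each $R > 1$, the measure $\musw_R$ of Definition \ref{def truncation}. By Proposition \ref{truncation properties}, $\musw_R$ is a conjugation-invariant probability measure supported in $\{|z| \leq R\}$, it still satisfies $\int \log|Q| \,\dd\musw_R \geq 0$ for every $Q \in \Z[x]$, and $\musw_R \longrightarrow \mu$ $\log$-weakly as $R \to \infty$. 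Applying the compactly supported case (Theorem \ref{thm:2}) to each $\musw_R$ furnishes a sequence $(\alpha_n^R)_n \subseteq \overline{\Z}$ of distinct algebraic integers such that $\delta_{O(\alpha_n^R)} \longrightarrow \musw_R$ properly; since proper convergence with uniformly compact supports implies $\log$-weak convergence (Lemma \ref{estrecha}), the convergence in fact takes place in $\csP_{\log}(\C)$.

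I would then assemble the final sequence by applying the $\log$-weak diagonal extraction Lemma \ref{diagonal_argument} to the double sequence $(\delta_{O(\alpha_n^R)})$ with outer index $R$ along some $R_k \to \infty$ and inner index $n$, producing indices $(k_i, n_i)$ such that $\delta_{O(\alpha_{n_i}^{R_{k_i}})} \longrightarrow \mu$ $\log$-weakly. The one bookkeeping issue is that the definition of $\mathscr{P}^{\overline{\Z}}_{\log}(\C)$ requires the approximating $\alpha_{n_i}^{R_{k_i}}$ to be \emph{distinct}. This is arranged by invoking Remark \ref{degree_unbounded}: within each fixed $R_k$, $\deg(\alpha_n^{R_k}) \to \infty$ as $n \to \infty$, so one can thin the diagonal so that successive chosen algebraic integers have strictly increasing degree, which forces distinctness.

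I expect the main obstacle to be conceptual rather than technical: all the delicate analytic work is hidden in the construction of the sweetened truncation $\musw_R$, whose design in Section \ref{sec:log-weak-convergence} was precisely tailored to simultaneously achieve compact support, preservation of every Smith inequality \eqref{S condition}, and $\log$-weak convergence to $\mu$. Once this triple of properties is available, the reverse implication reduces mechanically to the compact case plus a diagonal argument, and so the theorem follows.
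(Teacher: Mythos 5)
Your proposal is correct and follows essentially the same route as the paper: the forward direction is the same resultant/integrality argument combined with Lemma \ref{monotone}, and the reverse direction uses the sweetened truncation to reduce to Theorem \ref{thm:2} and then Lemma \ref{diagonal_argument} to pass to the limit. Your additional remark on arranging distinctness via Remark \ref{degree_unbounded} is a small bookkeeping point the paper leaves implicit, but it does not change the structure of the argument.
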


Combining this result  with Theorem \ref{thm:2} we readily obtain the following consequence for measures with compact support.

\begin{corollary}
Let $\mu\in \mathscr{P}_c(\C )$. Then, the following are equivalent:
	\begin{enumerate}
		\item there exists a sequence of
                  distinct algebraic integers $(\alpha _n ) $ such that $\delta_{O(\alpha _n)}$ converges properly
                  to  $ \mu$;
		\item there exists a sequence of
                  distinct algebraic integers $(\alpha _n ) $ such that $\delta_{O(\alpha _n)}$ $\log$-weakly converges 
                  to  $ \mu$.
		
	\end{enumerate}
\end{corollary}

 Hence Theorem \ref{SOS non compact}
  can be considered as an extension of the Smith--Orloski--Sardari
  theorem to non-compactly supported measures that can integrate
  functions of logarithmic growth. This further justifies our claim that the
  notion of $\log$-weak convergence is the right one for non-compactly
  supported measures.

  On the other hand, weak convergence is not appropriate in this
  setting because any probability measure that is invariant under the
  complex conjugation is the weak limit of a sequence of Galois orbits
  of distinct algebraic integers.  In particular such weak limits do not necessarily satisfy
  condition \eqref{S condition}, see Section~\ref{Nuno} for an example
  in this direction.

\begin{remark}
  If $(\alpha _n )$ is a sequence of distinct algebraic integers such
  that $O(\alpha _n)$ converges $\log$-weakly to $\mu$ then
  $\deg(\alpha _n)$ converges to $\infty$, as it can be seen using the
  argument in Remark~\ref{degree_unbounded}.
\end{remark}

\begin{proof}[Proof of Theorem \ref{SOS non compact}]
  First assume that $\mu \in \csP^{\overline{\Z}}_{\log}(\C)$. Then
  this measure is clearly invariant under the complex conjugation, and
  we next prove that it satisfies condition~\eqref{S condition} by
  adapting to our setting a classical argument appearing for instance in 
  \cite[Lemma~1.3.4]{Serre19}.  Let $(\alpha _n)$ be a sequence of
  distinct algebraic integers such that $O(\alpha _{n})$ converges
  $\log$-weakly to $\mu $ and denote by $P_n(x) \in \Z[x]$ the minimal
  polynomial of $\alpha _n$. Let $Q\in \Z[x]$ be a nonzero
  polynomial. By Lemma \ref{monotone} we have that
\begin{displaymath}
  \int \log |Q|\dd \mu \geq \liminf_n  \frac{1}{\deg (\alpha
    _n)}\sum_{\beta \in O(\alpha _n)} \log |Q(\beta )|.
\end{displaymath}
Let $\Res(P_n,Q)$ be the resultant of $P_n$ and $Q$.  We have that
$\Res(P_n,Q) =\prod_{\beta \in O(\alpha _n)}Q(\beta
  )$ because $P_n$ is monic
\cite[Section B.1.13]{BombieriGubler}. Hence
\begin{displaymath}
  \sum_{\beta \in O(\alpha _n)}  \log
  |Q(\beta )|=\log |\Res(P_n,Q)|\geq 0 \quad \text{ for all } n\gg 0,
\end{displaymath}
the last inequality being due to the fact that
$\operatorname{Res}(P_n,Q)$ is a nonzero integer when $n$ is
big~enough.

Conversely, let $\mu \in \mathscr{P}_{\log}(\C )$ invariant under the
complex conjugation and satisfying condition \eqref{S condition}. For
$R>1$ consider the sweetened truncation $\musw_R$ (Definition \ref{def
  truncation}). By Proposition~\ref{truncation
  properties}(\ref{Smith_truncation_keeps_Smith_conditions},\ref{truncation
  conjugation}) we can apply Theorem \ref{thm:2} to $\musw_R$ to
conclude that it can be approximated properly (and in particular
$\log$-weakly) by a sequence of Galois orbits of distinct algebraic
integers. Then using Proposition~\ref{truncation
  properties}\eqref{truncation approximates} and Lemma
\ref{diagonal_argument} we can find a diagonal subsequence that
converges to $\mu$ $\log$-weakly, as desired.
\end{proof}





This result shows that $\mathscr{P}^{\overline{\Z}}_{\log}(\C )$ is a
convex subset of $\mathscr{P}_{\log}(\C )$.  We next construct an
explicit family of measures that form a countable dense subset
it. To this end, for each pair
$P,Q$ of nonconstant  coprime polynomials with integer
coefficients we consider the~map
\begin{displaymath}
  \varphi_{P,Q}\colon \PP^1(\mathbb{C}) \longrightarrow \mathbb{P}^1(\mathbb{C})
\end{displaymath}
induced by the rational function $ P^{\deg(Q)+1}/Q^{\deg(P)}$.  It is
finite of degree ${\deg(P) (\deg(Q)+1)}$.

Given a function $f \colon \PP^1(\mathbb{C}) \longrightarrow \R$ we
consider its \emph{pushforward}
$(\varphi_{P,Q})_*f \colon \PP^1(\mathbb{C}) \longrightarrow \R$
defined as
\begin{displaymath}
(\varphi_{P,Q})_*f(z)=\sum_{w\in \varphi_{P,Q}^{-1}(z)} e_w f(w)
\end{displaymath}
with $e_w$ the ramification index of $\varphi_{P,Q}$ at a point $w$.
If $f$ is continuous then this is also the case for its pushforward,
by the continuity of the roots of polynomials with respect to
variations of their coefficients. Hence
for a measure~$\mu$ on $\mathbb{P}^{1}(\C)$ we can define its
\emph{pullback measure} $\varphi_{P,Q}^*\mu$ on $\mathbb{P}^{1}(\C)$
by the rule
\begin{displaymath}
  \int f \, \dd(\varphi_{P,Q}^*\mu) := \int (\varphi_{P,Q})_* f \, \dd \mu \quad \text{ for every continuous }
  f \colon \PP^1(\mathbb{C}) \longrightarrow \R.
\end{displaymath}
The supports of these measures are related by
$\supp(\varphi_{P,Q}^{*} \mu)= \varphi_{P,Q}^{-1}(\supp(\mu))$.  We have that
$\varphi_{P,Q}(\infty)=\infty$, and so if the measure $\mu$ is supported on
$\mathbb{C}\simeq \mathbb{P}^{1}(\mathbb{C})\setminus \{\infty\}$ then
this is also the case for its pullback.

Finally we consider the
measure on $\mathbb{C}$ defined as
\begin{equation}
  \label{eq:31}
  \mu_{P,Q} = \frac{(\varphi_{P,Q})^* \lambda_{S_{1}}}{\deg(P) (\deg(Q)+1)}
\end{equation}
with $\lambda_{S_{1}}$ the equilibrium measure of the unit circle.

\begin{proposition}
  \label{prop:6}
We have that  $\mu_{P,Q}$ is a probability measure supported 
 on the compact subset
\begin{equation}\label{compact set}
\left\{ z \,\middle|\,  |P(z)|^{\deg(Q)+1}=|Q(z)|^{\deg(P)} \right\} 
\end{equation}
with potential function $\displaystyle{U^{\mu_{P,Q}} = -\max \left\{ \frac{\log|P|}{\deg(P)}, \frac{\log|Q|}{\deg(Q)+1} \right\}}$.
\end{proposition}

\begin{proof}
  The support of $\mu_{P,Q}$ is the preimage of $S_{1}$ with respect
  to the map $\varphi_{P,Q}$, which coincides with \eqref{compact
    set}. Next set for short $d=\deg(P)$ and $e=\deg(Q)$. Then for each
  $z\in \C$~we~have
  \begin{equation}
    \label{eq:14}
    U^{\mu_{P,Q}}(z)=\int \log |z-w|^{-1}  \dd \mu_{P,Q}(w)
    = \frac{-1}{d\, (e+1)} \int \sum_{\varphi_{P,Q}(w)=y} e_w\log|z-w| \, \dd \lambda_{S_{1}}(y).
  \end{equation}
  We have
  \begin{displaymath}
    \sum_{w\in \varphi_{P,Q}^{-1}(z)} e_w\log|z-w| = \log \Bigg|  \prod_{w\in \varphi_{P,Q}^{-1}(z)} (z-w)^{e_w}\Bigg|= \log |P(z)^{e+1}-y\, Q(z)^{d}|
  \end{displaymath}
  and so the formula for the potential follows from \eqref{eq:14}
  together with Jensen's formula.
\end{proof}

  \begin{theorem} \label{mu_PQ_construction} For all  $P,Q \in \mathbb{Z}[X]$, with $P$ non constant, and $P,Q$ having no common roots, we have
    that $\mu_{P,Q} \in \mathscr{P}^{\overline{\Z}}_{\log}(\C
    )$. Moreover the family
  \begin{equation}
    \label{eq:29}
    \{ \mu_{P,Q} \, |\ P,Q \in \mathbb{Z}[X]\setminus \Z \text{ with } P\ne Q \text{ monic and irreducible}\}
  \end{equation}
  is a countable dense subset of
  $\mathscr{P}^{\overline{\Z}}_{\log}(\C )$.
\end{theorem}

\begin{proof}
  Let $P,Q \in \mathbb{Z}[X]\setminus \{0\}$ coprime and take an
  primitive irreducible polynomial $F \in \Z[x]$ with leading
  coefficient $a$ and roots $\alpha_{1},\dots, \alpha_{n}$. Using
  Proposition~\ref{prop:6} we get
  \begin{flalign*}
		\int \log|F| \, \dd\mu_{P,Q} &= \log|a| - \sum_i U^{\mu_{P,Q}}(\alpha_i) \\
		&=\log|a|+ \sum_i \max \left\{ \frac{\log|P(\alpha
                  _i)|}{\deg(P)}, \frac{\log|Q(\alpha _i)|}{\deg(Q)+1}
                  \right\} \\ 
		&\geq \max \left\{\log|a|+  \sum_i \frac{\log|P(\alpha
                  _i)|}{\deg(P)}, \log|a|+ \sum_i
                  \frac{\log|Q(\alpha _i)|}{\deg(Q)+1} \right\} \\ 
		&\geq \max \left\{
                  \frac{\log|\operatorname{Res}(F,P)|}{\deg(P)},
                  \frac{\log|\operatorname{Res}(F,Q)|}{\deg(Q)+1}
                  \right\} \\ 
		&\geq 0,
  \end{flalign*}
  because $\operatorname{Res}(F,P)$ and $\operatorname{Res}(F,Q)$ are
  integers that cannot be both zero as $P$,$ Q$ are coprime and $F$ is
  irreducible.  Since $\mu_{P,Q} $ is invariant under complex
  conjugation, Theorem~\ref{SOS non compact}  implies that
  $\mu_{P,Q} \in \mathscr{P}^{\overline{\Z}}_{\log}(\C )$.
	
  Now let $\mu \in \mathscr{P}^{\overline{\Z}}_{\log}(\C )$ and choose
  a sequence $(\alpha _n)$ of distinct algebraic integers such that
  $O(\alpha _n)$ converges to $\mu$ $\log$-weakly. Let
  $P_n \in \mathbb{Z}[X]$ be the minimal polynomial of $\alpha
  _n$. By Proposition \ref{log_convergence_IFF_potential_L^1_loc}
  we have that
	\[
	-\frac{\log|P_n|}{\deg(P_n)} \text{ converges to } U^\mu \ 
        \text{in $L^1_{\loc}$},
      \]
which implies that 
	\[
	-\max \left\{ \frac{\log|P_n|}{\deg(P_n)},
          \frac{\log|P_{n+1}|}{\deg(P_{n+1})+1} \right\}
        \text{ converges to } U^{\mu} \  \text{in $L^1_{\loc}$}.
      \]
      By Proposition \ref{log_convergence_IFF_potential_L^1_loc} and
      \ref{prop:6} we have that $\mu_{P_n,P_{n+1}} $ converges to
      $\mu$ $\log$-weakly, proving the density of the family
      \eqref{eq:29} with respect to the $\log$-weak topology.
\end{proof}

\begin{remark}\label{rem:3}
  One could alternatively consider for each pair $P,Q$ in
  \eqref{eq:29} the measure $\mu^\prime_{P,Q}$ similarly induced by
  the rational function $ P^{\deg(Q)}/Q^{\deg(P)}$. This gives another
  countable dense subset of $\mathscr{P}^{\overline{\Z}}_{\log}(\C )$
  that might look more natural. The advantage of the measures
  $\mu_{P,Q}$ is that they are compactly supported, which is important
  for its later use in the proof of Proposition \ref{prop:7}.
\end{remark}


\subsection{Approximating measures by equilibrium measures of compact sets of capacity one}
\label{sec:negative-potentials}

The next result characterizes the measures in
$ \mathscr{P}_{\log}(\C )$ with negative potential as the $\log$-weak
closure of the set of equilibrium measures of compact subsets of
capacity one.

\begin{theorem} \label{negativo} Let
  $\mu \in \mathscr{P}_{\log}(\C )$. Then the following are
  equivalent:
	\begin{enumerate}
        \item \label{item:5} there exists a sequence of compact
          subsets $K_n \subseteq \mathbb{C}$ with
          $\mathrm{cap}(K_{n})=1$ such that $\mu_{K_n}$ converges
          $\log$-weakly to $\mu$;
		\item \label{item:6} $U^{\mu} \leq 0$.
        \end{enumerate}
        In particular, if $\mu$ is invariant under the complex
        conjugation and $U^{\mu}\le 0$ then
        $\mu\in
        \mathscr{P}_{\log}^{\overline{\mathbb{Z}}}(\mathbb{C})$.
\end{theorem}

\begin{proof}
We show first that \eqref{item:5} implies  \eqref{item:6}. Using the
monotone convergence theorem we get
\begin{align*}
U^\mu(z) &= \int \log|z-w|^{-1}  \dd\mu(w)\\
&= \lim_{M \rightarrow \infty} \int \min \left\{ M, \log|z-w|^{-1}\right\} \dd \mu(w)\\
         &= \lim_{M \rightarrow \infty} \lim_{n \rightarrow \infty} \int  \min \left\{ M, \log|z-w|^{-1}\right\} \dd \mu_{K_n}(w) \\[1mm]
  & \le \lim_{n\to \infty} U^{\mu_{K_n}}(z),
\end{align*}
where the third step follows from the fact that the function
$ w \mapsto \min \left\{ M, \log |z-w|^{-1}\right\}$ is continuous and of
logarithmic growth.  By Frostman's theorem \cite[Theorem
3.3.4(a)]{Ransford} 
\begin{displaymath}
U^{\mu_{K_n}}(z) \le -\log(\mathrm{cap}(K_{n}))=0  \quad \text{ for all } n
\end{displaymath}
and so $U^\mu(z)\leq 0$ as stated.

Now we prove that \eqref{item:6} implies \eqref{item:5}. Using
  Proposition \ref{truncation properties}\eqref{truncation
    approximates} and Lemma \ref{diagonal_argument} we  restrict
  without loss of generality to the case when $\mu$ is supported in a
  compact subset $K$. Then we can weakly approximate
  $\mu$ by a sequence of discrete  measures 
\begin{displaymath}
  \mu_n = \frac{1}{b_{n}} \sum_{s\in S_n}
  \delta_s,
\end{displaymath}
where $S_{n}$ is a finite subset of $K$ of cardinality
$b_{n} \in \mathbb{Z}_{>0}$.  Then $(\mu_n)$ converges properly to
$\mu$, and so it also converges log-weakly. By Proposition
\ref{log_convergence_IFF_potential_L^1_loc} we have that $U^{\mu_{n}}$
converges to $ U^\mu$ in $L^1_{\loc}$, and so
$\min\{U^{\mu_{n}},0\}$ converges to $ \min\{U^\mu,0\}=U^{\mu}$ in
$ L^1_{\loc}$.

Setting $P_n(X)=\prod_{\alpha \in S_n} (X-\alpha)$ we have that 
$U^{\mu_{n}}=-{\log|P_n|}/{\deg(P_n)}$ and so 
\begin{displaymath}
  \min\{U^{\mu_{n}},0\} = \min\left\{-\frac{\log|P_n|}{\deg(P_n)},0\right\} =U^{\mu_{K_{n}}}
\end{displaymath}
for the  compact subset
$K_n \coloneqq \big\{ z \mid |P_n(z)| \leq 1 \big\}$, which is of
capacity one because $P_n$ is monic.  Applying again Proposition
\ref{log_convergence_IFF_potential_L^1_loc} we conclude that
$\mu_{K_{n}}$ converges log-weakly to $\mu$, as stated. 

The last statement follows readily from the Fekete-Szeg\H{o} theorem
combined with Rumely's equidistribution result (Remark
\ref{Rumely_equidistribution}) together with Lemma \ref{diagonal_argument}.
\end{proof}

Using this result it is easy to find measures in
$\mathscr{P}_{\log}^{\overline{\Z}}(\C)$ that are not compactly
supported. For instance the Fubini-Study form
\[
\omega_{\FS} =\frac{i \, \dd z \wedge \dd \bar{z}}{2\pi \, (|z|^2+1)^2}
\]
induces a conjugation invariant measure $\mu_{\FS}\in
\csP_{\log}(\C )$ with
potential
\begin{displaymath}
  U^{\mu_{\FS}}(z)=-\frac{1}{2}\log(|z|^2+1)\le 0,
\end{displaymath}
and so Theorem \ref{negativo} gives that
$\mu_{\FS}\in \csP_{\log}^{\overline{\Z }}(\C )$.

Furthermore, combining this result with Theorem
\ref{mu_PQ_construction} it is also easy to produce measures in
$\csP_{\log}^{\overline{\Z }}(\C )$ that cannot be approached by
equilibrium measures of compact subsets of capacity one, e.g. see example \eqref{exampleintro}.


\section{Strong duality} \label{section_strong_duality}

Here we present two optimization problems that will be crucial for our
study of heights of algebraic points.  


Let $\ginf\colon \C  \longrightarrow \R $ be a continuous
function that is asymptotically logarithmic at $\infty$ in the sense of
Definition~\ref{log singular}, and for simplicity fix an enumeration
$Q_{1}, Q_{2}, Q_{3}, \dots$ of all nonconstant primitive irreducible
polynomials with integer coefficients. 
We  consider the \emph{primal problem}   and  the \emph{dual problem}
respectively defined~as 
\begin{equation}
  \label{eq:51}
   \primal(\ginf) = \inf \left\{ \int \ginf\, \dd\mu \ \,\middle|\, \ \mu \in \mathscr{P}_{\log}(\C ), \, \int \log|Q_{n}| \, \dd \mu \ge 0 \text{ for all } n\in \mathbb{N}\right\} 
  \end{equation}
and
 \begin{equation}
    \label{eq:6}
    \dual(\ginf) =
    \sup_{(a_{n})}   \inf_{z\in \C } \left( \ginf(z) - \sum_{n} a_{n}\log |Q_{n}(z)| \right) ,
  \end{equation}
  the supremum being over the sequences $(a_{n})$
  in $ \R _{\ge 0}$ with $a_{n}=0$ for all
  but a finite number of $n$'s.  We refer to the quantities
  $\mathcal{P}(\ginf)$ and $\dual(\ginf)$ as the
  \emph{optimal values} for~these~problems.

\begin{remark}
  \label{rem:21}
If $\ginf$ is invariant under the complex conjugation then
\begin{displaymath}
  \primal(\ginf)= \inf \Big\{ \int \ginf \,\dd\mu \, \Big| \
  \mu \in \mathscr{P}^{\overline{\Z }}_{\log}(\C )
  \Big\}.
\end{displaymath}
Indeed, in this case the primal problem can be computed over the
measures  $\mu\in \mathscr{P}_{\log}(\C )$ that  are invariant under
the complex conjugation and satisfy
$\int \log|Q_{n}| \, \dd \mu \ge 0 $ for all $n$, which by
Theorem~\ref{SOS non compact} coincide with those in
$ \mathscr{P}^{\overline{\Z }}_{\log}(\C )$.
\end{remark}

As explained in Appendix \ref{sec:dual-line-optim}, this pair of
problems is an instance of primal and dual problems in linear
optimization (Example \ref{exm:1}).  In particular, they satisfy the
weak duality property.

\begin{proposition}
  \label{prop:1} We have
  $\mathcal{P}(\ginf)\ge \dual(\ginf)$.
\end{proposition}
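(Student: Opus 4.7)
The statement is a standard weak duality assertion, so I would prove it by the usual ``primal value equals Lagrangian plus feasibility slack'' identity, paying attention only to the fact that the functions involved can be unbounded both at infinity (logarithmically) and near the roots of the $Q_{n}$'s.

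First I would fix an arbitrary primal feasible $\mu\in \mathscr{P}_{\log}(\C)$ and an arbitrary dual feasible sequence $a=(a_{n})_{n\in\mathbb{N}}$ of non-negative reals with finite support. The goal is to show
\[
\int \ginf\,\dd\mu \;\ge\; \inf_{z\in \C}\Big(\ginf(z)-\sum_{n}a_{n}\log|Q_{n}(z)|\Big),
\]
since taking the infimum over $\mu$ on the left and the supremum over $a$ on the right will then yield $\primal(\ginf)\ge \dual(\ginf)$. Before that, I need to justify that every integral in sight is well defined and finite. Since $\ginf$ is continuous and asymptotically logarithmic at infinity, it has logarithmic growth, hence $\int |\ginf|\,\dd\mu <\infty$ for any $\mu\in \mathscr{P}_{\log}(\C)$. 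For each $Q_{n}\in \Z[x]$ the function $\log|Q_{n}|$ also has logarithmic growth at infinity, so its positive part is $\mu$-integrable; therefore $\int \log|Q_{n}|\,\dd\mu$ is a well-defined element of $[-\infty,+\infty)$, and primal feasibility forces it to lie in $[0,+\infty)$.

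The core step is then the trivial identity
\[
\int \ginf \,\dd\mu \;=\; \int \Big(\ginf -\sum_{n}a_{n}\log|Q_{n}|\Big)\dd\mu \;+\; \sum_{n} a_{n}\!\int \log|Q_{n}|\,\dd\mu,
\]
valid because the sum over $n$ is finite (only finitely many $a_{n}$ are nonzero) and each term in the decomposition is a real number by the integrability just established. On the right-hand side, the second term is a non-negative real number, because $a_{n}\ge 0$ and $\int \log|Q_{n}|\,\dd\mu \ge 0$ by primal feasibility. The first term, being the integral of a function against a probability measure, is bounded below by the essential infimum of that function, which in turn is bounded below by $\inf_{z\in \C}(\ginf(z)-\sum_{n}a_{n}\log|Q_{n}(z)|)$ (understood with the convention $+\infty$ at the roots of the $Q_{n}$ with $a_{n}>0$, which does not affect the infimum). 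Combining these two bounds gives the desired inequality.

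There is no genuine obstacle here: the only mild subtlety is checking that the swap of finite sums with the integral and the passage to the pointwise infimum are legitimate, which is handled by the logarithmic growth hypothesis on $\ginf$ together with the log-integrability built into $\mathscr{P}_{\log}(\C)$ and primal feasibility. Taking the supremum over all dual feasible $a$ and the infimum over all primal feasible $\mu$ then delivers $\primal(\ginf)\ge \dual(\ginf)$.
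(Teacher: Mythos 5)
Your proof is correct and follows essentially the same route as the paper's: subtract the nonnegative quantity $\sum_n a_n\int\log|Q_n|\,\dd\mu$ from $\int\ginf\,\dd\mu$ and then bound the remaining integral below by the pointwise infimum against the probability measure. The paper just compresses these two steps into a single chain of inequalities without your (harmless) extra commentary on integrability.
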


\begin{proof}
  This is a particular case of Proposition \ref{prop:2} but can also
  be checked directly: for all $\mu\in \mathscr{P}_{\log}(\C )$ such
  that $ \int \log|Q_{n}| \, \dd \mu \ge 0$ for all $n$ and $(a_{n})$
  as in (\ref{eq:6}) we have
  \begin{displaymath}
    \int \ginf \, \dd \mu \ge     \int \Big( \ginf 
    - \sum_{n\in \mathbb{N}} a_{n}\log |Q_{n}|  \Big) \, \dd \mu
\ge \inf_{z\in \C } \Big( \ginf(z) - \sum_{n\in \mathbb{N}} a_{n}\log |Q_{n}(z)| \Big),     
\end{displaymath}
which gives the inequality.
\end{proof}

Our main result in this section shows  that these problems
satisfy the strong duality property, which consists in the equality
between their optimal values.  Its proof is modelled in that of
Theorem~\ref{thm:1} for the finite dimensional case. 

\begin{theorem} \label{strong_duality} We have 
  $ \dual(\ginf)=\primal(\ginf) \in \R $. 
\end{theorem}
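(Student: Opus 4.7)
The plan is to establish strong duality by adapting the Hahn--Banach separation proof of finite-dimensional LP duality to our infinite-dimensional setting.

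\textbf{Finiteness.} First I verify that both optimal values lie in $\R$. To see $\primal(\ginf) < +\infty$, I exhibit the Fubini--Study measure $\mu_{\FS}$ as a feasible element of $\mathscr{P}_{\log}(\C)$: its potential $U^{\mu_{\FS}}(z) = -\tfrac12 \log(1+|z|^2) \le 0$, so for any $Q \in \Z[x]$ with leading coefficient $a$ and roots $\alpha_1, \dots, \alpha_d$ we have $\int \log|Q|\, d\mu_{\FS} = \log|a| - \sum_i U^{\mu_{\FS}}(\alpha_i) \ge 0$; moreover $\int \ginf\, d\mu_{\FS}$ is finite because $\ginf$ has logarithmic growth and $\mu_{\FS}\in \mathscr{P}_{\log}(\C)$. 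For $\dual(\ginf) > -\infty$, taking the zero sequence yields $\dual(\ginf) \ge \inf_{z\in\C} \ginf(z) > -\infty$, since $\ginf$ is continuous and tends to $+\infty$ at infinity.

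\textbf{Separation.} By Proposition~\ref{prop:1} (weak duality) it remains to prove $\primal(\ginf) \le \dual(\ginf)$. Suppose for contradiction there exists $c \in \R$ with $\dual(\ginf) < c < \primal(\ginf)$, and consider the convex set
\[
B = \Big\{ \Big(\int \ginf\, d\mu + t_0,\, \big(-\!\!\int \log|Q_n|\, d\mu + t_n\big)_{n\in \N}\Big) : \mu \in \mathscr{P}_{\log}(\C),\ t_0,t_n \ge 0 \Big\} \subseteq \R \oplus \R^{\N}.
\]
The point $(c,0)$ is not in $B$: otherwise one would get a feasible $\mu$ with $\int\ginf\,d\mu \le c$, contradicting $c < \primal(\ginf)$. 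Since $B$ has nonempty interior in the product topology, the Hahn--Banach separation theorem applied to the pairing of $\R\oplus\R^{\N}$ with the finitely supported sequences $\R\oplus\R^{(\N)}$ yields a nonzero continuous linear functional $(\alpha, (\beta_n))$ (with only finitely many $\beta_n$ nonzero) satisfying $\alpha c \le \inf_{b\in B}(\alpha b_0 + \sum_n \beta_n b_n)$. Because $B$ is closed under adding nonnegative quantities to each coordinate, $\alpha \ge 0$ and every $\beta_n \ge 0$, and the inequality becomes
\begin{equation}\label{eq:plan_sep}
\alpha c \;\le\; \alpha \int \ginf\, d\mu - \sum_n \beta_n \int \log|Q_n|\, d\mu \qquad \text{for all } \mu \in \mathscr{P}_{\log}(\C).
\end{equation}

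\textbf{Slater-type qualification and conclusion.} The crux, which I expect to be the main obstacle, is to verify $\alpha > 0$. Suppose $\alpha = 0$; then $\sum_n \beta_n \int\log|Q_n|\,d\mu \le 0$ for every $\mu$. Test this against the equilibrium (Haar) measure $\lambda_{S_R}$ on the circle $|z|=R$ for any $R > 1$: using $U^{\lambda_{S_R}}(z) = -\log\max(R, |z|)$ one gets $\int\log|Q_n|\,d\lambda_{S_R} \ge \deg(Q_n)\log R > 0$ for every $n$ (the leading coefficient of the primitive $Q_n\in\Z[x]$ has absolute value at least one), so $\sum_n \beta_n \int\log|Q_n|\,d\lambda_{S_R} > 0$ whenever some $\beta_n > 0$, a contradiction. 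The case in which all $\beta_n = 0$ would force $\alpha \neq 0$ since the separating functional is nonzero. Hence $\alpha > 0$. Setting $a_n = \beta_n/\alpha \ge 0$ (finitely supported) and testing \eqref{eq:plan_sep} against Dirac masses $\delta_z \in \mathscr{P}_{\log}(\C)$ yields $c \le \ginf(z) - \sum_n a_n \log|Q_n(z)|$ for every $z\in\C$ (trivially at the roots of the active $Q_n$'s, where the right-hand side is $+\infty$). Taking the infimum in $z$ gives $c \le \dual(\ginf)$, contradicting $\dual(\ginf) < c$. The residual technicalities---the precise choice of locally convex topology justifying Hahn--Banach on $\R\oplus\R^{\N}$ and the closure properties of $B$---are handled by standard functional-analytic arguments parallel to those in the finite-dimensional case.
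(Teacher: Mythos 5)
Your plan follows the same broad strategy as the paper—separate a point from the convex set of constraint values, show the separating functional has a positive weight on the $\ginf$-coordinate, and test against Dirac masses—but the crucial Hahn--Banach step does not go through as stated, and the hole is precisely where the paper does its real work.

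The claim that $B$ has nonempty interior in the product topology on $\R\oplus\R^{\N}$ is false. A basic neighborhood of any point constrains only finitely many coordinates and leaves the remaining ones completely free; but in those free coordinates a point of $B$ has the form $-\int\log|Q_n|\,\dd\mu + t_n$ with $t_n\ge 0$, which is bounded below in terms of $\int\log^+|z|\,\dd\mu$ (hence in terms of the $0$-th coordinate and $\deg Q_n$), so arbitrary large negative perturbations in unconstrained coordinates land outside $B$. Thus $\operatorname{int}B=\emptyset$ and the interior-based separation theorem gives you nothing. You could instead try strong separation of the compact singleton $\{(c,0)\}$ from $\overline{B}$, but this requires $(c,0)\notin\overline{B}$, and checking that is not ``standard functional analysis'': in the product topology, a sequence of measures $\mu_k$ that is feasible for the first $N_k$ constraints (with $N_k\to\infty$) and has $\int\ginf\,\dd\mu_k$ below $\primal(\ginf)$ could very well make $(c,0)$ a limit of points of $B$. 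Ruling this out is exactly the content of the paper's Proposition~\ref{truncation-log infinity} and Lemma~\ref{monotone}: a sequence of nearly-optimal measures for the truncated problems is massaged via the sweetened truncation to produce an honest feasible measure $\mu_R$ with $\int\ginf\,\dd\mu_R\le\dual(\ginf)+\varepsilon$. The paper therefore does \emph{finite-dimensional} separation on each $V_n\subset\R^{n+2}$ (where nonempty interior is irrelevant and the cones are closed), getting $\dual(\ginf)\ge\lambda_n-1/n$, and then handles the passage $n\to\infty$ analytically via truncation. Your plan is missing that second ingredient entirely. (A secondary issue: the coordinates of $B$ must be real, so you should work with $\mathscr{P}'_{\log}(\C)$ and test Dirac masses only at non-algebraic $z$, extending by continuity; the paper does this.) The remaining parts of your plan—the finiteness of both optimal values, the Slater-type verification that $\alpha>0$ using $\lambda_{S_R}$, and the Dirac-mass conclusion—are sound and essentially match the paper's finite-dimensional separation step.
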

 
\begin{proof}
  Let $\csP_{\log}'(\C )$ be the convex set of probability
  measures on $\C $ that integrate all the functions of the
  form $\log|Q_{i}|$. For $n\in \mathbb{N}$
  consider the convex subset of $\R ^{n+2}$  defined as
  \begin{displaymath}
V_{n}=\Big\{ \Big( \int \ginf \,  \dd\mu ,  \int \log|Q_1| \,\dd\mu ,
    \dots, \int \log|Q_n| \,\dd\mu\Big) \, \Big| \
    \mu \in \csP_{\log}'(\C ) \Big\},
  \end{displaymath}
  and for each $\lambda\in \R $ consider also the convex subset
  of $\R ^{n+2}$ defined as
  \begin{displaymath}
    W_{n,\lambda}=\{ (t,x_{1},\dots, x_{n}) \, | \ t\le \lambda, \ x_{1}, \dots, x_{n}\ge 0\}.
  \end{displaymath}
We have that
  $V_{n}\cap W_{n,\lambda}\ne \emptyset$ if and only if there exists
  $\mu \in \mathscr{P}_{\log}'(\C )$ such that
  $ \int \ginf \,\dd\mu \le \lambda $ and
  $ \int \log|Q_{i}| \,\dd\mu \ge 0$, $i=1,\dots, n$.
  Now set
  \begin{displaymath}
    \lambda_{n}=\inf\{ \lambda \in \R  \, | \
V_{n}\cap W_{n,\lambda}\ne \emptyset\}.
\end{displaymath}
  Since $\ginf$ is bounded from below the values
  $\int \ginf\, \dd \mu$ for
  $\mu\in \mathscr{P}_{\log}(\C )$ are also bounded from below,
  and so $\lambda _{n}>-\infty$.
  On the other hand, the Dirac delta
  measure $\mu=\delta_{z}$ for $z\gg 0$ satisfies
  \begin{equation}
    \label{eq:11}
    \int \log|Q_{i}| \, \dd \mu = \log|Q_{i}(z)| > 0, \quad i=1,\dots, n.
  \end{equation}
In particular $\lambda _{n}\le \ginf(z) <+\infty$. Hence $\lambda
_{n}\in \R $. 

  Since $\lambda_{n}-1/n<\lambda _{n}$ the convex subsets $V_n$ and
  $W_{n,\lambda_{n}-1/n}$ are disjoint, and so by the hyperplane
  separation theorem there exists
  $h\in (\R ^{n+2})^{\vee} \simeq \R ^{n+2}$ with
  $h\ne 0$ such that $h(p)\ge h(p')$ for all $p\in V_{n}$ and
  $p'\in W_{n,\lambda_{n}-1/n}$.  Writing
  $h=(b,-a_{1},\dots, -a_{n})$ with $b,a_{i}\in \R $
  these conditions amount~to
  \begin{equation}
    \label{eq:201}
    b \int \ginf \,\dd\mu  - \sum_{i=1}^{n}a_{i}\int \log|Q_{i}|
    \,\dd\mu \ge     b\, t - \sum_{i=1}^{n}a_{i}x_{i}  
  \end{equation}
  for all $ \mu \in \csP_{\log}'(\C )$,
  $t\le \lambda_{n}-1/n$ and $x_{1},\dots, x_{n} \ge0$.

  Since $x_{i}$ can be arbitrarily large this inequality implies that
  $a_{i}\ge 0$, $i=1,\dots, n$, and since $t$ can also
  be arbitrarily negative we similarly deduce that $b\ge 0$.  To exclude
  the possibility that $b=0$, consider the case $x_{i}=0$,
  $i=1,\dots,n$,  and note that in this situation
  \eqref{eq:201} gives
  \begin{displaymath}
    -\sum_{i=1}^{n}a_{i}\int \log|Q_{i}| \,\dd\mu \ge 0    \quad \text{ for all }
    \mu \in \csP_{\log}'(\C ).
  \end{displaymath}
  Since $h\not = 0$ we have that $a_{i}>0$ for some
  $i\in \{1,\dots, n\}$, in which case the inequality does not hold
  for the measure in \eqref{eq:11}.  We conclude that $b>0$, and so we
  can assume without loss of generality that $b=1$.

  Setting $t=\lambda_{n}-1/n$  and $x_{1}=\cdots=x_{n}=0$ in~(\ref{eq:201}) we get
  \begin{equation}
\label{eq:401}
\int \ginf \,\dd\mu  - \sum_{i=1}^n a_{i} \int \log|Q_i| \,\dd \mu \ge \lambda_{n} -\frac{1}{n} \quad \text{ for all } \mu\in \mathscr{P}'_{\log}(\C ).
 \end{equation}
Considering this inequality  for 
$\mu=\delta_{z}$ with
$z\in \C  \setminus \overline{\mathbb{Q}}$ we get
\begin{displaymath}
\ginf(z) - \sum_{i=1}^n a_{i}\log|Q_{i}(z)| \ge \lambda_n
-\frac1n  \quad \text{ for all } z\in \C  \setminus
\overline{\mathbb{Q}}, 
\end{displaymath}
which extends by density to all $z\in \C $. Hence
$\dual(\ginf)\ge \lambda _{n}-1/n$. Therefore 
\begin{equation}\label{eq:3}
  \limsup_{n \rightarrow \infty} \lambda_n \leq \dual(\ginf).
\end{equation}

On the other hand we can choose
$\mu_{n} \in \mathscr{P}_{\log}'(\C )$ satisfying
  \begin{equation}
  \label{eq:101}
    \int \ginf \,\dd\mu_{n} \le \lambda_{n}+\frac{1}{n} \quad \text{ and } \quad
 \int \log|Q_{i}| \,\dd\mu_{n} \ge 0 , \quad  i=1,\dots, n.
\end{equation}
By \eqref{eq:3}
\begin{displaymath}
  \limsup_{n\to \infty} \int \ginf \,\dd\mu_{n}\leq \dual(\ginf).
\end{displaymath}
  By Proposition \ref{truncation-log infinity}, for any
  $\varepsilon>0$ there exists $R>1$ such that
\begin{displaymath}
  \limsup_{n\to \infty} \int \ginf \,\dd\musw_{n,R}\leq \dual(\ginf)+\varepsilon,
\end{displaymath}
where $\musw_{n,R}$ denotes the sweetened truncation of $\mu_{n}$
(Definition~\ref{def truncation}).  Since all the probability measures
$ \musw_{n,R}$ are supported on the ball $\DR=\{ |z|\leq R\}$, up to
taking a subsequence we can assume that they converge properly to a
probability measure~$\mu_{R}$ with support contained in~$\DR$. Then
\begin{flalign*}
	\int \ginf \,\dd\mu_R = \lim_{n\rightarrow \infty} \int \ginf \,\dd \musw_{n,R}\leq  \dual(\ginf)+\varepsilon, 
\end{flalign*} 
and  by Lemma \ref{monotone} and
Proposition \ref{truncation
  properties}\eqref{Smith_truncation_keeps_Smith_conditions} we also have
\begin{flalign*}
  \int \log|Q_{m}| \,\dd\mu_{R} \geq \liminf_{n \rightarrow \infty} \int \log|Q_m| \,\dd\musw_{n,R} \geq 0 \quad
  \text{ for all } m.
\end{flalign*}
Hence $\mu _{R}$ is a candidate for the primal problem and we
deduce that $\primal(\ginf) \leq \dual(\ginf)+\varepsilon$.  Taking
$\varepsilon> 0$ arbitrarily small we get
$\primal(\ginf) \leq \dual(\ginf)$, and we obtain the equality by combining this
with the weak duality property (Proposition~\ref{prop:1}).

The fact that 
$\primal(\ginf)$ is a real number follows from  the estimates
\begin{displaymath}
  -\infty < \inf_{z\in \C} \ginf(z) \le \primal(\ginf) \le
  \int \ginf\,\dd \lambda _{S_{1}} < \infty.
\end{displaymath}
\end{proof}

\begin{remark}
  \label{rem:101}
A related result was proved by Smith for functions on subsets of the
real line with super-logarithmic behavior around the point at infinity
\cite[Theorem~5.11]{Smith}. We extend this result to the boundary
case when the behavior is logarithmic,  
by slightly modifying the definition of sweetened truncation and using the
estimate from Proposition \ref{truncation-log infinity}.
\end{remark}

\begin{remark}
  \label{rem:6}
  As we will see in Section \ref{Nuno}, there may not exist a measure
  realizing the optimal value $\primal(\ginf)$. On the other hand,
  when such measure does exist it is not necessarily unique. An
  example is given by $\ginf(z) = \log^+|z/2|$: for all $R \in [1,2]$
  the equilibrium measure  $\lambda_{S_{R}}$ lies in
  $\mathscr{P}_{\log}^{\overline{\Z }}(\C )$ and attains the optimal
  value of $\primal(\ginf)$.
\end{remark}

  \begin{remark}
    More generally, when $\ginf(z)$ is \emph{radial} (that is it only
    depends on $|z|$) and subharmonic, the theory of toric varieties
    applies, see for instance \cite{Burgos_book_2014}. In particular
    \cite[Corollary 3.10]{BPSminima} implies that the measure
    $\lambda_{S_{1}}$ attains the optimal value of $\primal(\ginf)$,
    and if $\ginf$ is strictly subharmonic then \cite[Theorem
    4.18]{BPRStoric} implies that $\lambda _{S_{1}}$ is the unique
    measure attaining the optimal value.

    Since we are in dimension one it is simpler to give a direct proof
    of these facts.  Since $\ginf$ is radial and subharmonic we have that
    $\varphi(t) \coloneqq \ginf(e^t)$ is convex. Note also that since
    $\lim_{t \rightarrow -\infty} \varphi(t)=\ginf(0)$ is finite,
    $\varphi$ is non-decreasing. Then
 	\begin{displaymath}
		\int \ginf(z) \,\mathrm{d}\mu = \int \varphi \big( \log|z| \big) \,\mathrm{d}\mu \geq \varphi \left( \int \log|z| \,\mathrm{d}\mu \right) \geq \varphi(0) = \ginf(1) = \int \inf(z) \,\mathrm{d} \lambda_{S_{1}},
              \end{displaymath}
              where the first inequality is Jensen's inequality, whereas
              the second uses that $\varphi$ is
              non-decreasing and that $\int \log|z| \,\dd \mu
              \ge 0$.
              
              If we assume further that $\ginf$ is strictly
              subharmonic then $\lambda_{S_{1}}$ is the unique measure
              in
              $\mathscr{P}_{\log}^{\overline{\mathbb{Z}}}(\mathbb{C})$
                attaining the optimal value.  In fact, in this case
                $\varphi$ is strictly convex and increasing. If
                $\mu \in \mathscr{P}_{\log}^{\overline{\Z }}(\C )$ is
                a minimizer, then Jensen's inequality
\begin{displaymath}
	\int \varphi \big( \log|z| \big) \,\mathrm{d}\mu \geq \varphi \left( \int \log|z| \,\mathrm{d}\mu \right)
      \end{displaymath} must be an equality, which means $|z|$ must be constant on the support of $\mu$, say $|z|=r$. Then
\begin{displaymath}
	\int \varphi \big( \log|z| \big) \,\mathrm{d}\mu = \varphi \left( \int \log|z| \,\mathrm{d}\mu \right) = \varphi(\log r).
      \end{displaymath} Now $\mu$ is a minimizer, so $\varphi(\log r)=\varphi(0)$ and hence $r=1$.  Then $\mu \in \mathscr{P}_{\log}^{\overline{\Z }}(\C )$ is supported on the unit circle and so $\mu=\lambda_{S_{1}}$. 
\end{remark}


\section{The essential minimum of height functions} \label{heights}

In this section we briefly recall the Arakelov point of view of our height
functions and use the strong duality theorem to extract some
properties for their essential minima.

\label{sec:essent-minim-heights}
\subsection{Arakelov theory on
  $\mathbb{P}^1$} \label{Arakelov_theory_on_P^1} Let $\mathbb{P}^{1}$
be the projective line over $\mathbb{Q}$ equipped with its universal
line bundle $\mathcal{O}(1)$. Consider their canonical integral
models $\PP^{1}_{\Z}$ and $\mathcal{O}(1)_{\mathbb{Z}}$ together with
a continuous metric $\|\cdot\|$ on the holomorphic line bundle
$\cO(1)_{\C}$ over the Riemann sphere $\PP^1(\C)$ that is
invariant under the complex conjugation.  The pair
$\overline{\mathcal{O}(1)_{\mathbb{Z}}} =
(\mathcal{O}(1)_{\mathbb{Z}},\|\cdot\|)$ is called a \emph{metrized
  line bundle} on $\PP_{\Z}^1$, and following \cite[Section
3]{BostGilletSoule} it induces a height
function
\begin{displaymath}
  \Ht_{\overline{\mathcal{O}(1)_{\mathbb{Z}}}}\colon \mathbb{P}^{1}(\overline{\mathbb{Q}}) \longrightarrow \mathbb{R}.
\end{displaymath}


Let $(x_{0}\colon x_{1})$ be the homogeneous coordinates of
$\PP^{1}$. Then $x_{1}$ is a global section of $\cO(1)_{\mathbb{C}}$
vanishing at the point at infinity $\infty=(1:0)$. Using the
identification $\PP^1(\C)\backslash (1:0) \simeq \mathbb{C} $ given by
the map $(x_0\colon x_1) \longmapsto z=x_0/x_1$, the metric $\|\cdot\|$ is
encoded by its associated \emph{Green function of continuous type}
  \begin{equation}
    \label{eq:9}
    \ginf \colon \C \longrightarrow \R , \quad z
    \longmapsto -\log\|x_1(z\colon1)\|. 
  \end{equation}
  This is a continuous function that is invariant under the complex
  conjugation and satisfies the asymptotics
  $ \ginf(z)=\log|z|+a+ o(1) $ as $z\to \infty$ with
  $a\in \mathbb{C}$. Conversely, every Green function of continuous type
  defines a continuous and conjugation invariant metric on
  $\cO(1)_{\mathbb{C}}$.
  
  Already in the work of Faltings leading to the proof of Mordell's
  conjecture \cite{Faltings_Finiteness}, it became apparent that for
  many arithmetic applications one should consider metrics admiting
  some singularities. In the recent work \cite{Yuan-Zhang}, Yuan and
  Zhang introduced adelic line bundles on quasi-projective varieties
  allowing to extend Arakelov geometry to a wide class of singular
  metrics.
  
In particular, this allows to consider an adelic line
  bundle $\overline{L}$ on the affine line
  $\mathbb{A}^1_{\mathbb{Q}}$ that is defined by the line bundle
  $\mathcal{O}(1)_{\mathbb{Z}}$ on the compactification
  $\mathbb{A}^{1}_{\mathbb{Z}} \subseteq \mathbb{P}^{1}_{\mathbb{Z}}$,
  together with a metric $\|\cdot \|$ on $\cO(1)_{\C}$ that can be
  singular at the point at infinity. Following Section 5.3.1 in \emph{loc. cit.} we
  can associate to this adelic line bundle a height function
  \begin{displaymath}
  \Ht_{\overline{L}}\colon \mathbb{A}^{1}(\overline{\mathbb{Q}}) = \overline{\mathbb{Q}} \longrightarrow \mathbb{R}.
\end{displaymath}
The archimedean Green functions associated to these particular adelic line bundles
are exactly the Green functions in the sense of Definition \ref{log
  singular} \cite[Theorem 3.6.4]{Yuan-Zhang}.  Within this family of line bundles, $\overline{L}$ is
characterized by its Green function $\ginf$, so for simplicity we denote
its height function by $ \Ht_{\ginf }$.

This height function can be described in very concrete terms. Let
$\alpha \in \overline{\Q}$ and denote by $P_{\alpha }\in \Z[x]$
a primitive irreducible polynomial with $P_{\alpha }(\alpha )=0$, which
is unique up to a sign.  Let~$c_{\alpha }$ be the leading coefficient
of $P_{\alpha }$ and $O(\alpha )\subseteq \overline{\Q}$ its set of
zeros. The height of $\alpha $ is then given by
\begin{equation}\label{eq:4}
  \Ht_{\ginf}(\alpha )=\frac{1}{\deg (\alpha )}\left( \log|c_{\alpha }|
      + \sum_{\beta  \in O(\alpha )}\ginf(\beta )\right).
  \end{equation}
  If $\alpha \in \overline{\Z}$ then we can choose $P_{\alpha }$
monic and so
\begin{equation}
  \label{eq:10}
  \Ht_{\ginf}(\alpha )= \frac{1}{\deg(\alpha )}\sum_{\beta  \in
    O(\alpha )}\ginf(\beta ),
\end{equation}
whereas for an arbitrary $\alpha\in \overline{\mathbb{Q}}$ we only have
  the inequality
\begin{displaymath}
  \Ht_{\ginf}(\alpha )\ge \frac{1}{\deg(\alpha )}\sum_{\beta  \in
    O(\alpha)}\ginf(\beta ).
\end{displaymath}

There is another description of the height of an algebraic number as a
sum of local contributions that will be useful later. 
Let $\cM_{\Q}$ be the set of places of $\Q$, that is
\begin{displaymath}
  \cM_{\Q}=\{p\in \Z \mid p>0\ \text{prime}\}\cup \{\infty\}.
\end{displaymath}
For each $\nu\in \cM_{\Q}$ let $|\cdot|_{\nu}$ denote either the usual
absolute value of $\Q$ if $\nu =\infty$ or the $p$-adic absolute value
of $\mathbb{Q}$ if $\nu=p$ for some prime $p$. This absolute value
extends uniquely to the complete and algebraically closed field
$\C_{\nu}$. Then the
height of $\alpha $ can be alternatively written as~\cite[Section
1.5.7]{BombieriGubler}
\begin{equation}
  \label{eq:8}
  \Ht_{\ginf}(\alpha )=\frac{1}{\deg(\alpha )}\left( \left(
      \sum_{p \textrm{ prime}} \sum_{\beta \in O(\alpha )}
      \log^{+}|\beta |_{p}\right)
      + \sum_{\beta \in O(\alpha)}\ginf(\beta )\right).
  \end{equation}

\begin{definition}
  Let
  $\Ht_{\ginf}\colon \overline{\mathbb{Q}}\longrightarrow \mathbb{R}$
  be the height function associated to a Green function $\ginf$. Its 
  \emph{essential minimum} is defined as
  \begin{displaymath}
\ess(\Ht_{\ginf})\coloneqq\inf \left\{ \liminf_{n \rightarrow \infty}
  \Ht_{\ginf}(\alpha_n) \,\middle|\, (\alpha_n) \textrm{ is a  sequence of distinct algebraic numbers}
\right\}.
\end{displaymath}
\end{definition}

\begin{remark}
  To stay as elementary as possible, from now on we will forget about
  the Arakelov point of view and focus on the Green function $\ginf$
  as the source for the height function. Nevertheless this point of
  view suggests many generalizations of the present work. First,
  instead of considering the canonical model
  $(\PP^{1}_{\Z},\mathcal{O}(1)_{\mathbb{Z}})$ of the pair
  $(\PP^{1},\mathcal{O}(1))$ one can consider arbitrary integral
  models, or even more generally one can consider general adelic line
  bundles with different metrics over each place of $\Q$. Second,
  instead of the divisor $[(1:0)]$ of $\PP^{1}$ with respect to which
  the Green function is defined, one can consider arbitrary
  divisors. Finally, instead of $\PP^{1}$ one can consider arbitrary
  curves or even higher dimensional varieties.
\end{remark}

\subsection{The essential minimum and linear programming}
\label{sec:essent-minim-atta}
In this section we relate the essential minimum of the height function
with the linear programming problems $\cD(\ginf)$ and~$\cP(\ginf)$.

\begin{proposition}\label{prop:3} The relation
    $ \ess(\Ht_{\ginf})\ge \cD(\ginf) $
  holds true.
\end{proposition}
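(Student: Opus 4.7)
The plan is to implement Smyth's method in the generality of the dual problem. Fix a sequence $a = (a_n)_{n \in \mathbb{N}}$ of non-negative reals with finite support and set $f_a(z) = \ginf(z) - \sum_n a_n \log|Q_n(z)|$. The core step is Smyth's pointwise inequality: for every $\alpha \in \overline{\Q}$ not a root of any $Q_n$ with $a_n > 0$,
\begin{displaymath}
\Ht_{\ginf}(\alpha) \geq \inf_{z \in \C} f_a(z).
\end{displaymath}
Granting this, the conclusion is routine: for any sequence $(\alpha_k)_k \subseteq \overline{\Q}$ of distinct algebraic numbers, only finitely many $\alpha_k$ can lie in the zero set of the finitely many $Q_n$ with $a_n > 0$, so Smyth's bound applies to all large $k$ and yields $\liminf_k \Ht_{\ginf}(\alpha_k) \geq \inf_z f_a(z)$. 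Taking the infimum over sequences gives $\ess(\Ht_{\ginf}) \geq \inf_z f_a(z)$, and the supremum over $a$ produces $\ess(\Ht_{\ginf}) \geq \dual(\ginf)$.

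To verify Smyth's pointwise inequality I proceed as follows. The case $\inf_z f_a(z) = -\infty$ is trivial, so assume $\inf_z f_a > -\infty$. The asymptotic $\ginf(z) = \log|z| + o(\log|z|)$ then forces $\sum_n a_n \deg Q_n \leq 1$, because otherwise $f_a$ would tend to $-\infty$ at infinity. Starting from the formula $\deg(\alpha)\Ht_{\ginf}(\alpha) = \log|a_\alpha| + \sum_{\beta \in O(\alpha)} \ginf(\beta)$, the trivial bound $\sum_\beta \ginf(\beta) \geq \sum_\beta f_a(\beta) + \sum_n a_n \sum_\beta \log|Q_n(\beta)|$, the inequality $\sum_\beta f_a(\beta) \geq \deg(\alpha) \inf_z f_a$, and the resultant identity $\sum_\beta \log|Q_n(\beta)| = \log|\Res(P_\alpha, Q_n)| - (\deg Q_n) \log|a_\alpha|$, one obtains
\begin{displaymath}
  \deg(\alpha)\Ht_{\ginf}(\alpha) \geq \deg(\alpha)\inf_z f_a(z) + \log|a_\alpha|\bigl[1 - \textstyle\sum_n a_n \deg Q_n\bigr] + \textstyle\sum_n a_n \log|\Res(P_\alpha, Q_n)|.
\end{displaymath}
Since $a_\alpha$ is a non-zero integer and each $\Res(P_\alpha, Q_n)$ is a non-zero integer (the primitive irreducibles $P_\alpha$ and $Q_n$ are coprime in $\Q[x]$ as $\alpha$ is not a root of $Q_n$), both $\log|a_\alpha|$ and $\log|\Res(P_\alpha, Q_n)|$ are non-negative. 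Combined with $1 - \sum_n a_n \deg Q_n \geq 0$, this gives Smyth's bound after dividing by $\deg(\alpha)$.

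There is no serious technical obstacle; the argument is essentially a repackaging of Smyth's classical bound with a routine passage to the liminf. The main subtlety, which is what forces the argument to use the leading-coefficient term $\log|a_\alpha|$ rather than being a pure sum over Galois conjugates, is the reduction to $\sum_n a_n \deg Q_n \leq 1$: for non-integer $\alpha$ the term $\log|a_\alpha|$ can be arbitrarily large, and only the constraint coming from the asymptotic behaviour of $\ginf$ at infinity prevents the correction $\log|a_\alpha|[1 - \sum_n a_n \deg Q_n]$ from having the wrong sign. This is precisely what allows Smyth's method to apply to arbitrary algebraic numbers rather than only to algebraic integers.
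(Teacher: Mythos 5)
Your proof is correct but takes a genuinely different route from the paper's. The paper uses the adelic description of the height (formula \eqref{eq:8}) together with the product formula: it writes the height as a sum of local contributions $\sum_p \sum_\beta \log^+|\beta|_p + \sum_\beta \ginf(\beta)$, subtracts $\sum_i a_i \log|Q_i(\beta)|_\nu$ at every place, and uses at each finite prime $p$ the elementary ultrametric bound $\sum_i a_i \log|Q_i(\beta)|_p \le \log^+|\beta|_p$ (valid because $Q_i\in\Z[x]$ and $\sum a_i\deg Q_i\le 1$). You instead work entirely at the archimedean place with the leading-coefficient formula \eqref{def height}, and concentrate the integrality input into two nonnegative quantities: $\log|a_\alpha|$ (since $a_\alpha$ is a nonzero integer) and $\log|\Res(P_\alpha,Q_n)|$ (since the resultant is a nonzero integer whenever $\alpha$ is not a root of $Q_n$ and $P_\alpha$ is irreducible). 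Both proofs hinge on the same reduction to $\sum a_n \deg Q_n\le 1$ forced by the asymptotic \eqref{general asymptotic}, and both are valid: your resultant-based argument is closer in spirit to the proof the paper gives for the ``easy'' direction of Theorem \ref{SOS non compact} (which is also stated via resultants) and is perhaps more elementary, while the paper's version is better adapted to the adelic/Arakelov framework developed in Section \ref{heights} and makes the ``sum over places'' structure of the height explicit. One small precision: nonvanishing of $\Res(P_\alpha,Q_n)$ needs only the irreducibility of $P_\alpha$ (not of $Q_n$), so your argument also covers the unrestricted formulation of $\dual(\ginf)$ given in the introduction where the $P_i$ are arbitrary integer polynomials.
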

\begin{proof}
  Recall the product formula
  \begin{math}
    \prod_{\nu \in \cM_{\Q}} |\alpha |_{\nu}=1
  \end{math}
  for $ \alpha \in \Q^\times$,
  which implies that
  \begin{equation}
    \label{eq:12}
    \sum_{\nu\in \cM_{\Q}}\sum_{\beta \in O(\alpha)}\log |\beta |_{\nu}=0 \quad \text{ for }  \alpha \in \overline \Q^\times
  \end{equation}
because $\prod_{\beta \in O(\alpha )} \beta \in \Q$.
Let $Q_{i}$, $i=1,\dots, n$, be nonzero polynomials with integer coefficients
and $a_{1},\dots,a_{n}\in \R_{\ge 0}$. If we show that 
\begin{equation}\label{eq:15}
  \ess(\Ht_{\ginf})\ge \inf_{z\in \C} \left( \ginf(z)-\sum_{i=1}^{n}a_{i}\log|Q_{i}(z)|\right),
\end{equation}
then the proposition will be proved.

If $\sum a_{i}\deg(Q_{i})>1$ then
\begin{displaymath}
  \inf_{z\in \C} \left( \ginf(z)-\sum_{i=1}^{n}a_{i}\log|Q_{i}(z)|\right)=-\infty
\end{displaymath}
and the inequality  \eqref{eq:15} is trivially true. So we can assume that 
\begin{equation}
  \label{eq:16}
  \sum a_{i}\deg (Q_{i})\le 1.
\end{equation}

Let $Z \subseteq \overline{\mathbb{Q}} $ be the union of the set of
zeros of the $Q_{i}$'s.  By the product formula \eqref{eq:12}, for all 
$\alpha\in \overline \Q\setminus Z$ we have that
\begin{multline*}
  \Ht_{\ginf}(\alpha)=\frac{1}{\deg(\alpha)}
  \sum_{p \textrm{ prime}} \sum_{\beta \in O_{\nu}(\alpha)}
  \left(\log^{+}|\beta|_{p}- \sum_{i=1}^{n}a_{i}\log|Q_{i}(\beta)|_{p}\right)\\
+\frac{1}{\deg(\alpha)}\sum_{\beta \in O_{\nu}(\alpha)}\left(\ginf(\beta)-\sum_{i=1}^{n}a_{i}\log|Q_{i}(\beta)|\right).
\end{multline*}
By \eqref{eq:16} and the fact that the polynomials $Q_{i}$ have
integer coefficients,  for every prime $p$ we have that
\begin{displaymath}
  \sum_{i=1}^{n}a_{i}\log|Q_{i}(\beta)|_{p}\le \log^{+}|\beta|_{p}.
\end{displaymath}
Therefore 
\begin{displaymath}
  \Ht_{\ginf}(\alpha)\ge
  \frac{1}{\deg(\alpha)}\sum_{\beta \in O(\alpha)}\left(\ginf(\beta)-\sum_{i=1}^{n}a_{i}\log|Q_{i}(\beta)|\right) \quad \text{ for all }
  \alpha\in \overline
\Q\setminus Z.
\end{displaymath}
This givens the inequality \eqref{eq:15} and proves the proposition.
\end{proof}

\begin{proposition}\label{prop:4}
  The relation $\ess(\Ht_{\ginf})\le \cP(\ginf)$
  holds true.
\end{proposition}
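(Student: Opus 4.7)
The plan is to exploit directly the definition of $\csP^{\overline{\Z}}_{\log}(\C)$ together with the reformulation of $\cP(\ginf)$ provided by Remark \ref{rem:21}. Since the Green function $\ginf$ is invariant under complex conjugation, that remark identifies
\begin{equation*}
\cP(\ginf)=\inf\left\{\int \ginf\,\dd\mu \,\middle|\, \mu\in\csP^{\overline{\Z}}_{\log}(\C)\right\},
\end{equation*}
so it suffices to show that $\ess(\Ht_{\ginf})\le \int \ginf \, \dd \mu$ for every measure $\mu$ in this set.

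Fix such a $\mu$. By the definition of $\csP^{\overline{\Z}}_{\log}(\C)$, there is a sequence of distinct algebraic integers $(\alpha_n)\subset \overline{\Z}$ with $\delta_{O(\alpha_n)}\to \mu$ $\log$-weakly. (If $\mu=\delta_{O(\alpha)}$ for a single $\alpha$, one can take the $\alpha_n$ to range over the Galois conjugates of $\alpha$, which are distinct algebraic integers with the same associated measure.) By the integer case of the height formula \eqref{eq:10}, we have
\begin{equation*}
\Ht_{\ginf}(\alpha_n)=\frac{1}{\deg(\alpha_n)}\sum_{\beta\in O(\alpha_n)}\ginf(\beta)=\int \ginf\,\dd\delta_{O(\alpha_n)}.
\end{equation*}

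The key point is that $\ginf$ is a continuous function of logarithmic growth: indeed, the asymptotic $\ginf(z)=\log|z|+o(\log|z|)$ together with continuity on compact sets yields constants $A,B>0$ with $|\ginf(z)|\le A+B\log^+|z|$ for all $z\in\C$. Therefore the $\log$-weak convergence of the measures gives
\begin{equation*}
\lim_{n\to\infty}\Ht_{\ginf}(\alpha_n)=\lim_{n\to\infty}\int \ginf\,\dd\delta_{O(\alpha_n)}=\int \ginf\,\dd\mu.
\end{equation*}
Since the $\alpha_n$ are distinct algebraic numbers, the very definition of the essential minimum \eqref{defess} yields $\ess(\Ht_{\ginf})\le \int \ginf\,\dd\mu$.

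Taking the infimum over all $\mu\in \csP^{\overline{\Z}}_{\log}(\C)$ gives the desired inequality $\ess(\Ht_{\ginf})\le \cP(\ginf)$. There is no serious obstacle here: the proof is essentially a repackaging of the observations already made around \eqref{eq. spectrum} in the introduction, the only delicate verification being that $\ginf$ has logarithmic growth in the sense of Definition \ref{log-tight} so that $\log$-weak convergence actually tests against $\ginf$. Unlike Proposition \ref{prop:3}, which required both the product formula and the integrality of the $Q_i$ to handle the contributions of the leading coefficient $a_\alpha$, the present direction is simpler because restricting to algebraic integers kills that term outright.
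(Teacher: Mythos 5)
Your proof is correct and follows the same route as the paper's: use formula \eqref{eq:10} for integers, note that $\ginf$ has logarithmic growth so that $\log$-weak convergence of $\delta_{O(\alpha_n)}$ to $\mu$ forces $\Ht_\ginf(\alpha_n)\to\int\ginf\,\dd\mu$, and conclude from the definition of the essential minimum. You make explicit the appeal to Remark \ref{rem:21} (which the paper leaves implicit), which is a genuine improvement in clarity, since without that remark you would only bound $\ess(\Ht_\ginf)$ by the infimum over $\csP^{\overline{\Z}}_{\log}(\C)$, not by $\cP(\ginf)$ as defined in \eqref{eq:51}. One small slip: the parenthetical about the case $\mu=\delta_{O(\alpha)}$ is both unnecessary and incorrect — the Galois conjugates of a single $\alpha$ all share the same minimal polynomial and hence yield the \emph{same} measure $\delta_{O(\alpha)}$, so they give only a finite set of $\alpha_n$'s, which would not furnish a valid sequence for \eqref{defess}. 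Fortunately the parenthetical is not needed: since $\csP^{\overline{\Z}}_{\log}(\C)$ is defined as the set of \emph{limit points} of $\{\delta_{O(\alpha)}\mid\alpha\in\overline{\Z}\}$, one automatically gets an approximating sequence with pairwise distinct measures $\delta_{O(\alpha_n)}$ (and hence, after discarding conjugates, distinct $\alpha_n$ of unbounded degree by Remark \ref{degree_unbounded}).
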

\begin{proof}
  Let $\mu \in \mathscr{P}_{\log}^{\overline{\Z }}$ and take a
  sequence of distinct algebraic integers $(\alpha_n)$ such that
  $O(\alpha_n)$ converges log-weakly to $\mu$. Using formula
  \eqref{eq:10} and the fact that $\ginf$ is a continuous function of
  logarithmic growth, we have that
  \begin{equation}
    \label{eq:32}
    \lim_{n\to \infty} \Ht_{\ginf}(\alpha_{n})=
    \lim_{n\to \infty} \frac{1}{\deg(\alpha_n)}\sum_{p \in O(\alpha_n)} \ginf(p)
    =\int \ginf \,\dd\mu.
  \end{equation}
  Hence
  $\displaystyle{ \ess(\Ht_{\ginf})\leq \inf \Big\{ \int \ginf
    \,\dd\mu \, \Big| \ \mu \in \mathscr{P}^{\overline{\Z }}_{\log}(\C
    ) \Big\}= \primal(\ginf)}$, as stated.
\end{proof}


Putting together Propositions \ref{prop:3} and \ref{prop:4} with
Theorem \ref{strong_duality} we obtain the main theorem of this
section. 

\begin{theorem} \label{strong_duality_essential_minimum}
Let $\ginf$ be a Green function. Then $\cD(\ginf) = \ess(\Ht_{\ginf}) = \cP(\ginf)$.
\end{theorem}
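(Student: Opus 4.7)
The plan is to chain together three results established earlier in the paper, which together sandwich $\ess(\Ht_{\ginf})$ between $\dual(\ginf)$ and $\primal(\ginf)$ and then collapse the sandwich using strong duality. There is essentially no new content to produce here; the theorem is a synthesis step.

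First I would apply Proposition \ref{prop:3}, which gives the lower bound $\dual(\ginf)\leq \ess(\Ht_{\ginf})$. This is Smyth's method packaged as weak duality: for any nonnegative $a_1,\dots,a_n$ and any $Q_1,\dots,Q_n\in\Z[x]$, the product formula together with the integrality of the $Q_i$ implies that the auxiliary function $\ginf-\sum a_i\log|Q_i|$ lower bounds $\Ht_{\ginf}(\alpha)$ at any algebraic number outside the zero locus of the $Q_i$, and passing to the sup over admissible $(a,Q)$ yields $\dual(\ginf)\le\ess(\Ht_{\ginf})$.

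Next I would invoke Proposition \ref{prop:4} for the reverse direction $\ess(\Ht_{\ginf})\leq \primal(\ginf)$. By Remark \ref{rem:21}, the primal problem is computed over $\csP_{\log}^{\overline{\Z}}(\C)$. Given any $\mu$ in this space, there exists a sequence of distinct algebraic integers $(\alpha_n)$ with $\delta_{O(\alpha_n)}\to \mu$ log-weakly, and since $\ginf$ is continuous of logarithmic growth, using formula \eqref{eq:10} we obtain $\Ht_{\ginf}(\alpha_n)\to \int \ginf\,\dd\mu$. Taking the infimum over $\mu$ gives the desired inequality.

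Finally, Theorem \ref{strong_duality} asserts $\dual(\ginf)=\primal(\ginf)\in\R$. Combining these three inputs yields the chain
\begin{displaymath}
  \dual(\ginf)\leq \ess(\Ht_{\ginf})\leq \primal(\ginf)=\dual(\ginf),
\end{displaymath}
forcing equality throughout and completing the proof. The real work is hidden in Theorem \ref{strong_duality}, whose proof relies on the sweetened truncation and a hyperplane-separation argument in finite-dimensional projections; at this final assembly step the only thing to be careful about is that all three results apply under the hypothesis that $\ginf$ is a Green function (in particular continuous, conjugation-invariant, and asymptotically logarithmic), which is precisely the assumption of the theorem.
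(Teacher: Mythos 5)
Your proof is correct and is exactly the paper's own argument: the paper derives the theorem by combining Propositions \ref{prop:3} and \ref{prop:4} with Theorem \ref{strong_duality} to sandwich $\ess(\Ht_{\ginf})$ between $\dual(\ginf)$ and $\primal(\ginf)$. Nothing differs in substance, and your remark that the real content lies in Theorem \ref{strong_duality} is accurate.
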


A direct consequence of Theorem \ref{strong_duality_essential_minimum}
is that the essential minimum can be reached by a sequence of
algebraic integers.

\begin{corollary} \label{ess_attained_by_integer}
	There exists a sequence of distinct algebraic integers
        $(\alpha_n)$ such that
        \begin{equation}
          \label{eq:33}
         \lim_{n\to \infty}
        \Ht_{\ginf}(\alpha_n) = \ess(\Ht_{\ginf}). 
        \end{equation}
\end{corollary}

As a consequence of a result of Szachniewicz based on the theory of
globally valued fields~\cite[Theorem A]{szachniewicz2023} it can be
shown that $\Ht_{\ginf}(\overline{\Q})$ is dense in the interval
$[\ess(h_{\ginf}),\infty)$.  As another application of our results we show that
this is already the case for the set of heights of algebraic integers. 

\begin{corollary}\label{spectrum}
The set
$\Ht_\ginf(\overline{\Z})$ is dense in $[\ess(h_{\ginf}),\infty)$.  
\end{corollary}

\begin{proof} 
  By \eqref{eq:32} the set
  \begin{displaymath}
  I=\left\{\int \ginf \, \dd \mu \,\middle|\, \mu \in \mathscr{P}_{\log}^{\overline{\Z }}(\C ) \right\}  
  \end{displaymath}
  is contained in the closure of $\Ht_\ginf(\overline{\Z})$. By
  Theorem \ref{SOS non compact} it is a convex subset of $\R$, hence
  an interval.  By Theorem \ref{strong_duality_essential_minimum} this
  interval contains real numbers that are arbitrarily close to
  $\ess(h_{\ginf})$, and considering $\mu=\lambda_{S_{R}}$ for
  $R\gg 0$ we can also see that it is not bounded above.  Hence
  $(\ess(h_{\ginf}), \infty) \subseteq I$, which gives the statement.
\end{proof}

\begin{remark}
  \label{rem:7}
  A direct consequence of Corollary \ref{spectrum} is that the
  sequence of distinct algebraic integers $(\alpha_{n})$ in
  \eqref{eq:33} can be chosen so that $h_{\ginf}(\alpha_{n})$ is
  monotonically decreasing.
\end{remark}

\subsection{Asymptotic maximal slope}
\label{sec:maximal-slope}

In this section we prove that in the dual problem we can reduce to
linear combinations with rational  coefficients. This has
two applications. The first one is to relate the essential minimum with another
Arakelov theory invariant called the asymptotic maximal slope and the
second one is the left computability of the essential minimum as we
will see in section \ref{sec:comp-essent-minim}. Consider
the rational dual problems
\begin{displaymath}
  \dual_{\Q}(\ginf) \coloneqq \sup \left\{ \inf_{z\in \C} \Big(
    \ginf(z) - \sum_{i=1}^k a_i \log |Q_i(z)| \Big) \,\middle|\,
    a_i\in \Q_{\geq 0}, Q_i\in \Z[x]\setminus\{0\},\sum a_{i}\deg(Q_{i})\le 1
  \right\}.
\end{displaymath}
and 
\begin{displaymath}
  \dual'_{\Q}(\ginf) \coloneqq \sup \left\{ \inf_{z\in \C} \Big(
    \ginf(z) - \sum_{i=1}^k a_i \log |Q_i(z)| \Big) \,\middle|\,
    a_i\in \Q_{\geq 0}, Q_i\in \Z[x]\setminus\{0\},\sum a_{i}\deg(Q_{i})<1
  \right\}.
\end{displaymath}
Clearly $\dual(\ginf)\ge \dual_{\Q}(\ginf)\ge \dual'_{\Q}(\ginf)$.
\begin{proposition}\label{prop:5}
  The equalities $\dual(\ginf)= \dual_{\Q}(\ginf)=\dual'_{\Q}(\ginf)$ hold. 
\end{proposition}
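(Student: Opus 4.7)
The plan is to establish the nontrivial inequality $\dual(\ginf) \leq \dual'_{\Q}(\ginf)$, since the chain $\dual(\ginf) \geq \dual_{\Q}(\ginf) \geq \dual'_{\Q}(\ginf)$ is immediate from the definitions. Fix a finite tuple $Q_1, \dots, Q_k \in \Z[x]$ of nonconstant primitive irreducible polynomials and consider
\[
F\colon \R_{\geq 0}^k \to \R \cup \{-\infty\}, \qquad F(a) \coloneqq \inf_{z \in \C} \Bigl(\ginf(z) - \sum_{i=1}^{k} a_i \log |Q_i(z)|\Bigr).
\]
Being an infimum of affine functions of $a$, $F$ is concave and upper semi-continuous. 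By the argument already made in the proof of Proposition~\ref{prop:3}, $F(a) = -\infty$ whenever $\sum a_i \deg(Q_i) > 1$, so I may restrict the supremum defining $\dual(\ginf)$ to the polytope $C \coloneqq \bigl\{ a \in \R_{\geq 0}^k \,\bigm|\, \sum a_i \deg(Q_i) \leq 1 \bigr\}$.

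The core observation is that $F$ is finite and continuous on the relative interior $\operatorname{int}(C) = \bigl\{ a \in \R_{>0}^k \,\bigm|\, \sum a_i \deg(Q_i) < 1 \bigr\}$: finiteness holds because, for such $a$, the integrand behaves like $(1 - \sum a_i \deg Q_i)\log|z| + o(\log|z|)$ at infinity with positive leading coefficient, blows up to $+\infty$ near each zero of each $Q_i$, and is continuous elsewhere; continuity of $F$ on $\operatorname{int}(C)$ is then the classical fact that a concave function which is finite on a convex set is continuous on the relative interior. Given a candidate $a \in C$ with $F(a) > -\infty$ and a reference point $c \in \operatorname{int}(C)$ (e.g.\ $c_i = 1/(2kd)$ with $d = \max_i \deg Q_i$, for which $F(c) \in \R$), set $a_\epsilon \coloneqq (1-\epsilon)a + \epsilon c \in \operatorname{int}(C)$. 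By concavity of $F$,
\[
F(a_\epsilon) \geq (1-\epsilon) F(a) + \epsilon F(c) \xrightarrow[\epsilon \to 0^+]{} F(a).
\]
With $a_\epsilon$ safely inside the open polytope $\operatorname{int}(C)$, continuity of $F$ lets me pick a rational $b \in \operatorname{int}(C) \cap \Q^k$ arbitrarily close to $a_\epsilon$ and with $F(b)$ as close to $F(a_\epsilon)$ as desired. Such a $b$ is admissible for $\dual'_{\Q}(\ginf)$, and letting $\epsilon \to 0^+$ and tightening the rational approximation delivers $\dual'_{\Q}(\ginf) \geq F(a)$. Taking the supremum over all admissible $(a, Q_1, \ldots, Q_k)$ completes the argument.

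The main technical subtlety is controlling $F$ near the boundary of its effective domain, especially at the facet $\sum a_i \deg(Q_i) = 1$, where $F$ can in principle drop discontinuously because the integrand fails to be coercive at infinity. The scaling $a \mapsto a_\epsilon$ circumvents this precisely by retreating into $\operatorname{int}(C)$, where the classical continuity theorem for concave functions applies without ceremony; the error this retreat introduces is then absorbed by concavity together with the finiteness of $F(c)$.
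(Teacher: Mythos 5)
Your proof is correct, and it takes a genuinely different route from the one in the paper. You view the parametrized infimum $F(a) = \inf_{z}\bigl(\ginf(z) - \sum_i a_i\log|Q_i(z)|\bigr)$ as a proper concave function whose effective domain lies in the polytope $C = \{a\in\R_{\geq 0}^k\mid \sum_i a_i\deg Q_i\leq 1\}$, invoke the classical continuity of a proper concave function on the interior of its domain, and use the concavity bound $F((1-\epsilon)a+\epsilon c)\geq(1-\epsilon)F(a)+\epsilon F(c)$ to retreat from an arbitrary admissible $a\in C$ into $\operatorname{int}(C)$, where rational perturbation is harmless. The paper instead builds the rational approximant $\mathbf{b}$ explicitly, requiring $a_i/2\leq b_i<a_i$ for each $i$, and checks $\inf\varphi_{\mathbf b}\geq\inf\varphi_{\mathbf a}-\varepsilon$ by decomposing $\C$ into a neighborhood $U$ of the zeros of $\prod_i Q_i$ (where the factor $1/2$ guarantees $\varphi_{\mathbf b}\geq\inf\varphi_{\mathbf a}$), the region $\{|z|\geq R\}$ (where $\log|Q_i|\geq 0$ and $b_i<a_i$ give $\varphi_{\mathbf b}\geq\varphi_{\mathbf a}$ pointwise), and the compact remainder $K$ (where a direct uniform estimate in $\mathbf b$ gives the $\varepsilon$-bound). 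Your version delegates the local analysis to a general theorem of convex analysis and is the more conceptual of the two; the paper's is elementary and self-contained but needs more case splitting. Both hinge on the same coercivity observation: for parameters in $\operatorname{int}(C)$ the integrand tends to $+\infty$ both at infinity, because $1-\sum_i a_i\deg Q_i>0$, and at each zero of $Q_i$, because $a_i>0$.
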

\begin{proof}
Let $n\geq 1$ and take ${\bf a}=(a_1,\ldots,a_n)\in \R_{\geq 0}^n$ and
$(Q_1,\ldots,Q_n) \in (\Z [x]\setminus\{0\})^n$.   Let 
	\[
	\varphi_{\bf a}(z) \coloneqq \ginf(z) - \sum_{i=1}^n a_i \log|Q_i(z)|. 
	\]

Then, it is enough to show that for any $\varepsilon>0$ and any ${\bf a}\in \R_{\geq 0}^n$, there exists ${\bf b}\in \Q_{\geq 0}^n$ with $\sum_{i=1}^n b_i\deg Q_i<1$ such that
\begin{equation}\label{goal}
\inf \varphi_{\bf a} \leq \inf \varphi_{\bf b} +\varepsilon.
\end{equation}

Relation \eqref{goal} holds trivially if $\inf \varphi_{\bf a}=-\infty$ or ${\bf a}={\bf 0} \in \R^n$, so we can assume $\inf \varphi_{\bf a}>-\infty$ and $a_i\neq 0$ for all $i$. Then, necessarily 
\begin{equation}\label{politopo}
\sum_{i=1}^na_i\deg Q_i\leq 1.
\end{equation} 
 Let $U$ be an open and bounded neighborhood of the set of zeroes of
 $\prod_{i=1}^n Q_i(x)$  such that for all  $z\in U$, it holds  
 \begin{equation}\label{explosion}
\inf   \varphi_{\bf a}\leq
\ginf(z)-\sum_{i=1}^na_i\max\left\{\frac{1}{2}\log|Q_i(z)|,
  \log|Q_i(z)|\right\}
 \end{equation}

Choose $R\geq 1$ big enough so that $\log|Q_i(z)|\geq 0$  for all
$|z|\geq R$ and $i=1,\ldots,n.$ Set $K \coloneqq U^c\cap \{|z|\leq
R\}$.  Since $K$ is a compact set, there exist ${\bf b}\in \Q_{\geq
  0}^n$ such that  
\begin{displaymath}
  \frac{a_i}{2}\leq b_i < a_i \textrm{ for all } i  \textrm{ and  }\inf_{x\in K} \varphi_{\bf b}(x)\geq  \inf_{x\in K} \varphi_{\bf a}(x) - \varepsilon.
\end{displaymath}

In particular, in view of \eqref{politopo} we have that

\begin{equation}\label{compact}
\sum_{i=1}^nb_i\deg Q_i<1 \textrm{ and } \inf_{z\in K} \varphi_{\bf
  b}(z)\geq \inf  \varphi_{\bf a} - \varepsilon.
\end{equation}
On the other hand, we claim that
\begin{equation}\label{inf U}
 \inf_{z\in U}\varphi_{\bf b }(z)\geq \inf\varphi_{\bf a}.
 \end{equation}
Indeed,  due to the choice of the $b_i$ we have
\begin{displaymath}
  \varphi_{\bf b }(z)\geq \ginf(z)
  -\sum_{i=1}^na_i\max\left\{\frac{1}{2}\log|Q_i(z)|,
    \log|Q_i(z)|\right\}, 
\end{displaymath}
so the claim follows from  \eqref{explosion}.  Finally, for $z$ such that $|z|\geq R$ we have 

\begin{displaymath}
  \varphi_{\bf b }(z)-\varphi_{\bf a}(z)=\sum_{i=1}^n(a_i-b_i)\log|Q_i(z)|\geq 0,
\end{displaymath}
thus $\inf_{|z|\geq R} \varphi_{\bf b}(z)\geq \inf
\varphi_{\bf a}.$ This estimate, together with \eqref{compact} and
\eqref{inf U}, imply \eqref{goal}.   
\end{proof}

We now consider on $\PP^{1}_{\Z}$ the line bundles $\cO(n)=\cO(1)^{\otimes
n}$. The metric on $\cO(1)$ induces metrics on each $\cO(n)$. For each
global section $s\in \rH^{0}(\PP^{1}_{\Z},\cO(n))$ the sup norm is defined
as
\begin{displaymath}
  \|s\|_{\infty} \coloneqq \sup_{z\in \PP^{1}(\C)}\|s(z)\|.
\end{displaymath}
For the particular family of adelic line bundles on $\mathbb{A}^1_\mathbb{Q}$ introduced in Section \ref{Arakelov_theory_on_P^1}, the \emph{asymptotic maximal slope} of $\overline{L}$ is defined as
by 
\begin{displaymath}
  \hat{\mu}(\overline{L})\coloneqq\sup_{\substack{n\geq 1\\s \in
    \rH^0(\PP_{\Z}^1,\mathcal{O}(n)) \backslash \{0\}}} -
  \frac{\log\|s\|_\infty}{n}. 
\end{displaymath}
By unfolding the definitions, one can see that
\begin{displaymath}
  \hat{\mu}(\overline{L})=\dual_{\Q}(\ginf).
\end{displaymath}
Hence, as a direct consequence of Theorem \ref{strong_duality_essential_minimum} and Proposition \ref{prop:5} we obtain the next result.

\begin{corollary}\label{cor:1}
  The equality
  $\hat{\mu}(\overline{L})=\ess(\Ht_{\overline{L}})$ is satisfied.
\end{corollary}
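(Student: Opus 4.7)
The plan is to deduce Corollary \ref{cor:1} from a short chain of identities:
\[
\hat{\mu}(\overline{\cL})\ =\ \dual_{\Q}(\ginf)\ =\ \dual(\ginf)\ =\ \ess(\Ht_{\overline{\cL}}),
\]
where the middle equality is Proposition \ref{prop:5} and the right-hand equality is part of Theorem \ref{strong_duality_essential_minimum}. So all that needs to be established is the unfolding identity $\hat{\mu}(\overline{\cL})=\dual_{\Q}(\ginf)$ alluded to in the paragraph preceding the corollary.

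First I would identify $\rH^{0}(\PP^{1}_{\Z},\cO(n))$ with $\{Q\in \Z[x]\mid \deg Q\le n\}$ via the usual bijection $P(x_{0},x_{1})\leftrightarrow Q(x)=P(x,1)$. Using the trivialization of $\cO(n)$ by the section $x_{1}^{n}$ on $\A^{1}\subset \PP^{1}$ and the defining relation $\ginf(x)=-\log\|x_{1}(x\colon 1)\|$, a direct computation gives
\[
\|s(x\colon 1)\|=|Q(x)|\,e^{-n\ginf(x)}, \qquad x\in \C,
\]
so that
\[
-\frac{\log\|s\|_{\infty}}{n}\ =\ \inf_{z\in \C}\Bigl(\ginf(z)-\tfrac{1}{n}\log|Q(z)|\Bigr).
\]
A small check that the supremum over $\PP^{1}(\C)$ is attained on $\A^{1}(\C)=\C$ (equivalently, that $\|s\|_{\infty}<\infty$) follows from $\ginf(z)=\log|z|+o(\log|z|)$ and $\deg Q\le n$: the expression $n\ginf(z)-\log|Q(z)|$ either tends to $+\infty$ (if $\deg Q<n$) or stays bounded above and below (if $\deg Q=n$) as $|z|\to\infty$.

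It remains to observe that the families of functions
\[
\Bigl\{\tfrac{1}{n}\log|Q|\ \Bigm|\ n\ge 1,\ Q\in \Z[x],\ \deg Q\le n\Bigr\}\quad\text{and}\quad \Bigl\{\textstyle\sum a_{i}\log|Q_{i}|\ \Bigm|\ a_{i}\in \Q_{\ge 0},\ Q_{i}\in \Z[x],\ \sum a_{i}\deg Q_{i}\le 1\Bigr\}
\]
coincide: one inclusion is trivial, and for the other, given rational coefficients $a_{i}=m_{i}/N$, set $Q\coloneqq \prod Q_{i}^{m_{i}}\in \Z[x]$, so that $\sum a_{i}\log|Q_{i}|=\tfrac{1}{N}\log|Q|$ and $\deg Q=N\sum a_{i}\deg Q_{i}\le N$. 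Taking the supremum over either family gives $\hat{\mu}(\overline{\cL})=\dual_{\Q}(\ginf)$, which combined with the chain above yields the corollary.

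I do not anticipate any serious obstacle here: all the analytic content is already in Theorem \ref{strong_duality_essential_minimum} and Proposition \ref{prop:5}, and the identification $\hat{\mu}(\overline{\cL})=\dual_{\Q}(\ginf)$ is just a bookkeeping exercise about the sup norm of integer sections of $\cO(n)$. The only point requiring a (very short) argument is the asymptotic check that $\|s\|_\infty$ is finite and controlled by the supremum over $\C$, which uses only the asymptotically logarithmic behavior of $\ginf$ encoded in Definition \ref{log singular}.
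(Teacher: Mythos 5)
Your proof is correct and follows the same route the paper intends: the paper itself reduces the corollary to the unfolding identity $\hat{\mu}(\overline{\cL})=\dual_{\Q}(\ginf)$ (stated without proof in the sentence preceding the corollary) combined with Proposition \ref{prop:5} and Theorem \ref{strong_duality_essential_minimum}; you simply make the unfolding explicit. The identification of $\rH^{0}(\PP^{1}_{\Z},\cO(n))$ with $\{Q\in\Z[x]:\deg Q\le n\}$, the norm computation $\|s(x:1)\|=|Q(x)|\,e^{-n\ginf(x)}$, and the observation that the two families of test functions coincide (clearing denominators in one direction, packing a single polynomial with $a_{1}=1/n$ in the other) are all as intended.

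One small imprecision worth flagging: your claim that when $\deg Q=n$ the quantity $n\ginf(z)-\log|Q(z)|$ ``stays bounded above and below'' as $|z|\to\infty$ is not guaranteed by the hypothesis $\ginf(z)=\log|z|+o(\log|z|)$. For instance, the Faltings Green function $\ginf_{\hyp}(z)=\log|z|-6\log\log|z|+O(1)$ of Section \ref{sec:essent-minim-falt} has $n\ginf_{\hyp}(z)-\log|Q(z)|\to-\infty$ for $\deg Q=n$, so $\|s\|_{\infty}=+\infty$. This does not damage the argument, because in that case both sides of
\[
-\frac{\log\|s\|_{\infty}}{n}=\inf_{z\in\C}\Bigl(\ginf(z)-\tfrac{1}{n}\log|Q(z)|\Bigr)
\]
are $-\infty$ and the section simply drops out of the supremum on either side of $\hat{\mu}(\overline{\cL})=\dual_{\Q}(\ginf)$. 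So rather than asserting boundedness, you should note that the displayed identity holds in $\R\cup\{-\infty\}$ unconditionally, with both sides $-\infty$ exactly when the expression is unbounded below; the case analysis on $\deg Q$ versus $n$ is then unnecessary.
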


\begin{remark} \label{rem:5} When $\ginf$ is subharmonic and of continuous type, this equality is a particular case of a result of Balla\"y
  \cite[Theorem 1.2]{Ballay}. Yuan conjectured
  a more general version of the result in \emph{loc. cit.}, see
  \cite[Conjecture 5.3.5]{Yuan-Zhang}. This conjecture  has been
  established in some cases beyond \cite[Theorem 1.2]{Ballay}, see
  \cite[Theorem 1.8]{Qu-Yin} and  \cite[Theorem 5.3.6]{Yuan-Zhang}. 
\end{remark}

\subsection{An example with no minimizer} \label{Nuno}

In this subsection we present an example due to Nuno Hultberg, where the infimum \[
	\inf_{\mu \in \mathscr{P}_{\log}^{\overline{\Z }}(\C )} \int \ginf \,\dd\mu
	\] is not attained by any measure $\mu_0 \in
        \mathscr{P}_{\log}^{\overline{\Z }}(\C )$.
Consider the automorphism $f \colon \mathbb{P}^1 \longrightarrow
\mathbb{P}^1$, $x \longmapsto x^{-1}+2$. In homogeneous coordinates is
given by $(x_0\colon x_1) \longmapsto (x_1+2x_0\colon x_0)$ under
the identification $(x:1) = (x_0\colon x_1)$.  Let
$\overline{\mathcal{L}}=\big(
\mathcal{O}(1),\|\cdot\|_{\operatorname{can}} \big)$, where the
canonical metric $\|\cdot\|_{\operatorname{can}}$ is the metric whose
Green function is $\log^{+}$.
Consider the hermitian line bundle
 $f^*\overline{\mathcal{L}}$. By the projection formula
 $\Ht_{f^*\overline{\mathcal{L}}}=\Ht_{\overline{\mathcal{L}}}\circ
 f$, we see that
 \begin{displaymath}
   \ess(\Ht_{f^*\overline{\mathcal{L}}})=\ess
 \Ht_{\overline{\mathcal{L}}}  =\nobreak 0.
\end{displaymath}
The Green function associated with $f^{\ast}\overline{\cL}$ is
$\ginf(z) = \log^+|z^{-1}+2| + \log|z|$. In particular, Theorem
\ref{strong_duality_essential_minimum} applies to
$\Ht_{f^*\overline{\mathcal{L}}}$. 
	
Assume there exists $\mu_0 \in
\mathscr{P}_{\log}^{\overline{\Z }}(\C )$ such that $
\int \ginf \,\dd\mu_0 = \ess(\Ht_{\ginf})= \nobreak 0$. Then
there exists a sequence of distinct algebraic integers $(\alpha _n)$ such that
$\delta _{O(\alpha _n)}$ converges to $\mu_0$ in the $\log$-weak sense. Therefore $
\Ht_{\ginf}(\alpha _n) $ converges to $\int \ginf
\,\dd\mu_0 =0$. Using the projection formula and Bilu's
equidistribution theorem \cite[Theorem 1.1]{Bi97}, we see that
$\delta _{O(\alpha _n)}$ must converge weakly to $f^*\mu_{S^1}$, which forces
$\mu_0=f^*\mu_{S^1}$. However,
\begin{align*}
  \int \log^+|z^{-1}+2| + \log |z| \,\dd f^*\mu_{S^1}(z)
  &= \int \log^+|t| +\log|t-2|^{-1}\dd \mu_{S^1}(t) \\ 
  &=-\int \log |t-2|\dd \mu_{S^1}(t)\\
  &=-\log 2\not = 0,
\end{align*}
which is a contradiction.

The preceeding  argument also shows that $f^*\mu_{S^1} \notin
\mathscr{P}_{\log}^{\overline{\Z }}(\C )$, even though
$\mu_{S^1}$ does belong to $
\mathscr{P}_{\log}^{\overline{\Z }}(\C )$.  To see this
more concretely, consider the polynomial 
\[
P_n(x) = \frac{x^{n+1}-2x^n+1}{x-1} \in \Z [x]
\] and let $\alpha _n$ be a root  of $P_n$. Then $f^{-1}(\alpha _n)$
is an algebraic integer and the sequence $\Ht_{\ginf}(f^{-1}(\alpha _n))
= \Ht_{\overline{\mathcal{L}}}(\alpha _n) $, $n\ge 1$, converges to zero when $n$ goes
to $\infty$. If we look at
the Galois orbit of $f^{-1}(\alpha _n)$, there are $n-1$ conjugates of
$f^{-1}(\alpha _n)$ that are close to the support of $f^*\mu_{S^1}$, yet the
remaining conjugate goes to infinity at the speed of $2^n$. Therefore
$\delta _{O(f^{-1}(\alpha _{n}))}$ converges weakly to $f^*\mu_{S^1}$ but
does not converge $\log$-weakly.
 The situation is similar to Autissier's counterexample
\cite{Autissier} showing that logarithmic equidistribution of small
points is not true in general. 

In this example, it also happens that the essential minimum is strictly smaller than 
\begin{equation}\label{cap1part2}
\inf\left\{ \int \ginf \,\dd\mu_K \,\middle|\, K \textrm{ is a  conjugation invariant compact set of capacity one}\right\}.
\end{equation}

Indeed,
\begin{flalign*}
  \quad \quad \quad
  &
\ginf(z) = \log^+|z^{-1}+2| + \log|z| = \log|z| + \log 2 +
                            o(1) \quad \text{as
                            $|z|\longrightarrow\infty$}. 
	&&
\end{flalign*}
Let $\mu_\infty \coloneqq \frac{1}{2\pi }\Delta \ginf=f^*\mu_{S^1}$. Then
$\ginf(z) = -U^{\mu_\infty} + \log(2)$ and for any probability measure
$\mu$, 
\begin{displaymath}
  \int \ginf \,\dd\mu = \log 2 + \int -U^{\mu_\infty} \,\dd\mu= \log 2
  + \int -U^\mu \,\dd\mu_\infty. 
\end{displaymath}
If $\mu$ is equilibrium measure of a capacity one set, then
$-U^\mu \geq 0$, and we will have $ \int \ginf \,\dd\mu \geq
\log 2$. So \eqref{cap1part2} cannot be the true essential minimum
(which is $0$).

\subsection{The essential minimum of Faltings' height}
\label{sec:essent-minim-falt}
Let $\Gamma=\operatorname{SL}_2(\Z )$ and consider the
associated modular curve $Y$, which is defined over $\Q $. Note
that $Y(\overline{\Q})$ is in bijection with the set of
isomorphism classes of elliptic curves over 
$\overline{\Q}$. 
Let $X$ be the compactification of $Y$. There is a line bundle
$M_{12}$ on $X$ such that $\rH^0(X, M_{12}^{\otimes n}) \simeq
M_{12n}(\Gamma,\Q )$ as Hecke modules, where the latter is the
$\Q $-space of modular forms of
weight $12n$ and level $1$ with rational Fourier coefficients. 

We have a canonical integral model $\big(
\mathcal{X}/\Z ,\mathcal{M}_{12} \big)$ of $\big(
X/\Q $, $M_{12} \big)$. Note that
$\rH^0(\mathcal{X},n\mathcal{M}_{12})\simeq\mathcal{M}_{12n}(\Gamma,\Z )$
as Hecke modules, where the latter is the abelian group of modular
forms of weight $12n$ and level $1$ with integral Fourier
coefficients.

The Petersson metric $\|\cdot\|_{\Pet}$ on $M_{12}(\C )$ is
defined as $\|f\|_{\Pet} (\tau )\coloneqq |f(\tau )| \big( 4 \pi
\operatorname{Im}\tau  \big)^6$, i.e. taking $\big( 4 \pi
\operatorname{Im}\tau  \big)^6$ as metric weight, where $f$ is any modular
form of weight $12$ and level $1$ and $\tau\in \HH\coloneqq \{z\in
\C\mid  \Img(z)>0\}$ is a point in the Poincar\'e upper half
plane. The function  $\|f\|_{\Pet}(\tau )$ is invariant under the
action of $\Gamma $ and descents to a function on $Y(\C)$. 
We
also define the $L_2$-norm on   
$M_{12}(\C )$ by 
\begin{displaymath}
  \| f \|_{L^2}\coloneqq\int_{Y(\C)}
  \|f\|_{\Pet} \,\dd\mu_{\hyp}.
\end{displaymath}

Here, $\mu_{\hyp}$ is the hyperbolic measure on $Y(\C)$ normalized
such that it is a probability measure.  

Let $\overline{\cM_{12}} \coloneqq (\cM_{12},\|\cdot\|_{\Pet})$. Then,
the Faltings height can be brought to our framework by the relation
\cite[Section 2.1]{BurgosGil_essMinFaltings}
\begin{displaymath}
  \Ht_{\rm F}(\alpha )=\frac1{12} \Ht_{\overline{\mathcal{M}_{12}}}(\alpha ).
\end{displaymath}

\begin{remark}
This height is in fact the \emph{stable} Faltings height of the elliptic curve of $j$-invariant $\alpha$, and can also be defined using the {Hodge bundle} equipped a canonical metric \cite[Theorem 5.5.1]{Yuan-Zhang}. 
\end{remark}

The $j$-function induces an isomorphism $j \colon \mathcal{X} \cong \nobreak \mathbb{P}^1_\Z $ over $\Z $, where
\begin{itemize}
	\item the cusp point corresponds to $\infty$,
	\item the automorphic bundle $\mathcal{M}_{12}$ corresponds to $\mathcal{O}(1)$,
	\item the modular discriminant $\Delta \in
          \rH^0(\mathcal{X},\mathcal{M}_{12})$ corresponds to the
          section $x_1 \in \rH^0(\mathbb{P}^1_{\Z},\mathcal{O}(1))$. 
\end{itemize}

Let $\|\cdot\|_{\hyp}$ be the metric on
$\mathcal{O}(1)(\C )$ corresponding to the Petersson metric on
$M_{12}(\C )$ and let $\ginf_{\hyp} \colon \C
\longrightarrow \R$ be the induced Green function. By the previous
identifications it is given by
\begin{displaymath}
  \ginf_{\hyp} (z)=-\log\|\Delta (\tau _z)\|_{\Pet},
\end{displaymath}
where $\tau _{z}\in \HH$ is sent to $z$ by $j$.
Note that
we have the asymptotic estimate \cite[Section 3.2]{BurgosGil_essMinFaltings} 
\[
\ginf_{\hyp}(z) = \log|z| - 6\log\log|z| + \nobreak O(1), \quad
\text{as $|z| \longrightarrow \infty$}, 
\]
so $\ginf_{\hyp}$ is a Green function in the sense of Definition
\ref{log singular}. Thus Theorem
\ref{strong_duality_essential_minimum} and  Corollary \ref{cor:1} apply and we get 
\begin{theorem} \label{Faltings_height_1} 
	\[
	\sup_{\substack{n\geq 1\\ s \in
            \rH^0(\mathbb{P}^1_{\Z},\mathcal{O}(n)) \backslash \{0\}}} -
        \frac{\log\|s\|_{\hyp,\infty}}{n} =
        12\ess(\Ht_{\mathrm{F}}) = \inf_{\mu \in
          \mathscr{P}_{\log}^{\overline{\Z}}(\C)} \int
        \ginf_{\hyp} \,\dd \mu. 
	\]
\end{theorem}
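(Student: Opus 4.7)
The plan is to recognize the three quantities in the statement as the dual optimal value $\dual(\ginf_{\hyp})$, the essential minimum $\ess(\Ht_{\ginf_{\hyp}})$, and the primal optimal value $\cP(\ginf_{\hyp})$, respectively, and then to invoke the strong duality theorem proved earlier.

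First I would record the relation $\Ht_{\mathrm{F}}=\frac{1}{12}\Ht_{\overline{\cM_{12}}}$ recalled in the paragraph preceding the theorem, which together with the isomorphism $j\colon \cX\cong \PP^1_{\Z}$ sending $\overline{\cM_{12}}$ to $(\cO(1),\|\cdot\|_{\hyp})$ gives $12\,\ess(\Ht_{\mathrm{F}})=\ess(\Ht_{\ginf_{\hyp}})$. Since $\ginf_{\hyp}$ has already been shown to satisfy Definition \ref{log singular}, and since $\ginf_{\hyp}$ is invariant under complex conjugation (the Petersson norm of $\Delta$ is), Remark \ref{rem:21} identifies the right-hand side of the statement as $\cP(\ginf_{\hyp})$.

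Next I would identify the left-hand side with $\dual_{\Q}(\ginf_{\hyp})$. Under the identification $j$, a section $s\in \rH^0(\PP^1_{\Z},\cO(n))$ corresponds to a polynomial $Q\in\Z[x]$ of degree at most $n$, and $-\frac{1}{n}\log\|s\|_{\hyp,\infty}=\inf_{z\in\C}\big(\ginf_{\hyp}(z)-\frac{1}{n}\log|Q(z)|\big)$; after clearing denominators this shows that the supremum in the statement equals $\hat\mu(\overline{\cM_{12}})=\dual_{\Q}(\ginf_{\hyp})$, which by Proposition \ref{prop:5} equals $\dual(\ginf_{\hyp})$. This is exactly the content of Corollary \ref{cor:1} applied to the adelic line bundle $(\cO(1),\|\cdot\|_{\hyp})$.

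Finally, having realized the three terms as $\dual(\ginf_{\hyp})$, $\ess(\Ht_{\ginf_{\hyp}})$ and $\cP(\ginf_{\hyp})$, Theorem \ref{strong_duality_essential_minimum} immediately yields the desired chain of equalities. There is no real obstacle: the only point requiring a small verification is that $\ginf_{\hyp}$ is asymptotically logarithmic at infinity, which is guaranteed by the estimate $\ginf_{\hyp}(z)=\log|z|-6\log\log|z|+O(1)$ recorded just before the theorem, so that all the hypotheses of Theorem \ref{strong_duality_essential_minimum} and Corollary \ref{cor:1} are met.
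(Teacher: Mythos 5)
Your proof is correct and follows essentially the same route as the paper: the authors likewise observe that the asymptotic estimate $\ginf_{\hyp}(z)=\log|z|-6\log\log|z|+O(1)$ places $\ginf_{\hyp}$ in the class of Green functions of Definition~\ref{log singular}, and then simply invoke Theorem~\ref{strong_duality_essential_minimum} together with Corollary~\ref{cor:1}. Your additional unpacking of why the sup over sections equals $\hat{\mu}(\overline{\cM_{12}})=\dual_{\Q}(\ginf_{\hyp})$ and why the inf over $\csP^{\overline{\Z}}_{\log}(\C)$ equals $\cP(\ginf_{\hyp})$ (via Remark~\ref{rem:21}) is accurate and merely spells out what the paper compresses into a citation.
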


\begin{theorem} \label{Faltings_height_2}
Let $\mathscr{P}_{\log}^{\overline{\Z }}\big(
\mathcal{Y}(\C ) \big)$ be the set of probability measures that
can be approximated $\log$-weakly by Galois orbits in
$\mathcal{Y}(\overline{\Z })$. Then
	\begin{align*}
		\sup_{\substack{n\geq 1 \\ f \in \mathcal{M}_{12n}(\Gamma,\Z)\setminus\{0\}}} - \frac{\log\|f\|_{\Pet, \infty}}{n} &= \sup_{\substack{n\geq 1\\ f \in \mathcal{M}_{12n}(\Gamma,\Z)\setminus\{0\}}} - \frac{\log\|f\|_{L^2}}{n} \\
		 &= 12\ess(\Ht_{\operatorname{F}})\\ &= \inf_{\mu \in \mathscr{P}_{\log}^{\overline{\Z }}\big( \mathcal{Y}(\C ) \big)} \int -\log\|\Delta\|_{\operatorname{Pet}} \,\dd\mu.
	\end{align*}
\end{theorem}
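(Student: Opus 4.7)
The plan is to reduce Theorem~\ref{Faltings_height_2} to Theorem~\ref{Faltings_height_1} via the $j$-isomorphism and then bridge the sup and $L^2$ norms via a Bergman-type comparison. More precisely, under $j\colon \mathcal{X}\cong\PP^1_{\Z}$ the metrized line bundle $\overline{\mathcal{M}_{12}}$ corresponds to $\overline{\cL}=(\cO(1),\|\cdot\|_{\hyp})$, with $\Delta^n$ going to $x_1^n$, so $M_{12n}(\Gamma,\Z)\cong \rH^0(\PP^1_{\Z},\cO(n))$ preserving Petersson sup-norms, $\ginf_{\hyp}=-\log\|\Delta\|_{\Pet}$, and $\mathscr{P}^{\overline{\Z}}_{\log}(\mathcal{Y}(\C))$ matches $\mathscr{P}^{\overline{\Z}}_{\log}(\C)$ through $j_\ast$. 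Invoking Theorem~\ref{Faltings_height_1} then yields at once
\begin{displaymath}
\sup_{n,f}\frac{-\log\|f\|_{\Pet,\infty}}{n} \;=\; 12\ess(\Ht_{\mathrm{F}}) \;=\; \inf_{\mu}\int -\log\|\Delta\|_{\Pet}\,\dd\mu.
\end{displaymath}

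The only new content is the equality with $\sup_{n,f} -\log\|f\|_{L^2}/n$. One direction is immediate: since $\mu_{\hyp}$ is a probability measure, $\|f\|_{L^2}\le \|f\|_{\Pet,\infty}$, hence $\sup_{n,f} -\log\|f\|_{L^2}/n \geq 12\ess(\Ht_{\mathrm{F}})$. For the reverse inequality, the plan is to apply a Bergman/Gromov-type comparison for sections of $\cO(n)$ on $\PP^1$ with the hyperbolic metric, namely an estimate of the form $\|f\|_{\Pet,\infty}\le C(n)\|f\|_{L^2}$ with $C(n)=O(\sqrt{n})$, so that
\begin{displaymath}
  \frac{-\log\|f\|_{L^2}}{n} \;\le\; \frac{-\log\|f\|_{\Pet,\infty}}{n} + \frac{O(\log n)}{n}.
\end{displaymath}
The error term $O(\log n)/n$ is uniform in $f$. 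Multiplicativity $\|fg\|_{\Pet,\infty}\le \|f\|_{\Pet,\infty}\|g\|_{\Pet,\infty}$ shows that the function $n\mapsto n\cdot \sup_f (-\log\|f\|_{\Pet,\infty}/n)$ is superadditive, so by Fekete's lemma the sup is attained in the limit $n\to\infty$; the $L^2$ sup therefore differs only by the vanishing error and coincides with $12\ess(\Ht_{\mathrm{F}})$.

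The main obstacle is the Bergman comparison itself: the hyperbolic weight $(4\pi\Im\tau)^{6n}$ makes $\|\cdot\|_{\hyp}$ singular at the cusp $(1\colon 0)$, so one cannot quote the standard Bergman kernel bound for smooth metrics on compact K\"ahler manifolds. One way to handle this is to truncate the metric away from the cusp on a shrinking neighbourhood $\{|z|\le R_n\}$, apply the classical estimate on the compact locus, and separately control the cuspidal contribution using that Petersson-normalised modular forms decay rapidly there; another is to invoke the recent adelic line bundle formalism of Yuan--Zhang, in which an analogous $L^2$--sup comparison holds for the enlarged class of adelic metrics that the Green function $\ginf_{\hyp}$ belongs to by Remark~\ref{rem:1}.
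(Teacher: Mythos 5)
Your overall structure matches the paper's: the $j$-isomorphism transports everything to $\PP^1_{\Z}$, where Theorem~\ref{Faltings_height_1} gives the equalities among the Petersson-sup quantity, $12\ess(\Ht_{\mathrm{F}})$, and the measure infimum; the only new content is the comparison between the $L^2$-sup and the Petersson-sup quantities. The paper disposes of that comparison in one line by citing Chinburg--Guignard--Soul\'e (Lemma~3.4.5) for the subexponential distortion $\|f\|_{\Pet,\infty}\le e^{o(n)}\|f\|_{L^2}$, whereas you reconstruct it from scratch via a Bergman-kernel estimate; you correctly identify that the standard compact-case bound does not apply because the hyperbolic weight is singular at the cusp, and your two suggested workarounds (truncation plus rapid cuspidal decay of Petersson-normalized forms, or the Yuan--Zhang adelic framework) are both plausible directions, although neither is carried out.

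There is, however, a genuine gap at the very end of your argument. Writing $a_n=\sup_f -\tfrac{1}{n}\log\|f\|_{\Pet,\infty}$ and $b_n=\sup_f -\tfrac{1}{n}\log\|f\|_{L^2}$, your Fekete/superadditivity step shows $\sup_n a_n=\lim_n a_n=L$, and the distortion estimate gives $a_n\le b_n\le a_n+O(\tfrac{\log n}{n})$, hence $\lim_n b_n=L$. You conclude ``the $L^2$ sup therefore differs only by the vanishing error and coincides with $12\ess(\Ht_{\mathrm{F}})$,'' but the theorem asserts $\sup_n b_n=L$, not merely $\lim_n b_n=L$. Since $b_n\ge a_n$, you automatically get $\sup_n b_n\ge L$; the missing inequality $\sup_n b_n\le L$ requires $b_n\le L$ for every fixed $n$, i.e.\ $\|f\|_{L^2}\ge e^{-nL}$ for all $n$ and all integral $f$, and this does not follow from $b_n-a_n\to 0$ together with $a_n\le L$: the bound $b_n\le a_n+\tfrac{\log C(n)}{n}$ only gives $b_n\le L+\tfrac{\log C(n)}{n}$, which is strictly larger than $L$ for small $n$. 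You would need either a distortion bound that is uniform in the stronger sense $\|f\|_{\Pet,\infty}\le C\|f\|_{L^2}$ with $C$ absolute (unlikely), or a separate arithmetic argument that the first minimum of the lattice $M_{12n}(\Gamma,\Z)$ for the $L^2$-Petersson norm is bounded below by $e^{-nL}$ (e.g.\ via arithmetic Riemann--Roch and Minkowski). The paper sidesteps this by citing the lemma for the whole first equality; if you go the self-contained route, this step must be addressed explicitly rather than absorbed into the phrase ``differs only by the vanishing error.''
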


\begin{proof}
	The first equality follows from \cite[Lemma
        3.4.5]{Chinburg-Guignard-Soule}, where it is proved that the
        distorsion of sup-norm and $L^2$-norm  is subexponential. The
        rest are just a reformulation of Theorem
        \ref{Faltings_height_1}. 
\end{proof}

\begin{remark}
	In \cite{BurgosGil_essMinFaltings}, the authors used
	\[
	\sup_{\substack{n\geq 1\\ f \in
            \mathcal{M}_{12n}(\Gamma,\Z )\setminus\{0\}}}
        - \frac{\log\|f\|_{\Pet,\infty}}{n} \leq
        12\ess(\Ht_{\mathrm{F}}) \leq \inf_{\operatorname{cap}(K)=1}
        \int \ginf_{\hyp} \,\dd \mu_K
      \]
      to give numerical estimates of $\ess(\Ht_{\mathrm{F}})$. Here we
      have proved that the lower bound indeed reaches
      $\ess(\Ht_{\mathrm{F}})$, while for the upper bound to
      reach  $\ess(\Ht_{\mathrm{F}})$, we may
      need  to consider more general measures
      $\mathscr{P}_{\log}^{\overline{\Z }}(\C )$ than
      just equilibrium measures of compact sets of capacity
      one.
\end{remark}

Theorem \ref{ess_attained_by_integer} specializes to
\begin{theorem} \label{Faltings_height_4}
	The essential minimum of Faltings' height can be attained by
        a sequence of elliptic curves with good reduction everywhere. 
\end{theorem}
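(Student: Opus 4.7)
The plan is to combine Corollary \ref{ess_attained_by_integer}, applied to the specific Green function $\ginf_{\hyp}$ of Section \ref{sec:essent-minim-falt}, with a classical criterion for potential good reduction. Since Theorem \ref{Faltings_height_1} identifies $12\,\Ht_{\mathrm{F}}$ with $\Ht_{\ginf_{\hyp}}$ and $\ginf_{\hyp}$ is verified there to be a Green function in the sense of Definition \ref{log singular}, Corollary \ref{ess_attained_by_integer} furnishes a sequence of \emph{distinct algebraic integers} $(\alpha_n)\subseteq \overline{\Z}$ with $\Ht_{\mathrm{F}}(\alpha_n)$ monotonically decreasing to $\ess(\Ht_{\mathrm{F}})$.

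For each $n$, let $E_n$ be an elliptic curve defined over a number field $K_n$ with $j(E_n)=\alpha_n$. The key classical input is the Serre--Tate criterion of potential good reduction: an elliptic curve over a number field has potential good reduction at a non-archimedean place $v$ if and only if its $j$-invariant is integral at $v$. Since $\alpha_n\in \overline{\Z}$, every non-archimedean place of $K_n$ is such a place, so $E_n$ has potential good reduction at every finite place. Hence there exists a finite extension $L_n/K_n$ over which $E_n$ acquires good reduction everywhere.

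Finally, we use that $\Ht_{\mathrm{F}}$ is the \emph{stable} Faltings height, which is invariant under finite base change. Thus $\Ht_{\mathrm{Fal}}(E_n\otimes_{K_n} L_n)=\Ht_{\mathrm{Fal}}(E_n)=\Ht_{\mathrm{F}}(\alpha_n)$, and the sequence of elliptic curves $E_n\otimes_{K_n}L_n$ has good reduction everywhere while still realizing the essential minimum as a limit. The $E_n\otimes_{K_n}L_n$ remain pairwise non-isomorphic over $\overline{\Q}$ because their $j$-invariants $\alpha_n$ are distinct.

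There is no serious obstacle here: the bulk of the work has been carried out in Corollary \ref{ess_attained_by_integer}, and the only additional ingredient is the standard fact linking integrality of the $j$-invariant to potential good reduction. The only minor point to be careful about is the invariance of the stable Faltings height under the base change $K_n\subseteq L_n$, which is standard (and already implicit in the way $\Ht_{\mathrm{F}}$ was defined in Section \ref{sec:essent-minim-falt} via the $j$-invariant alone, independently of a choice of model).
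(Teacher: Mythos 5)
Your proof is correct and follows exactly the route the paper takes: apply Corollary~\ref{ess_attained_by_integer} to $\ginf_{\hyp}$ to get algebraic integers $\alpha_n$ realizing the essential minimum, invoke the integrality of $j(E_n)$ to deduce potential good reduction everywhere, pass to a finite extension to get actual good reduction, and use that the stable Faltings height is insensitive to that base change. Nothing to add.
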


In fact, Theorem \ref{ess_attained_by_integer} says that we can approach the essential minimum of Faltings' height by elliptic curves with integral $j$-invariants hence having potentially good reduction everywhere \cite[Chapter VII. Proposition 5.5]{Silverman_book}. So after a finite field extension they become good reduction everywhere.


\section{The computability of the essential minimum}
\label{sec:comp-essent-minim}

For a particular height function, it is not obvious how to compute its essential minimum. 
{As a consequence of the strong duality Theorem \ref{strong_duality_essential_minimum}, we can construct both a decreasing sequence and an increasing sequence converging to the essential minimum,}
thus showing that this invariant is a computable real number.
The obtained algorithm is far from being practical, but might be a
first step in the search of an effective procedure for this problem.

To make these ideas precise we will use the theory of computability.
We refer the reader to \cite{PI:cap} for the preliminaries on this
theory.

\subsection{Computability}
\label{sec:computable-numbers}
We first recall the notion of computable number in one of its
equivalent definitions.

\begin{definition}
  \label{def:2}
  A real number $r$ is \emph{computable} if there exists a Turing
  machine  that given any rational number $\varepsilon >0$ produces  a rational number $q$ with
  $|r-q|<\varepsilon $. A complex number is \emph{computable} if both its
  real and imaginary part are computable.
\end{definition}

The set of computable numbers is a countable subfield of $\C$ that
contains all algebraic numbers, and as a consequence most real numbers
are non-computable.  Nevertheless it is very difficult to give a
concrete non-computable number. Examples of such numbers are Chaitin's
constants, which are associated to Turing's halting problem.

There are weaker notions of computability.

\begin{definition}
  \label{def:3}
  A real number $r$ is \emph{left computable} if there is a Turing machine
  that given a natural number $n$ produces a rational number $x_n$ such that
  $\sup_{n}x_n=r$.  Similarly $r$ is \emph{right computable} if there
  is a Turing machine that given $n$ produces a rational number $y_n$ such that
  $\inf_{n} x_n=r$.
\end{definition}

\begin{lemma}
  A real number is computable if and only if it is both left and right computable.
\end{lemma}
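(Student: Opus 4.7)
The plan is to prove both directions directly from the definitions. For the forward implication, suppose $r$ is computable, so there is a Turing machine $M$ that, on input $\varepsilon \in \Q_{>0}$, outputs a rational $q$ with $|r - q|<\varepsilon$. I would construct the left-approximating machine as follows: on input $n \in \N$, run $M$ with precision $1/n$ to obtain $q_n$, and output $x_n \coloneqq q_n - 1/n$. Since $q_n \in (r-1/n, r+1/n)$, we have $x_n < r$ for all $n$ (hence $\sup_n x_n \leq r$) and $x_n > r - 2/n$ (hence $\sup_n x_n \geq r$), so $\sup_n x_n = r$ and $r$ is left computable. The symmetric construction $y_n \coloneqq q_n + 1/n$ shows that $r$ is right computable.

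For the reverse implication, suppose $r$ is both left and right computable, with Turing machines producing rational sequences $(x_n)$ and $(y_n)$ satisfying $\sup_n x_n = r = \inf_n y_n$. The key observation is that although these sequences need not be monotonic, the running maxima $X_n \coloneqq \max\{x_1, \ldots, x_n\}$ and running minima $Y_n \coloneqq \min\{y_1, \ldots, y_n\}$ are monotonic, are computable from the original sequences, and still satisfy $\sup_n X_n = r = \inf_n Y_n$ together with $X_n \leq r \leq Y_n$ for every $n$. Given a target precision $\varepsilon \in \Q_{>0}$, the computability algorithm interleaves the computations of $x_1, y_1, x_2, y_2, \ldots$, updates $X_n$ and $Y_n$ at each step, and halts at the first index $n$ with $Y_n - X_n < \varepsilon$, outputting $X_n$. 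The resulting rational satisfies $|r - X_n| \leq Y_n - X_n < \varepsilon$, as required.

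The only substantive point is showing that this halting procedure terminates for every $\varepsilon$. Since $X_n$ is non-decreasing with supremum $r$ and $Y_n$ is non-increasing with infimum $r$, there exist $n_1, n_2$ with $r - X_{n_1} < \varepsilon/2$ and $Y_{n_2} - r < \varepsilon/2$; taking $n \coloneqq \max(n_1, n_2)$ yields $Y_n - X_n < \varepsilon$, so the halting condition is reached in finitely many steps. No deeper ingredient is needed: the argument is purely a matter of unwinding Definitions \ref{def:2} and \ref{def:3}, and the main conceptual point is simply that passing to running extrema converts the a priori non-monotonic approximations $(x_n),(y_n)$ into monotonic ones that sandwich $r$.
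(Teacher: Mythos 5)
Your proposal is correct and takes essentially the same approach as the paper: the forward direction via $x_n = q_n - 1/n$, $y_n = q_n + 1/n$ matches verbatim, and in the reverse direction you use exactly the paper's device of running extrema ($\alpha_N = \max_{n\le N} x_n$, $\beta_N = \min_{n\le N} y_n$ in the paper's notation) with the halting criterion $\beta_N - \alpha_N < \varepsilon$. Your explicit termination argument simply spells out a step the paper leaves implicit.
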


\begin{proof}
  Assume that $r$ is computable. Then there is a Turing machine that
  given $n$ produces a rational number $z_n$ with $|r-z_{n}|<1/n$. The
  sequences $x_n=z_n-1/n$ and $y_n=z_n+1/n$ show that $ r$ is both
  left and right computable.

  Now assume that $r$ is both right and left computable and
  $0<\varepsilon\in \Q $. Then there is a Turing machine that given a
  natural number $N$ produces $\alpha _{N}=\max_{n\le N} x_n$ and
  $\beta _{N}=\min _{n\le N}y_{n}$. When
  $\beta _{N}-\alpha _{N}<\varepsilon $ we have reached the desired
  precision and the algorithm stops.
\end{proof}

\begin{definition}
  \label{def:4}
  A function $f\colon \N\longrightarrow \N$ is \emph{computable} if
  there is a Turing machine that given input $n$ produces the output
  $f(n)$.
\end{definition}





The Green functions that will be well suited for our computations are
those which are computable in appropriate compact sets and have  effective
asymptotics.

\begin{definition} A Green function
  $\ginf \colon \C\longrightarrow \R$ is \emph{computable} if it
  satisfies the conditions:
  \begin{enumerate}
  \item for any rational rectangle $E$ {(i.e. $E = \{ z \in \mathbb{C} \colon a_1 \leq \operatorname{Re}(z) \leq b_1, a_2 \leq \operatorname{Im}(z) \leq b_2 \}$ for rational numbers $a_1<b_1, a_2<b_2$)} the restriction $\ginf|_{E} \colon
    E \longrightarrow \R$ is computable in the sense of \cite[Section
    0.3, Definition A]{PI:cap};
  \item there exists a computable function
    $f \colon \N\longrightarrow \N$ such that
	\begin{displaymath}
          \big| \ginf(z)-\log|z| \big|\le\frac{1}{n} \log|z| \quad \text{  for all } z\in \C \text{ with }
          |z|>f(n).
    \end{displaymath}
  \end{enumerate}
\end{definition}

The following is our main result in this section.

\begin{theorem} \label{ess_min_computable} Let $\ginf$ be a computable
  Green function. Then $\ess(\Ht_{\ginf})$ is a computable real number.
\end{theorem}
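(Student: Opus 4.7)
The plan is to use strong duality (Theorem \ref{strong_duality_essential_minimum}) together with the countable dense descriptions of the dual and primal sides (Propositions \ref{prop:5} and \ref{mu_PQ_construction}) to produce an algorithmically enumerable, increasing sequence of computable rational lower bounds and an algorithmically enumerable, decreasing sequence of computable rational upper bounds, both with limit $\ess(\Ht_\ginf)$. When the two running approximations come within a prescribed $\varepsilon$ of each other the algorithm halts and outputs an $\varepsilon$-approximation, showing that $\ess(\Ht_\ginf)$ is both left and right computable, hence computable.

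For the left (lower) side, I would use Proposition \ref{prop:5} and Theorem \ref{strong_duality_essential_minimum} to identify $\ess(\Ht_\ginf)=\dual'_{\Q}(\ginf)$ as the supremum over the computably enumerable set of tuples $\tau=((a_1,\dots,a_k),(Q_1,\dots,Q_k))$ with $a_i\in\Q_{\geq 0}$, $Q_i\in\Z[x]$, and $\sum_i a_i\deg Q_i<1$. For each such $\tau$ one must compute $\inf_{z\in\C} f_\tau(z)$, where $f_\tau(z)\coloneqq \ginf(z)-\sum_i a_i\log|Q_i(z)|$. The strict inequality, combined with the effective asymptotic condition on $\ginf$, yields a computable radius $R$ beyond which $f_\tau(z)\geq 0$; and the blow-up $-a_i\log|Q_i|\to+\infty$ at each root of $Q_i$ yields computable radii $\delta_i$ around these roots on which the same bound holds. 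On the complementary closed rational region $f_\tau$ is a computable function in the sense of \cite{PI:cap}, so $\inf f_\tau$ is computable to arbitrary rational precision by a standard algorithm. Taking running maxima over an enumeration of $\tau$'s produces the left-approximating sequence.

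For the right (upper) side, I would use Proposition \ref{mu_PQ_construction}, which supplies a countable family $\{\mu_{P,Q}\}$, indexed by pairs of distinct monic irreducible polynomials in $\Z[x]$, that is $\log$-weakly dense in $\mathscr{P}^{\overline{\Z}}_{\log}(\C)$. Because $\ginf$ has logarithmic growth, the functional $\mu\mapsto\int \ginf\,\dd\mu$ is $\log$-weakly continuous, so by Remark \ref{rem:21} and Theorem \ref{strong_duality_essential_minimum} we have $\ess(\Ht_\ginf)=\inf_{P,Q}\int \ginf\,\dd\mu_{P,Q}$. Each $\mu_{P,Q}$ is the pullback of Haar measure on $S^1$ under the rational map $P^{\deg Q+1}/Q^{\deg P}$ and is supported on a compact lemniscate lying in a computable rational rectangle; unfolding this pullback description reduces $\int \ginf\,\dd\mu_{P,Q}$ to a computable integral over $S^1$ of $\ginf$ evaluated on preimages, yielding a computable real uniformly in $(P,Q)$. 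Running minima over an enumeration of $(P,Q)$ give the right-approximating sequence. The main obstacle I anticipate lies in the effective uniform computation of $\inf_z f_\tau$: the strict inequality $\sum a_i\deg Q_i<1$ provided by Proposition \ref{prop:5} is what converts the asymptotic $\ginf\sim\log|z|$ at infinity into a coercivity statement $f_\tau(z)\to+\infty$ with an effective rate, but producing uniformly in $\tau$ a closed rational rectangle bounded away from the zeros of $\prod Q_i$ on which $\inf f_\tau$ is attained still requires care. As noted in the introduction, the resulting algorithm is hopelessly inefficient in practice, depending on an unspecified enumeration of the two countable parameter spaces.
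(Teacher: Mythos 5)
Your proposal matches the paper's proof in overall architecture: strong duality (Theorem~\ref{strong_duality_essential_minimum}) combined with Proposition~\ref{prop:5} for a computably enumerable family of lower bounds, and Propositions~\ref{mu_PQ_construction} and \ref{prop:7} for a computably enumerable family of upper bounds, meeting in the middle to give a halting criterion. The upper-bound side is identical to the paper's.

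The one technical difference is on the lower-bound side, in how you compute $\inf_{z\in\C}\varphi_{\bf a}(z)$. You propose excising small balls around the roots of $\prod_i Q_i$ (where $-a_i\log|Q_i|\to+\infty$) and restricting to the compact remainder; you yourself flag that producing these radii and the surrounding rectangle ``requires care.'' The paper sidesteps the singularities entirely: it first determines a rational rectangle $E$ outside of which $\varphi_{\bf a}$ is provably large (using the strict inequality $\sum a_i\deg Q_i<1$ and the effective asymptotic of $\ginf$), and a constant $M$ such that $\varphi_{\bf a}\le M$ somewhere in $E$, and then observes that
$\inf_{z\in\C}\varphi_{\bf a}(z)=\inf_{z\in E}\min\{\varphi_{\bf a}(z),M\}$,
where the truncated function $\min\{\varphi_{\bf a},M\}$ is continuous and bounded on all of $E$, hence computable by \cite[Section 0.6, Theorem 7]{PI:cap}. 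This $\min\{\cdot,M\}$ trick makes the blow-up near the roots irrelevant, so the extra root-avoidance bookkeeping in your version is unnecessary. Both routes are correct, but the paper's truncation is the cleaner way to resolve the obstacle you correctly identified.
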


The proof will be carried out in the next two sections, where we will
see that the essential minimum is both left and right computable.

\subsection{Left computability}

 By Proposition~\ref{prop:5} we have
\[
\ess(\Ht_{\ginf}) = \sup_{\Lambda} \inf_{z \in \C } \Big(\ginf(z)-\sum_{i=1}^{k} a_{i}\log |Q_{i}(z)|\Big)
\]
with
\begin{displaymath}
  \Lambda = \Big\{ (Q_{1},\dots,Q_{k},a_{1},\dots,a_{k}) \, \Big|\  k \in
  \N , Q_i \in \Z[x]\setminus\{0\}, a_i \in \Q_+ \text{ and } \sum_{i=1}^k a_i
  \deg(Q_i) <1 \Big\}. 
\end{displaymath}
Since the index set $\Lambda$ can be effectively enumerated, the left
computability of $\ess(\Ht_{g})$ is a direct consequence of the next
statement.

\begin{proposition}
  Assume that $\ginf$ is a computable Green function.  Let
  $(Q_{1},\dots,Q_{k},a_{1},\dots,a_{k}) \in \Lambda$ and set
  $\varphi =\ginf-\sum_{i=1}^{k} a_{i}\log |Q_{i}|$.  Then
  $ \inf_{z \in \C } \varphi(z)$ is a computable real number.
\end{proposition}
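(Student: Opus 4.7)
The plan is to reduce the computation of $\inf_{z\in\C}\varphi(z)$ to the evaluation of a computable continuous function on an effectively determined compact subset of $\C$, and then to approximate that infimum by sampling on a rational grid. The starting point is that $\varphi$ blows up to $+\infty$ near each root of the $Q_{i}$, and since $\sum_{i} a_{i}\deg(Q_{i}) = 1-\delta$ for some rational $\delta > 0$, the effective asymptotic for $\ginf$ gives $\varphi(z) \geq (\delta/2)\log|z| - C$ for $|z|$ sufficiently large. Hence $\inf\varphi$ is a finite real number attained on a compact set, and it suffices to localize the search effectively.

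First I would fix a rational point $z_{0}$ with $\prod_{i} Q_{i}(z_{0}) \neq 0$ and take $M_{0} := \lceil\varphi(z_{0})\rceil + 1 \in \Q$, a computable upper bound for $\inf\varphi$: the term $\ginf(z_{0})$ is rationally approximable since $\ginf$ is computable on rational rectangles by hypothesis, while each $a_{i}\log|Q_{i}(z_{0})|$ is the rational multiple of the logarithm of a nonzero rational. Next, using the computable function $f$ from the definition of a computable Green function together with the elementary estimate $\log|Q_{i}(z)| = \deg(Q_{i})\log|z| + O(1)$ (with effective $O(1)$ coming from the leading coefficient of $Q_{i}$), I would produce a computable integer $R$ such that $\varphi(z) > M_{0}$ for all $|z| \geq R$. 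The roots of each $Q_{i}$ are algebraic numbers computable with arbitrary precision, and around each root $\alpha_{i,j}$ I would assign a small rational open rectangle $V_{i,j}$ on which the singular term $-a_{i}\log|Q_{i}|$ dominates an effective upper bound for $\ginf$ on $\overline{D(0,R)}$ and the remaining $-\sum_{k\neq i} a_{k}\log|Q_{k}|$ (the latter bounded once $V_{i,j}$ is chosen to stay away from the roots of the $Q_{k}$, $k\neq i$). Setting $K := \overline{D(0,R)}\setminus\bigcup_{i,j}V_{i,j}$, we obtain $\inf_{\C}\varphi = \inf_{K}\varphi$, and $K$ is covered by finitely many rational rectangles on which $\ginf$ and each $\log|Q_{i}|$ are computable with effectively computable modulus of continuity.

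Given a rational $\varepsilon > 0$, I would then choose a rational grid spacing $\eta$ so that the modulus of continuity of $\varphi$ on $K$ at scale $\eta$ is at most $\varepsilon/2$, compute $\varepsilon/2$-rational approximations of $\varphi$ at each of the finitely many grid points in $K$, and return the minimum of these values; this gives $\inf\varphi$ to precision $\varepsilon$, establishing computability. The main obstacle is making step two genuinely effective, concretely the uniform choice of the rectangles $V_{i,j}$: one needs effective root separation for each $Q_{i}$ (standard from root isolation algorithms) combined with an effective upper bound on $\ginf$ over $\overline{D(0,R)}$ (which follows from computability of $\ginf$ on rational rectangles, since a computable continuous function on a computable compact set has a computable sup) and an effective lower bound for $|Q_{i}(z)|$ as $z$ approaches $\alpha_{i,j}$. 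These estimates are all implicit in the definition of a computable Green function and in the integrality of the $Q_{i}$, but the proof consists in making each one explicit in terms of the input data $(Q_{1},\dots,Q_{k},a_{1},\dots,a_{k})\in \Lambda$.
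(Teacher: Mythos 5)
Your proof is correct in outline but takes a genuinely different route from the paper's. You \emph{excise} small rational rectangles around the roots of the $Q_i$ (where $\varphi\to+\infty$) to obtain a compact set $K$ on which $\varphi$ is finite and computable with a computable modulus of continuity, and then grid-search $K$. The paper instead \emph{truncates}: it determines a rational rectangle $E$ where the infimum is attained and an integer $M$ with $\inf_E\varphi\le M$, then observes that $\inf_{\C}\varphi=\inf_{E}\min\{\varphi,M\}$. The truncated function $\min\{\varphi,M\}$ extends continuously across the roots (it is identically $M$ near them), so it is a computable function on the whole rectangle $E$, and the paper concludes by invoking the fact from Pour-El--Richards that the infimum of a computable function on a rational rectangle is computable. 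The truncation trick buys two things. First, it sidesteps the effective root-isolation and root-separation machinery your excision step needs, including a wrinkle your plan implicitly glosses over: the $Q_i\in\Z[x]$ in $\Lambda$ need not be irreducible or coprime, so distinct $Q_i$ may share roots and any $Q_i$ may have repeated roots, meaning the singular term that must dominate near a given root may involve several of the $Q_i$ at once; this is fixable but adds bookkeeping. Second, it avoids re-deriving the grid-search lemma from scratch by citing it. Your approach is more hands-on and makes the effectivity ingredients explicit, but the paper's is the cleaner argument.
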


\begin{proof}
  Since $\ginf$ is a computable Green function and
  $\sum a_i \deg(Q_i)<1$, we can determine a rational rectangle $E$ such that the
  infimum of $\varphi$ is attained in $E$.  Similarly we
  can determine $M \in \mathbb{N}$ such that there is
  $z \in E$ with $\varphi(z) \le M$ and so 
  \begin{displaymath}
    \inf_{z\in \C} \varphi(z) = \inf_{z\in E} \min \{\varphi(z),M\}.
  \end{displaymath}
  Then this infimum is computable because the restriction of
  $\min\{\varphi,M\}$ to $E$ is a computable function \cite[Section
  0.6, Theorem~7]{PI:cap}.
\end{proof}

\subsection{Right computability}

For the right computability, we look at the other side of the strong
duality property. By Theorem \ref{strong_duality_essential_minimum}
and Theorem \ref{mu_PQ_construction} we have
\[
\ess(\Ht_{\ginf}) = \inf  _{(P,Q) \in \Theta} \int \ginf \,\dd\mu_{P,Q}
\]
for the index set
\begin{displaymath}
  \Theta=\big\{ (P, Q) \, \big| \ P,Q \in \Z[x] \setminus \Z \text{ with } P\ne Q \text{ monic and irreducible}\big\},
\end{displaymath}
and where for each $(P, Q) \in \Theta$ we denote by
$\mu_{P,Q} \in \mathscr{P}^{\overline{\Z}}_{\log}(\C )$ the 
measure~in~\eqref{eq:31}.

Since the index set $\Theta$ can also be effectively enumerated, the
right computability of the essential minimum follows from the next
result.

\begin{proposition}
  \label{prop:7}
  Assume that $\ginf$ is a computable Green function and let
  $(P,Q)\in \Theta$. Then the integral
  $\displaystyle \int \ginf \,\dd\mu_{P,Q} $ is computable.
\end{proposition}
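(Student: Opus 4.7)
The plan is to realize $\int\ginf\,\dd\mu_{P,Q}$ as the integral over $[0,1]$ of a computable continuous function, using the pullback description of $\mu_{P,Q}$. Setting $d=\deg(P)$ and $e=\deg(Q)$, I would first extract an effective radius $R>0$ from the coefficients of $P$ and $Q$ so that $\supp(\mu_{P,Q})\subseteq\{|P|^{e+1}=|Q|^d\}$ lies in the rational rectangle $E=[-R,R]^2$; on $E$, the function $\ginf$ is computable by assumption and in particular admits a computable modulus of continuity $\omega$. Using the identity $\mu_{P,Q}=(d(e+1))^{-1}\varphi_{P,Q}^*\mu_{S^1}$ from the definition of $\mu_{P,Q}$, I would rewrite
\begin{displaymath}
\int\ginf\,\dd\mu_{P,Q} \;=\; \frac{1}{d(e+1)}\int_0^1 G(\theta)\,\dd\theta, \quad \text{where } G(\theta)\coloneqq\sum_{w}\ginf(w),
\end{displaymath}
the sum running over the $N\coloneqq d(e+1)$ roots, counted with multiplicity, of the polynomial $F_\theta(w)\coloneqq P(w)^{e+1}-e^{2\pi i\theta}Q(w)^d$; all these roots lie in $E$.

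The core of the argument is to exhibit $G$ as a computable continuous function on $[0,1]$. Continuity is a standard consequence of the fact that the multiset of roots of a monic univariate polynomial depends continuously on its coefficients. For computable evaluation at a dyadic $\theta$, the coefficients of $F_\theta$ are computable complex numbers, so its $N$ roots with multiplicity can be approximated to any prescribed precision by any standard computable root-finding algorithm (for instance Sch\"onhage's splitting circle method), and $\ginf$ is then evaluated on these approximate roots using its computability on $E$ together with $\omega$. A uniform modulus of continuity for $G$ can be produced by combining $\omega$ with a quantitative Ostrowski-type estimate of the form $\max_i|w_i(\theta)-w_i(\theta')|\le C|\theta-\theta'|^{1/N}$ for a suitable labelling of roots, together with the trivial Lipschitz dependence of the coefficients of $F_\theta$ on $\theta$.

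I expect the main technical difficulty to be controlling $G$ near the finitely many values of $\theta$ at which $F_\theta$ has multiple roots: there, individual root branches collide and the perturbation bound ceases to be Lipschitz. However, $G$ depends only on the root multiset and not on any choice of labelling, so it remains continuous across these degeneracies, and the H\"older-type modulus above is still sufficient for the computability arguments. Once $G$ is realized as a computable continuous function on $[0,1]$, its Lebesgue integral is a computable real number by standard results in computable analysis (cf.\ \cite[Section 0.6, Theorem~7]{PI:cap}), which finishes the proof.
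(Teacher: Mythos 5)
Your proposal is correct and follows essentially the same route as the paper: both reduce the integral to a Riemann integral $\frac{1}{d(e+1)}\int_0^1 G(\theta)\,\dd\theta$ via the pullback of the Haar measure on $S^1$, extract an effective rectangle containing the lemniscate, invoke computability of root-finding for the one-parameter family $P^{e+1}-e^{2\pi i\theta}Q^d$, and then apply computability of integrals of computable functions. The extra detail you supply (computable modulus of continuity for $G$, the H\"older-type bound near multiple roots) is implicitly subsumed in the paper's citation of Specker's result on computable root-finding, but the underlying argument is the same.
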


\begin{proof}
Set for short $d=\deg(P)$ and $e=\deg(Q)$ and consider the function
\begin{displaymath}
\rho \colon [0,1] \longrightarrow \R, \quad 
\rho(\theta)= 	\frac{((\varphi_{P,Q})_* \ginf) (e^{2\pi i\theta})}{d\, (e+1)}.
  \end{displaymath}
  For each $\theta\in [0,1]$ consider also the polynomial
  $S_{\theta}= P^{\deg(Q)+1}-e^{2 \pi i\theta} Q^{\deg(P)} \in \C[X]$. Then
  \begin{equation}
    \label{eq:17}
\rho(\theta)= \frac{1}{d\, (e+1)} \sum_{S_{\theta}(w)=0}e_w\, \ginf (w).
  \end{equation}
  
  There exists an effective $R \in \Q_{>0}$ such that
  $\varphi_{P,Q}^{-1}(S^1) \subseteq [-R,R] \times i[-R, R]$. Since
  $\ginf$ restricted to this rectangle is computable by definition and
  the process of finding the complete set of roots of polynomials is
  computable by \cite{Specker69} (see also \cite[Proposition
  3.13]{MR4887199}) we have that $\rho$ is a computable function, and
  so the Riemann integral
  \[
	\int_0^1 (\varphi_{P,Q})_* \ginf \, (e^{2\pi i \theta}) \, \dd\theta
      \] is computable \cite[Section 0.5, Theorem 5]{PI:cap}.
\end{proof}


\appendix
\section{Duality in linear
  programming} \label{sec:dual-line-optim}

In this appendix we present a coordinate-free formulation of duality in linear
optimization and recall the proof of the strong duality property in
the finite dimensional case.  This allows to place the optimization
problems from Section \ref{section_strong_duality} within a general
framework and to give a more conceptual approach to their duality
properties.

Let $E,F$ be two real vector spaces equipped with a pairing
$\langle -,- \rangle \colon E\times F\longrightarrow \R $. Let
$E\longrightarrow F^{\vee}$ and $F\longrightarrow E^{\vee}$ be the induced linear maps between
these spaces and their duals, that we respectively denote by
$x\longmapsto x^{\dag}$ and $y\longmapsto y^{\dag}$. They are defined by
setting
\begin{displaymath}
x^{\dag}(y)=y^{\dag}(x) = \langle x,y\rangle \quad \text{ for all } x\in E \text{ and } y\in F.
\end{displaymath}
Consider two convex cones $\sigma\subset E$ and $\tau\subset F$. Their duals are the  convex cones 
\begin{displaymath}
  \sigma^{\vee}= \{ u\in E^{\vee} \mid u(x)\ge 0 \text{ for all } x\in \sigma\}, \quad \tau^{\vee}= \{v\in F^{\vee} \mid v(y)\ge 0 \text{ for all } y\in \tau\}.
\end{displaymath}
Let  also $u_{0}\in E^{\vee}$ and $v_{0}\in F^\vee$.

 \begin{definition}
   \label{def:1}
   The \emph{primal problem} and the \emph{dual problem} for this
   datum are the optimization problems respectively given by
   \begin{displaymath}
      \primal=\inf\{ u_{0}(x) \mid   x\in \sigma, \ x^{\dag} -v_{0} \in \tau^{\vee}\} , \quad \dual=\sup\{ v_{0}(y)  \mid y\in \tau , \ u_{0}-y^{\dag} \in \sigma^{\vee}\}.
     \end{displaymath}
     We refer to the quantities $\mathcal{P}$ and $\dual$ as the \emph{optimal
       values} of these problems.
\end{definition}

\begin{remark}
  \label{rem:2}
  The role of the primal and the dual problems can be exchanged:  denote by $\mathcal{P}^{\op}$ and
  $\dual^{\op}$ the primal and dual problems that arise when
  swapping the vector spaces $E$ and $F$ and considering the cones
  $-\tau\subset F$ and $ -\sigma\subset E$ together with the functionals
  $-v_{0}\in F^{\vee}$ and $-u_{0}\in E^{\vee}$. Then it can be easily
  verified that $\mathcal{P}=-\dual^{\op}$ and
  $\dual=-\mathcal{P}^{\op}$.
\end{remark}

The classical problems in linear programming are a particular case of
this framework.

  \begin{example}
\label{exm:2}
Set $E=\R ^{m}$ and
  $F= \R ^{n}$ and given $A\in \R ^{m\times n}$,
  $b\in \R ^{m}$ and $c\in \R ^{n}$ consider the pairing
  and functionals defined by
  \begin{displaymath}
(x,y)\longmapsto\langle x, y \rangle= x^{\transpose}A \, y, \quad x\longmapsto u_{0}(x)= b^{\transpose}x, \quad  y\longmapsto v_{0}(y)=c^{\transpose}y
\end{displaymath}
together with the cones $\sigma=\R ^{m}_{\ge 0}$ and
$\tau=\R ^{n}_{\ge 0}$. Then the associated primal and dual problems
boil down to the usual forms
  \begin{displaymath}
   \primal= \inf\{ b^{\transpose} x  \mid x \ge 0, \ A^{\transpose}x \ge c\} 
    ,\quad 
    \dual= \sup\{c^{\transpose} y \mid y\ge 0, \ A\, y \le b\},
  \end{displaymath}
where  $\le$ and $\ge $ on these real vector spaces  means
  that these inequalities hold coordinate-wise.
\end{example}

The problems from Section \ref{section_strong_duality} also fit within
this framework.  Denote by $\csM_{\log}(\C)$ and
$\mathscr{S}_{\log}(\C)$ the cone and the vector space generated by
$\csP_{\log}(\C)$. The elements of $\csM_{\log}(\C)$ are the measures
on $\C $ that integrate the function $\log^{+}|z|$, whereas
those of $\mathscr{S}_{\log}(\C)$ are the differences of these
measures.
 
We also let $\csP'_{\log}(\C)$ be the set of probability measures
$\mu\in \csP_{\log}(\C)$ that integrate the functions $\log |Q|$ for
all $Q\in \Z[x]$, and we respectively denote by $ \csM'_{\log}(\C)$
and $ \mathscr{S}'_{\log}(\C )$ the cone and vector space
generated by this set of probability measures. 

\begin{example} 
  \label{exm:1}
  Set 
  \begin{displaymath}
    E=\mathscr{S}'_{\log}(\C )  \quad\text{ and }\quad F= \R \oplus \bigoplus_{n\in \mathbb{N}}\R .
  \end{displaymath}
  Fix an enumeration $Q_{1}, Q_{2}, Q_{3}, \dots$ of all nonconstant
  integer polynomials and recall that the elements of $F$ are the
  tuples $ a=(a_{0},a_{1},a_{2},\dots) $ with $ a_{n} =0$ for all but
  a finite number of~$n$'s. We  then consider the pairing
  $E\times F\longrightarrow \R $ defined by
  \begin{displaymath}
    (\mu,a)\longmapsto a_{0}\int \dd\mu +\sum_{n\in \mathbb{N}} a_{n}
    \int  \log|Q_{n}| \, \dd \mu.
  \end{displaymath}
  We also consider the convex cones defined as
  \begin{equation*}
    \sigma =\mathscr{M}_{\log}'(\C ) \subset E \quad \text{ and }
    \quad 
    \tau =\{(a_0,a_1,\dots,a_n,\dots)\mid a_n\ge 0\text{ for all
           }n>0\}\subset F.  
  \end{equation*}
  Let $\ginf \colon \C  \longrightarrow \R $ be a
  continuous function that is asymptotically logarithmic at $\infty$ in the sense of
  Definition~\ref{log singular}, and define the functionals
  $u_{0}\in E^{\vee}$ and $v_{0}\in F^{\vee}$ as
  \begin{displaymath}
    u_{0}(\mu)= \int \ginf \, \dd \mu  \quad\text{ and }\quad  v_{0}(a)= a_{0}.
  \end{displaymath}

  The associated  primal problem amounts to the minimization
  \begin{math}
   \primal(\ginf) = \inf_{\mu}\int \ginf\, \dd\mu
  \end{math}
  over the measures $\mu \in \mathscr{M}_{\log}'(\C )$ such
  that
  \begin{displaymath}
    a_{0}\left(\int \dd\mu -1\right)  + \sum_{n\in \mathbb{N}} a_{n}\int
    \log|Q_{n}| \, \dd\mu \ge 0 
  \end{displaymath}
  for all $a_{0}\in \R $ and $ a_{n}\in \R _{\ge 0}$,
  $n\in \mathbb{N}$, with $a_{n}=0$ for all but a finite number of
  $n$'s. Since $a_0$ is arbitrary, this forces $\mu \in
  \csP_{\log}'(\C)$. Hence, this minimization is over the probability
  measures $\mu \in \csP_{\log}'(\C )$ such that
  $\int \log|Q_{n}| \, \dd \mu \ge 0 $ for all $n$, which coincide with
  those in $ \csP_{\log}(\C )$ satisfying the same
  condition. Hence
  \begin{displaymath}
   \primal(\ginf) = \inf \Big\{ \int \ginf\, \dd\mu
    \, \Big| \ \mu \in \mathscr{P}_{\log}(\C ), \, \int \log|Q_{n}| \, \dd \mu \ge 0 \text{ for all } n\in \mathbb{N}\Big\}
  \end{displaymath}
  as in \eqref{eq:51}. Similarly, the associated dual problem is the
  maximization $ \dual(\ginf) = \sup_{a} a_{0} $ over
  $a\in \tau $ such that
  \begin{equation}
    \label{eq:1}
    \int \ginf \, \dd\mu - a_{0} \int \dd \mu - \sum_{n\in
      \mathbb{N}} a_{n}\int \log|Q_{n}| \, \dd\mu \ge 0 \quad \text{
      for all }
    \mu\in \mathscr{M}_{\log}'(\C ).
  \end{equation}
  This is equivalent to the inequality
  \begin{math}
    \ginf(z) -\sum_{n\in \mathbb{N}} a_{n}\log|Q_{n}(z)| \ge a_{0}
  \end{math}
  for all $z\in \C $, as it can be seen by considering
  (\ref{eq:1}) for the Dirac delta measures $\mu=\delta_{z}$ for all
  $z\in \C  \setminus \overline{\mathbb{Q}}$.  Hence
  \begin{displaymath}
    \dual(\ginf) = \sup_{a\in \tau}   \inf_{x\in \C } \Big( \ginf(x) - \sum_{n\in \mathbb{N}} a_{n}\log |Q_{n}(x)|\Big),
  \end{displaymath}
  in agreement with \eqref{eq:6}.
\end{example}

The weak duality property is the fact that the optimal value of the
primal problem bounds above that of the dual, and follows readily from
the definitions.

\begin{proposition}
  \label{prop:2} We have
  $\mathcal{P}\ge \dual$.
\end{proposition}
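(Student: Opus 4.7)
The plan is to prove weak duality by exhibiting, for any primal-feasible $x$ and any dual-feasible $y$, an algebraic identity that splits the duality gap $u_0(x) - v_0(y)$ into two summands, each of which is the evaluation of an element of a dual cone against an element of the cone itself, and thus nonnegative. Taking the infimum over $x$ and supremum over $y$ will yield $\mathcal{P} \ge \dual$.

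Concretely, pick $x \in \sigma$ with $x^{\dag} - v_0 \in \tau^{\vee}$ (primal-feasible) and $y \in \tau$ with $u_0 - y^{\dag} \in \sigma^{\vee}$ (dual-feasible). The key identity is the trivial decomposition
\begin{equation*}
u_0(x) - v_0(y) = \bigl( u_0 - y^{\dag}\bigr)(x) + \bigl(x^{\dag} - v_0\bigr)(y),
\end{equation*}
which uses nothing more than the symmetry $x^{\dag}(y) = y^{\dag}(x) = \langle x, y\rangle$ built into the definitions of the maps $x \mapsto x^{\dag}$ and $y \mapsto y^{\dag}$. Both summands are manifestly nonnegative: the first because $u_0 - y^{\dag} \in \sigma^{\vee}$ and $x \in \sigma$, the second because $x^{\dag} - v_0 \in \tau^{\vee}$ and $y \in \tau$. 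Hence $u_0(x) \ge v_0(y)$ for every compatible pair of feasible points, and the inequality $\mathcal{P} \ge \dual$ follows at once from the definitions of $\mathcal{P}$ and $\dual$ as an infimum and a supremum, respectively.

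There is essentially no obstacle here: the only thing to watch is the boundary case where the feasible set of the primal or the dual is empty, in which case $\mathcal{P} = +\infty$ or $\dual = -\infty$ and the inequality is trivially valid with the usual conventions on suprema and infima over $\R \cup \{\pm \infty\}$. No topological hypothesis, no closedness of the cones, and no finite dimensionality is needed, which is exactly why weak duality works in the infinite-dimensional settings of Example~\ref{exm:1}.
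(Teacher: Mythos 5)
Your proof is correct and is essentially the same argument as the paper's: the paper writes the chain $u_{0}(x) \ge \langle x,y\rangle \ge v_{0}(y)$, which unpacks to exactly your decomposition $u_0(x) - v_0(y) = (u_0 - y^{\dag})(x) + (x^{\dag} - v_0)(y)$ with both summands nonnegative by the cone/dual-cone pairings. Your remark about the empty-feasibility boundary case is a welcome clarification but not a departure from the paper's approach.
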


\begin{proof}
    For all $x\in \sigma $ and $y\in \tau $ such that $x^{\dag} -v_{0}
  \in \tau^{\vee}$ and $u_{0}-y^{\dag} \in \sigma^{\vee}$ we have
  $u_{0}(x) \ge \langle x,y\rangle \ge v_{0}(y)$, which gives the
  inequality.
\end{proof}

The strong duality property is the equality between the optimal values
of the primal and the dual problem.  We end this appendix by recalling
the proof of this property in the finite dimensional situation. 

\begin{definition}
  The \emph{feasibility set} of the primal problem $\mathcal{P}$
  and the dual problem $\dual$ are
\begin{displaymath}
 S_{\mathcal{P}}=       \{ x\in \sigma \mid x^{\dag} -v_{0} \in \tau^{\vee}\} \subset E
  \quad \text{ and } \quad S_{\dual}= \{ y\in \tau  \mid u_{0}-y^{\dag} \in \sigma^{\vee}\} \subset F.
\end{displaymath}
We say that $\mathcal{P}$ (respectively $\dual$) is
\emph{feasible} if $S_{\mathcal{P}}\not = \emptyset$ (respectively if
$S_{\dual}\ne \emptyset$), and we  say that $\mathcal{P}$
(respectively $\dual$) is \emph{bounded} if the set
$ \{u_{0}(x)\colon x\in S_{\mathcal{P}}\} $ is bounded below
(respectively if the set $\{ v_{0}(y)\colon y\in S_{\dual}\}$ is
bounded above).

We also say that $\mathcal{P}$ (respectively $\dual$) is
\emph{attained} if there exists $x\in S_{\mathcal{P}}$ such that
$\mathcal{P}=u_{0}(x)$ (respectively if there exists
$y\in S_{\dual}$ such that $\dual=v_{0}(y)$).
\end{definition}

The primal problem is feasible and bounded if and only if $\cP\in \R$,
and similarly for the dual problem.  Moreover, the weak duality
property shows that if one of these problems is feasible then the
other is bounded.

\begin{theorem}\label{thm:1} Assume that $E$ and $F$ are finite
  dimensional vector spaces and that $\sigma $ and $\tau $ are closed convex
  cones. The following conditions are equivalent:
  \begin{enumerate}
  \item \label{item:3} the primal problem $\mathcal{P}$ is feasible
    and bounded;
  \item \label{item:4} the dual problem  $\dual$ is  feasible and bounded.
  \end{enumerate}
  If any of these conditions holds then $\cP=\cD \in \R $ and
  both $\mathcal{P}$ and $\dual$ are attained.
\end{theorem}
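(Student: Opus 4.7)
The approach is to adapt the standard finite-dimensional proof of strong duality in linear programming, built on the hyperplane separation theorem. By Remark \ref{rem:2} the primal and dual problems are interchangeable under an explicit sign transformation, so it suffices to prove the following: \emph{if $\cP$ is feasible and bounded, then $\cD$ is feasible, $\cP = \cD \in \R$, and both problems are attained.} The converse implication then follows by applying the statement to the swapped problems, and weak duality (Proposition \ref{prop:2}) already takes care of the easy inequality $\cP \ge \cD$.

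Assume $\cP \in \R$. Form the convex subset of the finite-dimensional space $\R \times F^\vee$ defined by
\[
K = \bigl\{ \bigl(u_0(x) + t,\, x^\dag - v_0 - w\bigr) \,\bigm|\, x \in \sigma,\ t \ge 0,\ w \in \tau^\vee \bigr\}.
\]
A direct verification shows that $(s, 0) \in K$ if and only if there exists a primal-feasible $x$ with $u_0(x) \le s$, equivalently $s \ge \cP$; in particular $(\cP - \varepsilon, 0) \notin K$ for every $\varepsilon > 0$. By the hyperplane separation theorem applied to the disjoint nonempty convex sets $K$ and $\{(\cP - \varepsilon, 0)\}$ in $\R \times F^\vee$, there exists a nonzero linear functional which, via finite-dimensionality $F^{\vee\vee} = F$, we identify with a pair $(b, y) \in \R \times F$ satisfying
\[
b\bigl(u_0(x) + t\bigr) + \langle x, y\rangle - v_0(y) - w(y) \ge b(\cP - \varepsilon) \quad \forall x \in \sigma,\ t \ge 0,\ w \in \tau^\vee.
\]
Letting $t \to +\infty$ forces $b \ge 0$; taking large scalar multiples of any element of $\tau^\vee$ for $w$ forces $w(y) \le 0$ on $\tau^\vee$, hence $-y \in \tau^{\vee\vee} = \tau$ by the bipolar theorem (using that $\tau$ is closed); and scaling $x \in \sigma$ yields $b u_0(x) + \langle x, y \rangle \ge 0$ for every $x \in \sigma$, i.e.\ $b u_0 + y^\dag \in \sigma^\vee$.

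Setting $y' = -y \in \tau$, the previous relation becomes $b u_0 - (y')^\dag \in \sigma^\vee$, and the particular choice $x = t = 0$, $w = 0$ in the separation inequality yields $v_0(y') \ge b(\cP - \varepsilon)$. The main technical obstacle is the exclusion of the degenerate case $b = 0$: here one uses primal feasibility by evaluating the separation inequality on a pair $(x^*, w^*) = (x^*, (x^*)^\dag - v_0) \in S_\cP \times \tau^\vee$, together with suitable scalings of $x^*$ and $w^*$, to force $(b, y) = 0$, a contradiction with nonzero separation. Once $b > 0$ is secured, normalizing to $b = 1$ exhibits $y'$ as a dual-feasible vector with $v_0(y') \ge \cP - \varepsilon$; hence $\cD \ge \cP - \varepsilon$, and letting $\varepsilon \to 0$ combined with weak duality gives $\cP = \cD \in \R$.

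For attainment, the preceding construction produces, as $\varepsilon \to 0$, a family of dual-feasible vectors $\{y'_\varepsilon\}$ with $v_0(y'_\varepsilon) \to \cD$. The dual-feasible set $S_\cD = \tau \cap \{y \in F \mid u_0 - y^\dag \in \sigma^\vee\}$ is closed as an intersection of closed subsets of the finite-dimensional space $F$. A finite-dimensional compactness argument --- working modulo the lineality subspace of $S_\cD$ on which $v_0$ vanishes, which is itself a closed finite-dimensional subspace --- extracts a convergent subsequence whose limit $y^* \in S_\cD$ realizes $v_0(y^*) = \cD$. The attainment of the primal then follows by applying the same argument to the swapped problems via Remark \ref{rem:2}. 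The two main obstacles I anticipate are the careful treatment of the degenerate case $b = 0$ in the separation step and the compactness argument for attainment, both of which rely essentially on finite-dimensionality and on the closedness of $\sigma$ and $\tau$.
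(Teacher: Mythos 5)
Your construction of $K$ is, up to the translation by $(0,-v_0)$, identical to the cone $C=V-\R_{\le 0}\times\tau^\vee$ that the paper separates from $p_\lambda=(\lambda,v_0)$, so the two approaches are really the same geometry; the substantive differences lie in which separation theorem you invoke and how you then dispatch the degenerate case $b=0$.

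The paper invokes the \emph{point--cone separation theorem}: since $C$ is a closed convex cone and $p_\lambda\notin C$ for $\lambda<\lambda_0$, one gets $h$ with $h|_C\ge 0$ and $h(p_\lambda)<0$, hence a \emph{strict} inequality $h|_V>h|_{W_\lambda}$. Then, specializing to $x=x_1\in S_\cP$ and $v=v_1=x_1^\dag-v_0$, the case $b=0$ collapses the strict inequality to $0>0$, which is an immediate contradiction. You instead invoke the separation theorem for a convex set $K$ and a point $p\notin K$, which in general yields only the non-strict inequality $h|_K\ge h(p)$. With this weaker input your proposed exclusion of $b=0$ does not work: setting $b=0$, evaluating the (non-strict) inequality on $x=sx^*$, $w=r\bigl((x^*)^\dag-v_0\bigr)$ with $s,r\ge0$ gives $(s-r)\langle x^*,y\rangle+(r-1)v_0(y)\ge 0$, from which one can only extract $v_0(y)=0$ and $\langle x^*,y\rangle=0$. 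This does \emph{not} force $y=0$: there is no contradiction with $(b,y)\ne 0$, because a nonzero $y$ with $-y\in\tau$, $y^\dag\in\sigma^\vee$, $v_0(y)=0$ is perfectly compatible with the inequality. To salvage the argument you would need to show that $(\cP-\varepsilon,0)\notin\overline{K}$ and then invoke \emph{strict} separation --- which is precisely the ``$C$ is closed'' input the paper uses, and which you nowhere establish. So the $b=0$ step is a genuine gap, not just a detail to be filled.

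Your attainment argument also diverges from the paper's and is on shakier ground. The paper deduces attainment of $\cP$ directly from the closedness of $C$: the set $\{\lambda : p_\lambda\in C\}$ is closed and bounded below, so its infimum $\lambda_0$ is a minimum, and $\cP=\lambda_0$ is attained; attainment of $\cD$ then follows by swapping (Remark \ref{rem:2}). Your plan is instead to take the family $\{y'_\varepsilon\}$, pass to a subsequence ``modulo the lineality subspace of $S_\cD$,'' and extract a limit. But you give no reason why $\{y'_\varepsilon\}$ should be bounded modulo that subspace, and there is none in general: the feasible set $S_\cD$ may be unbounded in directions along which $v_0$ does \emph{not} vanish, so compactness is simply unavailable. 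Closedness of $S_\cD$ (which you correctly note) is not enough; one needs the additional structural fact that the paper extracts from the closed cone $C$. In short: your proof follows the paper's outline, but replaces the strict cone separation (and the resulting attainment argument) by weaker tools, and the two places where this matters --- the exclusion of $b=0$ and the attainment of the optimal values --- are exactly the places where your argument has real holes.
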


\begin{proof}
  First assume that \eqref{item:3} holds and consider the closed
  convex subsets of $\R \times F^{\vee}$ defined~as
  \begin{displaymath}
    V=\{(u_{0}(x), x^{\dag})  \, | \   x\in \sigma \} \quad \text{ and }
    \quad W_{\lambda}=\{(t,v+v_{0}) \mid t\le \lambda, \ v\in \tau^{\vee}\}
    \quad \text{ for } \lambda\in \R .
  \end{displaymath}
  For each $\lambda\in \R $ we have that
  $V\cap W_{\lambda}\ne \emptyset$ if and only if there exists
  $x\in S_{\mathcal{P}}$ such that $u_{0}(x) \le \lambda$.

  We have the decomposition
  $W_{\lambda}=\R _{\le 0}\times \tau^{\vee}+(p,v_{0})$. Hence
  considering the closed convex cone   and the point respectively defined as
  \begin{displaymath}
    C=V-\R _{\le 0}\times \tau=
    \{(u_{0}(x)-t, x^{\dag}-v) : t\in \R _{\le 0}, \ x\in \sigma, \ v\in \tau^{\vee}\} \quad \text{ and } \quad
    p_{\lambda}=(\lambda,v_{0}),
  \end{displaymath}
  the condition $V\cap W_{\lambda}\ne \emptyset$ turns out to be
  equivalent to $p_{\lambda}\in C$.  Since $C$ is a closed cone, this
  condition on $\lambda\in \R $ is closed, and it is also
  nonempty and bounded below because $\mathcal{P}$ is feasible and
  bounded.  Thus setting
  $ \lambda_{0}=\inf\{\lambda \, | \ p_{\lambda}\in C\}$ we have
 \begin{displaymath}
  \primal=\lambda_{0} =\min\{\lambda \, |\  p_{\lambda}\in C\}\in \R .
  \end{displaymath}
  In particular $\mathcal{P}$ is attained.

  Now let $\lambda<\lambda_{0}$. By the point-cone separation theorem
  there exists
  $h\in (\R \times F^{\vee})^{\vee}= \R \oplus F$ such
  that $h|_{C}\ge 0$ and $h(p_{\lambda})<0$, which implies that
  $h|_{V} >h|_{W_{\lambda}}$. Hence
  writing $h=(b,-y)$ with $b\in \R $ and $y\in F$ we
  have
  \begin{equation}
    \label{eq:2}
 b\, u_{0}(x)-\langle x,y\rangle >    b\, t-v(y)-v_{0}(y) \quad \text{   for all }
    t\le \lambda, x\in \sigma, v\in \tau^{\vee}.
  \end{equation}
  Specializing  \eqref{eq:2} to
 $x=x_{1}\in S_{\mathcal{P}}\subset \sigma$ and 
  $v=v_{1}=x_{1}^{\dag}-v_{0}\in \tau^{\vee}$  we get
  $ b\, u_{0}(x_{1}) > b\, t$, which implies that $b>0$ because $t$
  can be an arbitrarily large negative number.

  We   assume without loss of generality that $b=1$. Then 
  \eqref{eq:2} specialized to $t=\lambda$  becomes 
  \begin{equation}
\label{eq:7}
u_{0}(x)-\langle x,y\rangle  >     \lambda-v(y)-v_{0}(y)   
     \quad \text{   for all }
    x\in \sigma, v\in \tau^{\vee}.
  \end{equation}
  Specializing this inequality to $x=0$ gives $v(y)\ge 0$ for all
  $v\in \tau^{\vee}$, whereas taking instead $v=0$ gives
  $(u_{0}-y^{\dag})(x)\ge 0$ for all $x\in \sigma$. Hence
  $u_{0}-y^{\dag}\in \sigma^{\vee}$, and since $\tau $ is assumed to
  be closed we also have that $y\in \tau$, and so the dual problem
  $\dual$ is feasible. We also have that $\dual$ is bounded because
  $\mathcal{P}$ is feasible, thus proving the
  condition~\eqref{item:4}. Moreover~\eqref{eq:7} specialized to $x=0$
  and $v=0$ gives
  \begin{displaymath}
\dual \ge      v_{0}(y)> \lambda, 
  \end{displaymath}
  and since $\lambda$ can be arbitrarily close to $\mathcal{P}$ we
  obtain $\mathcal{P}\le \dual$. Combining with the weak duality
  property (Proposition \ref{prop:2}) we conclude that
  $\mathcal{P}= \dual$, as stated.
    
  Finally, the case when \eqref{item:4} holds we reduce to the
  previous situation using Remark \ref{rem:2}.
 \end{proof}

 \begin{remark}
   \label{rem:4}
   The proof of Theorem \ref{strong_duality} follows this approach,
   with suitable modifications due to the fact that there  we  deal
   with some specific infinite dimensional vectors spaces and cones
   that are not necessarily closed.
 \end{remark}


\bibliography{ref.bib}

@book{PI:cap,
	author = {{Pour-El}, Marian Boykan and Richards, J. Ian},
	date-modified = {2026-01-26 19:13:10 +0100},
	isbn = {3-540-50035-9},
	mrclass = {03F60 (03D80 46-02 46R05 47-02 65Jxx)},
	mrnumber = {1005942},
	mrreviewer = {Rodney\ G.\ Downey},
	pages = {xii+206},
	publisher = {Springer-Verlag, Berlin},
	series = {Perspectives in Mathematical Logic},
	title = {Computability in analysis and physics},
	year = {1989},
	bdsk-url-1 = {https://doi-org.sire.ub.edu/10.1007/978-3-662-21717-7},
	bdsk-url-2 = {https://doi.org/10.1007/978-3-662-21717-7}}

@article {MR4887199,
AUTHOR = {Barbieri, Sebasti\'an and {Carrasco-Vargas}, Nicanor and Rojas,
Crist\'obal},
TITLE = {Effective dynamical systems beyond dimension zero and factors
of {SFT}s},
JOURNAL = {Ergodic Theory Dynam. Systems},
FJOURNAL = {Ergodic Theory and Dynamical Systems},
VOLUME = {45},
YEAR = {2025},
NUMBER = {5},
PAGES = {1329--1369},
ISSN = {0143-3857,1469-4417},
}

@article{BostGilletSoule,
	author = {Jean-Beno{\^\i}t Bost and Gillet, Henri and Soul\'e, Christophe},
	date-added = {2026-01-20 18:51:04 +0100},
	date-modified = {2026-01-20 18:53:25 +0100},
	fjournal = {Journal of the American Mathematical Society},
	issn = {0894-0347,1088-6834},
	journal = {J. Amer. Math. Soc.},
	mrclass = {14G40 (11G35 14C17)},
	mrnumber = {1260106},
	mrreviewer = {Yoichi\ Miyaoka},
	number = {4},
	pages = {903--1027},
	title = {Heights of projective varieties and positive {G}reen forms},
	volume = {7},
	year = {1994},
	bdsk-url-1 = {https://doi.org/10.2307/2152736}}

@incollection{Specker69,
	author = {Specker, Ernst},
	booktitle = {Constructive {A}spects of the {F}undamental {T}heorem of {A}lgebra ({P}roc. {S}ympos., {Z}\"urich-{R}\"uschlikon, 1967)},
	date-modified = {2026-01-26 19:13:32 +0100},
	pages = {321--329},
	publisher = {Wiley-Interscience [A Division of John Wiley \& Sons, Ltd.], London-New York-Sydney},
	title = {The fundamental theorem of algebra in recursive analysis},
	year = {1969}}

@book{Silverman_book,
	author = {Joseph Hillel Silverman},
	date-added = {2026-01-20 18:45:16 +0100},
	date-modified = {2026-01-20 18:45:16 +0100},
	edition = {Second},
	isbn = {978-0-387-09493-9},
	mrclass = {11-02 (11G05 11G20 14H52 14K15)},
	mrnumber = {2514094},
	mrreviewer = {Vasil\cprime\ \=I.\ Andr\=\i\u ichuk},
	pages = {xx+513},
	publisher = {Springer, Dordrecht},
	series = {Graduate Texts in Mathematics},
	title = {The arithmetic of elliptic curves},
	volume = {106},
	year = {2009}}

@article{Faltings_Finiteness,
	author = {Faltings, Gerd},
	date-modified = {2026-01-20 18:26:07 +0100},
	fjournal = {Inventiones Mathematicae},
	journal = {Invent. Math.},
	number = {3},
	pages = {349--366},
	title = {Endlichkeitss\"atze f\"ur abelsche {V}ariet\"aten \"uber {Z}ahlk\"orpern},
	volume = {73},
	year = {1983}}

@book{Burgos_book_2014,
	author = {{Burgos Gil}, Jos\'{e} Ignacio and Philippon, Patrice and Sombra, Mart\'{\i}n},
	date-added = {2026-01-14 09:28:29 +0100},
	date-modified = {2026-01-14 09:28:29 +0100},
	series = {Ast\'{e}risque},
	isbn = {978-2-85629-783-4},
	issn = {0303-1179,2492-5926},
	publisher = {Société Mathématique de France},
	mrclass = {14G40 (14M25 32P05 52A41)},
	mrnumber = {3222615},
	mrreviewer = {Fabien\ Pazuki},
	pages = {vi+222},
	title = {Arithmetic geometry of toric varieties. {M}etrics, measures and heights},
	volume = {360},
	year = {2014}}

@book{Yuan-Zhang,
	author = {Yuan, Xinyi and Zhang, Shou-Wu},
	date-added = {2026-01-13 09:54:13 +0100},
	date-modified = {2026-01-20 18:30:50 +0100},
	isbn = {9780691278704},
	lastchecked = {2026-01-13},
	publisher = {Princeton University Press},
	series = {Annals of Mathematics Studies},
	title = {Adelic line bundles on quasi-projective varieties},
	year = {2026},
	bdsk-url-1 = {https://doi.org/10.1515/9780691278704}}

@misc{szachniewicz2023,
	archiveprefix = {arXiv},
	author = {Micha{\l} Szachniewicz},
	date-added = {2026-01-07 10:20:51 +0100},
	date-modified = {2026-01-20 18:30:38 +0100},
	eprint = {2306.06275},
	primaryclass = {math.LO},
	title = {Existential closedness of {$\overline{\mathbb{Q}}$} as a globally valued field via {A}rakelov geometry},
	year = {2023},
	bdsk-url-1 = {https://arxiv.org/abs/2306.06275}}

@article{Smyth:mtraiII,
	author = {Christopher James Smyth},
	date-modified = {2026-01-20 18:29:38 +0100},
	fjournal = {Mathematics of Computation},
	issn = {0025-5718,1088-6842},
	journal = {Math. Comp.},
	mrclass = {12A15 (10F25)},
	mrnumber = {616373},
	mrreviewer = {Marthe\ Grandet},
	number = {155},
	pages = {205--208},
	title = {On the measure of totally real algebraic integers. {II}},
	volume = {37},
	year = {1981},
	bdsk-url-1 = {https://doi-org.sire.ub.edu/10.2307/2007513},
	bdsk-url-2 = {https://doi.org/10.2307/2007513}}

@article{Smyth:tpaist,
	author = {Christopher James Smyth},
	date-modified = {2026-01-20 18:29:28 +0100},
	journal = {Ann. Inst. Fourier (Grenoble)},
	number = {3},
	pages = {1--28},
	title = {Totally positive algebraic integers of small trace},
	volume = {34},
	year = {1984}}

@book{Billingsley,
	author = {Billingsley, Patrick},
	date-modified = {2026-01-20 19:02:23 +0100},
	edition = {Second},
	mrclass = {60B10 (28A33 60F17)},
	mrnumber = {1700749},
	pages = {x+277},
	publisher = {John Wiley \& Sons, Inc., New York},
	series = {Wiley Series in Probability and Statistics: Probability and Statistics},
	title = {Convergence of probability measures},
	year = {1999},
	bdsk-url-1 = {https://doi.org/10.1002/9780470316962}}

@article{Bi97,
	author = {Bilu, Yuri},
	date-modified = {2026-01-20 18:21:52 +0100},
	fjournal = {Duke Mathematical Journal},
	issn = {0012-7094},
	journal = {Duke Math. J.},
	mrclass = {11G35 (11G25 11J68 14G05 14G25)},
	mrnumber = {1470340 (98m:11067)},
	mrreviewer = {Dan Abramovich},
	number = {3},
	pages = {465--476},
	title = {Limit distribution of small points on algebraic tori},
	volume = {89},
	year = {1997},
	bdsk-url-1 = {http://dx.doi.org/10.1215/S0012-7094-97-08921-3}}

@article{BLW,
	author = {Bloom, Thomas and Levenberg, Norman and Wielonsky, Frank},
	date-modified = {2026-01-20 18:24:52 +0100},
	fjournal = {Computational Methods and Function Theory},
	issn = {1617-9447,2195-3724},
	journal = {Comput. Methods Funct. Theory},
	mrclass = {60F10 (31B15)},
	mrnumber = {3428818},
	number = {4},
	pages = {555--594},
	title = {Logarithmic potential theory and large deviation},
	volume = {15},
	year = {2015},
	bdsk-url-1 = {https://doi.org/10.1007/s40315-015-0120-4}}

@book{BombieriGubler,
	author = {Bombieri, Enrico and Gubler, Walter},
	date-modified = {2026-01-20 18:19:31 +0100},
	isbn = {978-0-521-84615-8; 0-521-84615-3},
	mrclass = {11G50 (11-02 11G10 11G30 11J68 14G40)},
	mrnumber = {2216774},
	mrreviewer = {Yuri\ Bilu},
	pages = {xvi+652},
	publisher = {Cambridge University Press, Cambridge},
	series = {New Mathematical Monographs},
	title = {Heights in {D}iophantine geometry},
	volume = {4},
	year = {2006},
	bdsk-url-1 = {https://doi.org/10.1017/CBO9780511542879}}

@article{Lobrich,
	author = {L{\"o}brich, Steffen},
	date-modified = {2026-01-20 18:27:33 +0100},
	fjournal = {Journal de Th\'eorie des Nombres de Bordeaux},
	issn = {1246-7405,2118-8572},
	journal = {J. Th\'eor. Nombres Bordeaux},
	mrclass = {11G50 (14G40 14H52)},
	mrnumber = {3614528},
	mrreviewer = {Shu\ Kawaguchi},
	number = {1},
	pages = {289--305},
	title = {A gap in the spectrum of the {F}altings height},
	volume = {29},
	year = {2017},
	bdsk-url-1 = {https://doi.org/10.5802/jtnb.980}}

@article{OSLW,
	author = {{Orive Rodr{\'\i}guez}, Ram{\'o}n Angel and {S{\'a}nchez Lara}, Joaqu{\'\i}n Francisco and Wielonsky, Franck},
	date-added = {2025-11-11 16:10:40 +0100},
	date-modified = {2025-11-11 16:15:51 +0100},
	fjournal = {Journal of Approximation Theory},
	issn = {0021-9045,1096-0430},
	journal = {J. Approx. Theory},
	mrclass = {31C15 (47J40)},
	mrnumber = {3935952},
	mrreviewer = {B\'ela\ Nagy},
	pages = {71--100},
	title = {Equilibrium problems in weakly admissible external fields created by pointwise charges},
	volume = {244},
	year = {2019},
	bdsk-url-1 = {https://doi.org/10.1016/j.jat.2019.03.008}}

@incollection{Rumely,
	author = {Rumely, Robert},
	booktitle = {Spectral problems in geometry and arithmetic ({I}owa {C}ity, {IA}, 1997)},
	date-added = {2025-11-11 15:06:59 +0100},
	date-modified = {2025-11-11 15:07:09 +0100},
	isbn = {0-8218-0940-7},
	mrclass = {11G50 (14G40)},
	mrnumber = {1710794},
	mrreviewer = {Antoine\ Chambert-Loir},
	pages = {159--166},
	publisher = {Amer. Math. Soc., Providence, RI},
	series = {Contemp. Math.},
	title = {On {B}ilu's equidistribution theorem},
	volume = {237},
	year = {1999},
	bdsk-url-1 = {https://doi.org/10.1090/conm/237/1710794}}

@article{Serre19,
	author = {Serre, Jean-Pierre},
	date-modified = {2026-01-20 18:28:17 +0100},
	fjournal = {Ast\'erisque},
	isbn = {978-2-85629-915-9},
	issn = {0303-1179,2492-5926},
	journal = {Ast\'erisque},
	mrclass = {11K36 (11B05 11G10 11R06)},
	mrnumber = {4093205},
	mrreviewer = {Ben\ Joseph\ Green},
	note = {S\'eminaire Bourbaki. Vol. 2017/2018. Expos\'es 1136--1150},
	pages = {Exp. No. 1146, 379--425},
	title = {Distribution asymptotique des valeurs propres des endomorphismes de {F}robenius [d'apr\`es {A}bel, {C}hebyshev, {R}obinson,{$\ldots$}]},
	volume = {414},
	year = {2019},
	bdsk-url-1 = {https://doi.org/10.24033/ast.1090}}

@misc{BallaySombra,
	archiveprefix = {arXiv},
	author = {Balla\"{y}, Fran\c{c}ois and Mart\'in Sombra},
	date-modified = {2026-03-05 19:01:40 +0100},
	eprint = {2407.14978},
	primaryclass = {math.NT},
	title = {Approximation of adelic divisors and equidistribution of small points},
	year = {2025},
	bdsk-url-1 = {https://arxiv.org/abs/2407.14978}}

@article{BPSminima,
	author = {{Burgos Gil}, Jos\'e{} Ignacio and Philippon, Patrice and Sombra, Mart\'in},
	date-modified = {2026-01-20 18:20:33 +0100},
	fjournal = {Universit\'e{} de Grenoble. Annales de l'Institut Fourier},
	issn = {0373-0956,1777-5310},
	journal = {Ann. Inst. Fourier (Grenoble)},
	mrclass = {14G40 (11G35 11G50 14M25)},
	mrnumber = {3449209},
	mrreviewer = {Sho\ Tanimoto},
	number = {5},
	pages = {2145--2197},
	title = {Successive minima of toric height functions},
	volume = {65},
	year = {2015},
	bdsk-url-1 = {https://doi.org/10.5802/aif.2985}}

@article{BPRStoric,
	author = {{Burgos Gil}, Jos\'e{} Ignacio and Philippon, Patrice and {Rivera-Letelier}, Juan and Sombra, Mart\'in},
	date-modified = {2026-01-20 18:20:03 +0100},
	fjournal = {American Journal of Mathematics},
	issn = {0002-9327,1080-6377},
	journal = {Amer. J. Math.},
	mrclass = {11G50 (11G35 14G40 14M25)},
	mrnumber = {3928039},
	mrreviewer = {J\"org\ Jahnel},
	number = {2},
	pages = {309--381},
	title = {The distribution of {G}alois orbits of points of small height in toric varieties},
	volume = {141},
	year = {2019},
	bdsk-url-1 = {https://doi.org/10.1353/ajm.2019.0007}}

@article{BurgosGil_essMinFaltings,
	author = {{Burgos Gil}, Jos\'{e} Ignacio and Menares, Ricardo and {Rivera-Letelier}, Juan},
	date-added = {2025-11-08 22:11:11 +0100},
	date-modified = {2025-11-08 22:11:11 +0100},
	fjournal = {Mathematics of Computation},
	issn = {0025-5718,1088-6842},
	journal = {Math. Comp.},
	mrclass = {11F11 (11G50 14G40 37P30)},
	mrnumber = {3802441},
	mrreviewer = {Matthew\ A.\ Papanikolas},
	number = {313},
	pages = {2425--2459},
	title = {On the essential minimum of {F}altings' height},
	volume = {87},
	year = {2018}}

@article{Chinburg-Guignard-Soule,
	abstract = {In this paper we apply Arakelov theory to study the distribution of the Petersson norms of classical cusp forms as well as the distribution of the sup norms of rational functions on adelic subsets of curves. The method in both cases is to study the limiting distribution of the successive minima of norms of global sections of powers of a metrized ample line bundle as one takes increasing powers of the bundle. We develop a general method for computing the measure associated to this distribution. We also study measures associated to the zeros of sections which have small norm.},
	author = {Ted Chinburg and Quentin Guignard and Christophe Soul{\'e}},
	date-added = {2025-11-08 21:24:24 +0100},
	date-modified = {2025-11-08 21:26:34 +0100},
	issn = {0022-314X},
	journal = {J. Number Theory},
	keywords = {Cusp forms, Hermitian line bundle, Successive minima, Arakelov theory, Capacity theory},
	pages = {294-341},
	title = {On the slopes of the lattice of sections of hermitian line bundles},
	volume = {228},
	year = {2021},
	bdsk-url-1 = {https://www.sciencedirect.com/science/article/pii/S0022314X21001475},
	bdsk-url-2 = {https://doi.org/10.1016/j.jnt.2021.04.011}}

@article{Kuhne,
	author = {K{\"u}hne, Lars},
	fjournal = {Journal of the European Mathematical Society (JEMS)},
	issn = {1435-9855,1435-9863},
	journal = {J. Eur. Math. Soc. (JEMS)},
	mrclass = {14G40 (11G10 11G50 14G05 14K15)},
	mrnumber = {4404794},
	mrreviewer = {\'Eric\ Gaudron},
	number = {6},
	pages = {2077--2131},
	title = {Points of small height on semiabelian varieties},
	volume = {24},
	year = {2022},
	bdsk-url-1 = {https://doi.org/10.4171/jems/1125}}

@article{Autissier,
	abstract = {R{\'e}sum{\'e}
On donne un contre-exemple {\'e}l{\'e}mentaire {\`a} une conjecture de Pineiro, Szpiro et Tucker concernant l'{\'e}quidistribution de nombres alg{\'e}briques de petite hauteur. Pour citer cet article : P. Autissier, C. R. Acad. Sci. Paris, Ser. I 342 (2006).
We give an elementary counter-example for a conjecture of Pineiro, Szpiro and Tucker about the uniform distribution of algebraic numbers with small height. To cite this article: P. Autissier, C. R. Acad. Sci. Paris, Ser. I 342 (2006).},
	author = {Pascal Autissier},
	date-added = {2025-11-08 20:02:15 +0100},
	date-modified = {2025-11-08 20:02:25 +0100},
	issn = {1631-073X},
	journal = {C. R. Math.},
	number = {9},
	pages = {639-641},
	title = {Sur une question d'{\'e}quir{\'e}partition de nombres alg{\'e}briques},
	volume = {342},
	year = {2006},
	bdsk-url-1 = {https://www.sciencedirect.com/science/article/pii/S1631073X06000859},
	bdsk-url-2 = {https://doi.org/10.1016/j.crma.2006.02.021}}

@article{Smith-Orloski-Sardari,
	author = {Bryce Joseph Orloski and Naser Talebizadeh Sardari and Alexander Smith},
	date-added = {2025-11-08 11:18:25 +0100},
	date-modified = {2026-01-20 18:33:19 +0100},
	journal = {Math. Comput.},
	number = {354},
	pages = {2005--2040},
	timestamp = {Wed, 04 Jun 2025 15:11:15 +0200},
	title = {New lower bounds for the Schur-Siegel-Smyth trace problem},
	volume = {94},
	year = {2024},
	bdsk-url-1 = {https://doi.org/10.1090/mcom/4004}}

@article{YuanBig,
	author = {Yuan, Xinyi},
	fjournal = {Inventiones Mathematicae},
	issn = {0020-9910,1432-1297},
	journal = {Invent. Math.},
	mrclass = {14G40 (11G50 14C20)},
	mrnumber = {2425137},
	mrreviewer = {Antoine\ Chambert-Loir},
	number = {3},
	pages = {603--649},
	title = {Big line bundles over arithmetic varieties},
	volume = {173},
	year = {2008},
	bdsk-url-1 = {https://doi.org/10.1007/s00222-008-0127-9},
	bdsk-url-2 = {https://doi.org/10.1007/s00222-008-0127-9}}

@article{Qu-Yin,
	author = {Qu, Binggang and Yin, Hang},
	date-added = {2025-11-06 18:30:56 +0100},
	date-modified = {2025-11-06 18:31:10 +0100},
	fjournal = {Advances in Mathematics},
	issn = {0001-8708,1090-2082},
	journal = {Adv. Math.},
	mrclass = {14G40 (11G50)},
	mrnumber = {4803242},
	pages = {Paper No. 109961, 24},
	title = {Arithmetic {D}emailly approximation theorem},
	volume = {458},
	year = {2024},
	bdsk-url-1 = {https://doi.org/10.1016/j.aim.2024.109961}}

@article{Ballay,
	author = {Balla\"{y}, Fran\c{c}ois},
	date-added = {2025-11-06 18:25:40 +0100},
	date-modified = {2025-11-06 18:25:47 +0100},
	fjournal = {Compositio Mathematica},
	issn = {0010-437X,1570-5846},
	journal = {Compos. Math.},
	mrclass = {14G40 (11G50 11H06)},
	mrnumber = {4271920},
	mrreviewer = {Ariyan\ Javanpeykar},
	number = {6},
	pages = {1302--1339},
	title = {Successive minima and asymptotic slopes in {A}rakelov geometry},
	volume = {157},
	year = {2021},
	bdsk-url-1 = {https://doi.org/10.1112/S0010437X21007156}}

@book{Pollard,
	author = {Pollard, David},
	date-added = {2025-11-05 10:00:47 +0100},
	date-modified = {2025-11-05 10:01:53 +0100},
	isbn = {0-387-90990-7},
	mrclass = {60F05 (60B10)},
	mrnumber = {762984},
	mrreviewer = {R.\ M.\ Dudley},
	pages = {xiv+215},
	publisher = {Springer-Verlag, New York},
	series = {Springer Series in Statistics},
	title = {Convergence of stochastic processes},
	year = {1984},
	bdsk-url-1 = {https://doi.org/10.1007/978-1-4612-5254-2}}

@book{Hormander,
	author = {H{\"o}rmander, Lars},
	date-added = {2025-11-04 16:13:06 +0100},
	date-modified = {2025-11-04 16:54:54 +0100},
	isbn = {978-0-8176-4584-7; 0-8176-4584-5},
	mrclass = {26B25 (31C10 32Txx 35A27 52A40)},
	mrnumber = {2311920},
	note = {Reprint of the 1994 edition.},
	pages = {viii+414},
	publisher = {Birkh\"auser Boston, Inc., Boston, MA},
	series = {Modern Birkh\"auser Classics},
	title = {Notions of convexity},
	year = {2007}}

@article{Fekete,
	author = {Fekete, Michael},
	date-added = {2025-11-04 11:30:39 +0100},
	date-modified = {2025-11-04 12:30:15 +0100},
	fjournal = {Mathematische Zeitschrift},
	issn = {0025-5874,1432-1823},
	journal = {Math. Z.},
	mrclass = {99-04},
	mrnumber = {1544613},
	number = {1},
	pages = {228--249},
	title = {{{\"U}}ber die {V}erteilung der {W}urzeln bei gewissen algebraischen {G}leichungen mit ganzzahligen {K}oeffizienten},
	volume = {17},
	year = {1923}}

@article{Fekete-Szego,
	author = {Fekete, Michael and Szeg\H{o}, G{\'a}bor},
	date-added = {2025-11-04 11:28:33 +0100},
	date-modified = {2025-11-04 12:23:20 +0100},
	fjournal = {Mathematische Zeitschrift},
	issn = {0025-5874,1432-1823},
	journal = {Math. Z.},
	mrclass = {30.0X},
	mrnumber = {72941},
	mrreviewer = {M.\ Marden},
	pages = {158--172},
	title = {On algebraic equations with integral coefficients whose roots belong to a given point set},
	volume = {63},
	year = {1955}}

@book{Ransford,
	author = {Ransford, Thomas},
	date-added = {2025-11-04 10:48:17 +0100},
	date-modified = {2025-11-04 10:48:21 +0100},
	isbn = {0-521-46120-0; 0-521-46654-7},
	mrclass = {31-02},
	mrnumber = {1334766},
	mrreviewer = {D.\ H.\ Armitage},
	pages = {x+232},
	publisher = {Cambridge University Press, Cambridge},
	series = {London Mathematical Society Student Texts},
	title = {Potential theory in the complex plane},
	volume = {28},
	year = {1995}}

@misc{Orloski-Sardari,
	archiveprefix = {arXiv},
	author = {Bryce Joseph Orloski and Naser Talebizadeh Sardari},
	date-added = {2025-09-25 11:27:22 +0200},
	date-modified = {2025-11-04 12:27:42 +0100},
	eprint = {2302.02872},
	primaryclass = {math.NT},
	title = {Limiting distributions of conjugate algebraic integers},
	year = {2024},
	bdsk-url-1 = {https://arxiv.org/abs/2302.02872}}

@article{Smith,
	author = {Smith, Alexander},
	date-added = {2025-09-18 15:45:50 +0200},
	date-modified = {2025-09-18 15:46:10 +0200},
	fjournal = {Annals of Mathematics. Second Series},
	journal = {Ann. of Math.~(2)},
	mrclass = {11R06 (11G10 11G25)},
	mrnumber = {4768417},
	mrreviewer = {James\ McKee},
	number = {1},
	pages = {71--122},
	title = {Algebraic integers with conjugates in a prescribed distribution},
	volume = {200},
	year = {2024},
	bdsk-url-1 = {https://doi.org/10.4007/annals.2024.200.1.2}}

@article{Zagier01,
	author = {Zagier, Don},
	date-modified = {2026-01-20 18:25:58 +0100},
	fjournal = {Mathematics of Computation},
	journal = {Math. Comp.},
	number = {203},
	pages = {485--491},
	title = {Algebraic numbers close to both {$0$} and {$1$}},
	volume = {61},
	year = {1993},
	bdsk-url-1 = {https://doi-org.csic.idm.oclc.org/10.2307/2152970},
	bdsk-url-2 = {https://doi.org/10.2307/2152970}}

@article{Doche2,
	author = {Doche, Christophe},
	date-modified = {2026-01-20 19:03:45 +0100},
	fjournal = {Journal de Th\'eorie des Nombres de Bordeaux},
	journal = {J. Th\'eor. Nombres Bordeaux},
	number = {1},
	pages = {103--110},
	title = {Zhang-{Z}agier heights of perturbed polynomials},
	volume = {13},
	year = {2001},
	bdsk-url-1 = {https://doi-org.csic.idm.oclc.org/10.5802/jtnb.307},
	bdsk-url-2 = {https://doi.org/10.5802/jtnb.307}}

@article{Doche1,
	author = {Doche, Christophe},
	date-modified = {2026-01-20 18:25:21 +0100},
	fjournal = {Mathematics of Computation},
	journal = {Math. Comp.},
	number = {233},
	pages = {419--430},
	title = {On the spectrum of the {Z}hang-{Z}agier height},
	volume = {70},
	year = {2001},
	bdsk-url-1 = {https://doi-org.csic.idm.oclc.org/10.1090/S0025-5718-00-01183-2},
	bdsk-url-2 = {https://doi.org/10.1090/S0025-5718-00-01183-2}}
\bibliographystyle{alphaurl}

\end{document}